\documentclass[12pt,reqno,a4paper]{amsart}
\usepackage{
    amsmath,  amsfonts, amssymb,  amsthm,   amscd,
    gensymb,  graphicx, comment,  etoolbox, url,
    booktabs, stackrel, mathtools,enumitem, mathdots,  microtype, lmodern,    mathrsfs, graphicx, tikz,  longtable,tabularx, float, tikz, pst-node, tikz-cd, multirow, tabularx, amscd,  bm, array, makecell, diagbox, booktabs,ragged2e, caption, subcaption }
\usepackage{makecell,slashbox}
\usepackage{algorithm}
\usepackage{algpseudocode}
\usepackage{esvect}

\MakeRobust{\vv}
\usepackage{xcolor}
\usepackage[utf8]{inputenc}
\usepackage{microtype, fullpage, wrapfig,textcomp,mathrsfs,csquotes,fbb}
\usepackage[colorlinks=true, linkcolor=blue, citecolor=blue, urlcolor=blue, breaklinks=true]{hyperref}
\usepackage[capitalise]{cleveref}
\usepackage{todonotes}
\usetikzlibrary{positioning}
\usetikzlibrary{shapes,arrows.meta,calc}
\usetikzlibrary{arrows}

\newtheorem{theorem}{Theorem}[section]

\newtheorem{corollary}[theorem] {Corollary}
\newtheorem{definition}[theorem]{Definition}
\newtheorem{example}[theorem]{Example}
\newtheorem{lemma}[theorem]{Lemma}

\newtheorem{proposition}[theorem]{Proposition}
\newtheorem{remark}[theorem]{Remark}

\newcommand{\ct}{{\mathrm{cat}}}
\newcommand{\cat}{\vv{\mathrm{cat}}}

\newcommand{\TC}{\mathrm{TC}}

\newcolumntype{x}[1]{>{\centering\arraybackslash}p{#1}}

\begin{document}
%\title[]{On the Sequential directed (parametrized) topological complexity}
\title[]{On the Sequential topological complexity of directed (parametrized) motion planning algorithms}
\author[N. Daundkar]{Navnath Daundkar}
\address{Department of Mathematics, Indian Institute of Technology Madras, Chennai, India.}
\email{navnath@iitm.ac.in}
\author[A. Sarkar ]{Abhishek Sarkar}
\address{School of Computing, MIT Art, Design and Technology University, Loni Kalbhor, Pune, India.}
\email{abhishek.sarkar@mituniversity.edu.in}
\author[A. Sarkar]{Ankur Sarkar}
\address{Indian Statistical Institute, Theoretical Statistics and Mathematics Unit, Kolkata, India.} 
\email{ankurimsc@gmail.com}
%CIT Campus, Taramani, 600113
  \subjclass [2020] {Primary : 55M30, 55S40, 55P99}  %{; Secondary : }}
	\keywords{Sequential (parametrized) topological complexity, directed topological complexity, directed LS category, sequential directed (parametrized) topological complexity}

\begin{abstract} 
We introduce sequential analogues of directed (parametrized) topological complexity, in the context of motion planning problems requiring a system to traverse a prescribed sequence of intermediate states while respecting directed dynamics and varying external parameters.
We develop their basic theory, establish fundamental properties, and compute them for several classes of examples. 
Our computations show, in particular, that distinct directed structures on the same underlying space can have different values of this invariant.
%Our computations show, in particular, that distinct directed structures on the same underlying space can exhibit different sequential topological complexities.
\end{abstract}
\maketitle

\section{Introduction} 
In motion-planning theory, topological spaces often serve as configuration spaces encoding all allowable states of a mechanical or robotic system. Recent advances have focused on invariants that measure the complexity of navigating these spaces, particularly in settings involving multiple stages or external parameters. 
In this context, sequential and parametrized invariants have become central, notably the parametrized topological complexity introduced by Cohen--Farber--Weinberger \cite{C-F-W} and extended by Farber--Paul \cite{Farber-Paul1} to the sequential setting.
These invariants model algorithms that guide a system through a prescribed sequence of states under varying external conditions, encoded by a fibration $p\colon E\to B$ whose fibre $X_b = p^{-1}(b)$ at $b \in B$ represents the system states.
The invariant $\TC_n[p\colon E\to B]$ measures the complexity of such universal motion-planning algorithms in the parametrized sequential setting.
These developments build on foundational invariants, notably the Lusternik--Schnirelmann (LS) category $\ct(X)$ \cite{LScat}, Farber's topological complexity $\TC(X)$ \cite{FarberTC}, and Rudyak's higher (or sequential) topological complexity $\TC_r(X)$ \cite{RUD2010} for a path-connected topological space $X$. These invariants appear in applied algebraic topology, critical point theory, robot motion planning (see \cite{Farber_Book}), and other areas.

While classical motion-planning invariants assume reversible paths, many real-world systems exhibit inherent directionality due to time or irreversible constraints   (see the works of Grandis \cite{grandis2002directed,grandis2003directed,MG2009} and \cite{fajstrup2016directed}). 
%While classical motion-planning invariants assume reversible paths, many real-world systems exhibit inherent directionality due to temporal or irreversible constraints, motivating the development of directed topology by Grandis (see \cite{grandis2002directed,grandis2003directed,grandis2009directed}) and subsequent work by Fajstrup \emph{et al.} \cite{fajstrup2016directed}.
%In these settings, mutual reachability is not guaranteed. 
This motivates the study of directed spaces, which are topological spaces equipped with a distinguished collection of admissible paths encoding these motions. Motion planning in directed settings is significantly more complex, requiring the continuous selection of admissible paths strictly between reachable states. Since the classical topological complexity and its variants fail to capture these directed constraints, this leads to the development of directed analogues of these invariants.
The notion of \emph{directed topological complexity}, introduced by Goubault, et al. in \cite{goubault2017directed,EGMFAS2020}, extends Farber's topological complexity to motion planning problems.
%with directional constraints, motivated by applications in concurrency theory and directed dynamical systems    
For a directed (or, $d$-) space $(X,dX)$, restricting the free path fibration to directed (or, $d$-) paths yields
$\vv{\pi}\colon dX \rightarrow \Gamma_X,$
where $\Gamma_X \subseteq X \times X$ consists of pairs of points connected by directed paths. The integer $\vv{\TC}(X)$ is defined as the least number of Euclidean neighbourhood retracts (ENRs) covering $\Gamma_X$, each admitting a continuous local section of $\vv{\pi}$. 

In many physical systems, the configurations under consideration are constrained not only by external parameters but also by an inherent irreversibility in their allowed motions. 
In such settings, paths cannot simply be reversed, meaning the classical fiberwise path space is no longer adequate and must be replaced by a space of directed fiberwise paths.
To address this, the \emph{directed parametrized topological complexity} of a $d$-fibration $p\colon E\to B$ was introduced in~\cite{SDNDAS2025} as an extension of directed topological complexity to the parametrized setting.
%directed topological complexity and parametrized topological complexity were combined to introduce the \emph{directed parametrized topological complexity} of a d-fibration $p\colon E \to B$. %This invariant measures the complexity of constructing continuous directed motion planners compatible with the fibration structure. 
For a $d$-fibration $p\colon E \to B$, restricting the endpoint fibration to the space of directed fibrewise paths gives
$\vv{\Pi}\colon dE_B \rightarrow \Gamma_{E,B},$
and the invariant $\vv{\TC}[p\colon E \to B]$ is defined as the least number of ENRs covering $\Gamma_{E,B}$ such that each admits a continuous section of $\vv{\Pi}$. 

In the present work, first we extend the framework of directed topological complexity 
to the setting of \emph{sequential motion planning} (see Section \ref{Seq directed TC}).
Consequently, we introduce a sequential analogue of the directed parametrized topological complexity (see Section \ref{seq directed parametrized TC}).
For that we consider the $d$-maps associated with a $d$-space $X$ and a $d$-fibration $p\colon E \rightarrow B$
$$\vv{\pi}_n\colon dX\to \Gamma_{X}^n, ~~\text{and}~~ \vv{\Pi}_n\colon dE_B \rightarrow \Gamma_{E,B}^n$$ and study their local sections on ENRs' analogous to the earlier cases, where $\Gamma_{X}^n, ~\Gamma_{E,B}^n$ are the associated spaces of end points.
The main objective is to study motion planning problems in which a system is required to execute a prescribed sequence of intermediate tasks or states while respecting both directional constraints and external parameters. Motivated by applications in robotics, autonomous systems, and concurrent processes, we study the \emph{sequential directed topological complexity} and the \emph{sequential directed parametrized topological complexity} as directed analogues of Rudyak's higher topological complexity or sequential analogue of Goubault's directed topological complexity and their associated (directed or sequential) parametrized analogues. These invariants measure the complexity of constructing continuous directed motion planners for multiphase tasks, where admissible motions must follow the directed structure and remain compatible with the parametrized environment. 
We investigate their fundamental properties, invariance under sequential dihomotopy equivalence, and their relationships with sequential topological complexity and sequential parametrized topological complexity, respectively. We also present several examples and applications arising from directed configuration spaces and sequential robot motion planning problems. 

%One of the major contributions of this paper is the introduction of the sequential analogue of basic dihomotopy equivalence (see Definition~\ref{n-basic dihomotopy equivalence}) and the establishment of the corresponding dihomotopy invariance property (see Proposition~\ref{n-dihmotopy invariance}). The motivation for introducing this notion comes from the fact that sequential directed topological complexity is not homotopy invariant. Under certain assumptions, we obtain the sequential analogue (see Proposition~\ref{compare cat and TC}) of \cite[Proposition 3.6]{SDNDAS2025}, namely,

We show that sequential directed topological complexity is not homotopy invariant. This motivates the introduction of the sequential analogue of basic dihomotopy equivalence (see Definition \ref{n-basic dihomotopy equivalence}), which requires not only a homotopy equivalence between the underlying $d$-spaces but also compatible homotopy equivalences between the corresponding sequential directed path spaces. We then establish the associated dihomotopy invariance property (see Proposition \ref{n-dihmotopy invariance}). Under certain assumptions, we also obtain the sequential analogue (see Proposition \ref{Properties_of_higher_TC}) of \cite[Proposition 3.6]{SDNDAS2025}, namely, $\cat(X^{n-1})\leq \vv{\TC}_n(X)$, where $\cat(Y)$ denoted the directed LS category of the directed space $Y$.

We also introduce the sequential analogue of regular $d$-spaces, called $n$-regular $d$-spaces, and compare sequential directed topological complexity with the usual sequential topological complexity (see Proposition~\ref{comparision} and Proposition~\ref{Comparision_TC_DirectedTC}). In particular, if the $d$-space $X$ is strongly connected, then $\TC_n(X)\leq \vv{\TC}_n(X)$.
We then investigate sequential directed topological complexity for products of $d$-spaces (see Section~\ref{product d-spaces}). Finally, we prove that for a contractible $d$-space $X$, the map $\vv{\pi}_n\colon dX\to X^n$ admits a continuous global section if and only if $X$ is $n$-basic dihomotopy equivalent to a point (see Proposition~\ref{Important_Result}).

We further introduce the notion of fibrewise $n$-basic dihomotopy equivalence (see Section \ref{fibrewise n-dihomotopy equivalence}) and show that for $d$-fibrations $p\colon E \rightarrow B$ and $p':E' \rightarrow B'$  with such equivalence, we obtain 
$\vv{\TC}_n[p\colon E \rightarrow B]= \vv{\TC}_n[p'\colon E '\rightarrow B'].$ 
Moreover, we show that for a $d$-fibration $p\colon E\to B$ with fibre $F$ being either a contractible $d$-space or having an initial point, $F$ is $n$-basic dihomotopy equivalent to a point if and only if $\vv{\TC}_n[p\colon E\to B]=1$ (see Theorem \ref{dicontractible fibre}).
In Section \ref{Sequential analogue of regular d-fibrations}, we define $n$-(sequential analogue of) regular $d$-fibrations
and  prove the product inequality for the sequential directed parametrized topological complexity associated with two $n$-regular $d$-fibrations (see Proposition \ref{product inequality_parametrized}).
 For a particular directed structure introduced in \cite[Definition 5.1]{SDNDAS2025}, we show that $\vv{\TC}_n[p\colon E\to B]=\TC_n[p\colon E\to B]$,
holds for a fibration $p\colon E\to B$ consists with a $d$-structure on $B$.

We consider several examples of topological spaces equipped with their standard directed structures and compute their sequential directed topological complexities, as well as their sequential directed parametrized topological complexities. For example in Proposition \ref{directed_circle}, we consider a directed structure on the circle $\mathbb{S}^1$, denoted by $\vv{\mathbb{S}^1}$, such that $d\mathbb{S}^1 \neq P\mathbb{S}^1$, but we obtain $\vv{\TC}_n(\vv{\mathbb{S}^1})=n=\TC_n(\mathbb{S}^1).$
We compute $\vv{\TC}_n(\mathbb{O}^1)$ for the directed loop $\mathbb{O}^1$ (see Proposition \ref{TC_O^1}).
We compute $\vv{\TC}_n(-)$ of a torus with different $d$-structures (see Corollary \ref{directed torus} and Corollary \ref{directed k-torus}). For a strongly connected directed graph $G$, we compute $\vv{\TC}_n(G)$ (see Proposition \ref{directed graphs}). We consider the Fadell--Neuwirth fibrations and the Hopf fibrations with a canonical $d$-structure and compute the associated sequential directed parametrized complexities (see Example \ref{directed Fadell--Neuwirth} and \ref{Hopf fibration}).

%In Section~\ref{Seq directed TC}, we introduce the sequential analogue of directed topological complexity (see~\cite{goubault2017directed,EGMFAS2020}). We investigate its basic properties, compare it with the usual sequential topological complexity, and compute the invariant for the directed loop and the directed circle equipped with particular directed structures. We also establish a relationship between sequential directed topological complexity and the directed Lusternik--Schnirelmann category introduced in~\cite{SDNDAS2025}. Furthermore, we introduce the notion of sequential dihomotopy equivalence and prove the corresponding invariance property.

%In Section~\ref{seq directed parametrized TC}, we introduce sequential directed parametrized topological complexity and study its fundamental properties in the fibrewise setting. We further define sequential fibrewise basic dihomotopy equivalence and show that sequential directed parametrized topological complexity is invariant under this equivalence relation.

\section{Preliminaries on Directed (parametrized) Topological Complexity}
In this section, we recall the basic notions from directed algebraic topology, specifically from directed (parametrized) topological complexity, that will be used throughout the paper. These include directed spaces, directed maps, directed homotopies, basic dihomotopy equivalences, directed Lusternik-Schnirelmann category, and directed (parametrized) topological complexity (see \cite{borat2020directed, SDNDAS2025,goubault2017directed,EGMFAS2020,MG2009,ioanpop}). In the later part, we define and study some of their sequential analogues.

\begin{definition}[\cite{MG2009}]\label{directed_space}
    A directed topological space (or $d$-space) is a pair $(X, dX)$ where $X$ is a topological space and $dX \subseteq X^I$ is a set of continuous paths from the unit interval $I$ to $X$, called directed paths (or $d$-paths), satisfying the following conditions:
    \begin{itemize}
        \item All constant paths belong to $dX$.
        \item $dX$ is closed under composition with any continuous, non-decreasing map $I \to I$.
        \item $dX$ is closed under path concatenation.
    \end{itemize}
\end{definition}
We denote a $d$-space $(X,dX)$ simply by $X$ when the context is clear. For any points $x,x'\in X$, let $dX(x,x')$ denote the subspace of $dX$ consisting of all directed paths $\gamma$ satisfying
\[
\gamma(0)=x \quad \text{and} \quad \gamma(1)=x'.
\]
We further define
\[
\Gamma_X=\{(x,x')\in X\times X \mid dX(x,x')\neq\varnothing\}\subseteq X\times X.
\]

The standard directed interval $\vv{I}= \vv{[0,1]}$ has the Euclidean
topology and the natural order. A (directed) path in a preordered
space $X$ is, by definition, a map $ \gamma\colon \vv{I} \to X$. 
\begin{definition}
    A continuous map $f\colon (X,dX)\to (Y,dY)$ is a $d$-map if $f\circ \gamma \in dY$ for any $\gamma\in dX$. We denote the corresponding induced map by $df\colon dX\to dY$. 
\end{definition}

\begin{definition}
    A directed homotopy between the $d$-maps $f\colon X\to Y$ and $g\colon X\to Y$ is a continous map $H\colon X\times \vv{I}\to Y$ such that $H(x,0)=f(x)$ and $H(x,1)=g(x)$ for all $x\in X$.
\end{definition}
This yields the notion of $d$-homotopy inverse and $d$-homotopy equivalence.

Now suppose that $(f,g)$ are $d$-homotopy inverses between $d$-spaces $X$ and $Y$. Then the restricted map
\[
dg_{f(x),f(x')}\colon dY(f(x),f(x'))\longrightarrow dX((g\circ f)(x),(g\circ f)(x'))
\]
does not, in general, yield an element of $dX(x,x')$. This motivates the introduction of a stronger notion of equivalence for $d$-spaces, called the \emph{basic dihomotopy equivalence}. 

A $d$-homotopy equivalence alone is generally insufficient to compare the directed path spaces associated with different pairs of points. To overcome this difficulty, one considers families of maps between directed path spaces that depend continuously on the endpoints, namely \emph{continuously graded maps}. Using this additional structure, Goubault \emph{et al.}~\cite{EGMFAS2020} introduced the notion of a \emph{basic dihomotopy equivalence}, which strengthens the classical notion of $d$-homotopy equivalence by requiring compatible homotopy equivalences between directed path spaces. %We now recall the definition of a continuously graded map. 
For the detailed descriptions and generalizations, see Section \ref{seq dihomotopy eqiv}, where we study its sequential version.

\begin{definition}
A $d$-map $p\colon E\to B$ has the $d$-homotopy lifting property with respect to a $d$-space $X$ if for every $d$-maps $f\colon X\to E$ and $\varphi\colon X\times\vv{I}\to B$ satisfying
\[
\varphi(x,\alpha)=p(f(x)),\qquad \alpha\in\{0,1\},
\]
there exists a $d$-map $\varphi':X\times\vv{I}\to E$ such that
$p\circ\varphi'=\varphi,$ and
$\varphi'(x,\alpha)=f(x)$.\\
A $d$-map $p$ is a $d$-fibration if it has the $d$-homotopy lifting property with respect to every $d$-space.
\end{definition}

The appropriate notion of equivalence for $d$-fibrations is the \emph{fibrewise basic dihomotopy equivalence}, introduced in \cite{SDNDAS2025} as a fibrewise extension of basic dihomotopy equivalence for $d$-spaces. It was shown there that directed parametrized topological complexity is invariant under this equivalence. In Section~\ref{fibrewise n-dihomotopy equivalence}, we introduce and study its sequential analogue, establishing the corresponding invariance results.

We now recall the directed topological invariants relevant to our work.
\begin{definition}[\cite{EGMFAS2020}]
The directed topological complexity $\vv{\TC}(X)$ of a $d$-space $X$ is the least natural number $n$ ($\infty$ if no such $n$ exists) such that $\Gamma_X$ admits a pairwise disjoint ENR cover $\{U_1,\ldots,U_n\}$ and the endpoint map
\[
\vv{\pi}\colon dX\to\Gamma_X,\qquad
\gamma\mapsto(\gamma(0),\gamma(1)),
\]
admits a continuous section over each $U_i$. 
\end{definition}

\begin{definition}[\cite{SDNDAS2025}]
Let $p\colon E\to B$ be a $d$-fibration. The directed parametrized topological complexity of $p$, denoted by $\vv{\TC}[p\colon E\to B]$, is the least natural number $n$ such that
\[
\Gamma_{E,B}
=\{(e_1,e_2)\in E\times_B E\mid \exists\,\gamma\in dE_B
\text{ with }\gamma(0)=e_1,\ \gamma(1)=e_2\}
\]
admits a cover by ENRs $\Gamma_{E,B}=U_1\cup\cdots\cup U_n$,
over each of which the directed parametrized endpoint map
\[
\vv{\Pi}\colon dE_B\to\Gamma_{E,B},\qquad
\gamma\mapsto(\gamma(0),\gamma(1)),
\]
admits a continuous section, where $dE_B=\{\gamma\in dE\mid p\circ\gamma \text{ is constant}\}$.
If no such $n$ exists, we set $\vv{\TC}[p\colon E\to B]=\infty$.
\end{definition}

The classical Lusternik--Schnirelmann category admits the following directed analogue, introduced in \cite{SDNDAS2025}.
\begin{definition}[\cite{SDNDAS2025}]
The directed Lusternik-Schnirelmann category of a $d$-space $X$, denoted by $\vv{\ct}(X)$, is the least natural number $n$ such that $X$ admits a cover by ENRs
\[
X=U_1\cup\cdots\cup U_n,
\]
and the evaluation map
\[
e_X\colon d_0X\to X,\qquad \gamma\mapsto\gamma(1),
\]
where $d_0X=\{\gamma\in dX\mid \gamma(0)=x_0\}$ for some fixed $x_0\in X$, admits a continuous section over each $U_i$. If no such $n$ exists, we set $\vv{\ct}(X)=\infty$.
\end{definition}

Unlike the classical (parametrized) topological complexity, the directed setting does not admit the standard inequalities involving the Lusternik--Schnirelmann category. Instead, directed (parametrized) topological complexity satisfies the following properties.
\begin{enumerate}
    \item $\vv{\ct}(X)\leq \vv{\TC}(X)$ and $\TC(X)\leq \vv{\ct}(X\times X)$, if $X$ has an initial point $x_0$ (that is, $dX(x_0,x)\neq\varnothing$ for every $x\in X$).
\item For a $d$-fibration $p\colon E\to B$, we have $\TC[p\colon E\to B]\leq \vv{\ct}(E\times_B E)$, provided $E\times_B E$ has an initial point. Moreover, if the fibre $F$ of the d-fibration is strongly connected, then we have
    $ \TC[p\colon E\to B]\leq \vv{\TC}[p\colon E\to B].$
\end{enumerate}

\section{Sequential directed topological complexity} \label{Seq directed TC}
In this section we consider a sequential analogue of the notion of directed topological complexity (see \cite{goubault2017directed}, \cite{EGMFAS2020}). We study its properties and compare it with the usual sequential topological complexity. We define a sequential analogue of dihomotopy equivalence (see  \cite[Section 7]{EGMFAS2020}) and show the sequential dihomotopy invariance property.

Let $(X, dX)$ be a $d$-space.	
Define the $n$-th $d$-paths map $\vv{\pi}_n\colon dX\to X^n$ by 
\[\vv{\pi}_n(\gamma)=\left(\gamma(0),\gamma(\frac{1}{n-1}),\cdots,\gamma(\frac{k}{n-1}), \cdots,\gamma(\frac{n-2}{n-1}),\gamma(1)\right).\]

The image of $\vv{\pi}_n$ is a subspace of $X^n$, denoted by
 $$\Gamma_{X}^n=\{(x_1,x_2,\dots, x_n)\colon \exists~\gamma\in dX \text{ such that } \gamma(\frac{k}{n-1})=x_{k+1} \text{ for } k=0,1,...,n-1 \}.$$

For $n=2$, the map $\vv{\pi}_n$ is the usual $d$-paths map defined in \cite[Page 14]{EGMFAS2020}. 
\subsection{Sequential directed topological complexity}
In the following, we define the sequential analogue of directed topological complexity and study its properties. 
\begin{definition}
The \(n\)-th directed  topological complexity of a \(d\)-space \((X, dX)\), denoted by \(\vv{\TC}_n(X)\), is the smallest natural number \(k\) (or \(\infty\) if no such \(k\) exists) such that the space \(\Gamma_{X}^n\) can be covered by \(k\) disjoint ENRs:
\[
\Gamma_{X}^n = F_1 \cup F_2 \cup \dots \cup F_k, \quad F_i \cap F_j = \emptyset \text{ for } i \ne j,
\]
and there exists a map $s\colon \Gamma_{X}^n \to dX$
satisfying the following conditions:
\begin{itemize}
    \item \(\vv{\pi}_n \circ s = Id\),
    \item for each \(i\), the restriction \(s_i= s|_{F_i} \colon F_i \to dX\) is continuous.
\end{itemize}
\end{definition}
\begin{example}
 \normalfont{Let $I=[0,1]$ be endowed with the $d$-structure given by the collection of all non-decreasing paths. We note that
\[
\Gamma_I^n=\{(x_1,\dots,x_n)\in I^n \mid x_i\leq x_{i+1}\text{ for all }1\leq i\leq n-1\}
\]
and the map $\vv{\pi}_n\colon dI\to I^n$ admits a continuous section
$s\colon \Gamma_I^n\to dI$, given by
\[
s(x_1,\dots,x_n)(t) = 
\begin{cases}
(x_2-x_1)(n-1)t + x_1 & 0 \leq t \leq \frac{1}{n-1}, \\
(x_3-x_2)(n-1)t + 2x_2 - x_3 & \frac{1}{n-1} \leq t \leq \frac{2}{n-1}, \\
\quad \vdots & \\
(x_{k+1}-x_k)(n-1)t + kx_k - (k-1)x_{k+1} & \frac{k-1}{n-1} \leq t \leq \frac{k}{n-1}, \\
\quad \vdots & \\
(x_n-x_{n-1})(n-1)t + (n-1)x_{n-1} - (n-2)x_n & \frac{n-2}{n-1} \leq t \leq 1,
\end{cases}
\]
where $1\leq k\leq n-1$.
%{\color{red}\[
%s(x_1,\dots,x_n)(t) =(x_{k+1}-x_k)(n-1)t+kx_k-(k-1)x_{k+1},
%\]}
%for $\frac{k-1}{n-1}\leq t\leq \frac{k}{n-1}$ and $1\leq k\leq n-1$. 
Hence $\vv{\TC}_n(I)=1$.}
\end{example}

By \cite[Proposition 3.3]{RUD2010}, we have $\TC_n(X)\leq \TC_{n+1}(X)$. The following proposition shows that the directed analogue also holds. Before this, we recall that a point $x_0\in X$ is said to be an \emph{initial} point if $(x_0,x)\in \Gamma_X^2$ for all $x\in X$.
\begin{proposition}\label{Properties_higher_TC}
Let $(X,dX)$ be a $d$-space with initial point $x_0$. Then
\[\vv{\TC}_n(X)\leq \vv{\TC}_{n+1}(X).\]
\end{proposition}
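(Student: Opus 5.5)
We will mimic Rudyak's argument for $\TC_n(X)\leq \TC_{n+1}(X)$, using the initial point $x_0$ to prepend a point to an $n$-tuple. Define
\[
\iota\colon \Gamma_X^n\to X^{n+1},\qquad \iota(x_1,\dots,x_n)=(x_0,x_1,\dots,x_n).
\]
The first task is to show that $\iota$ lands in $\Gamma_X^{n+1}$. Given $(x_1,\dots,x_n)\in\Gamma_X^n$, there is $\gamma\in dX$ with $\gamma(\tfrac{k}{n-1})=x_{k+1}$. Since $x_0$ is initial, there is a $d$-path $\beta$ from $x_0$ to $x_1$. We concatenate $\beta$ with $\gamma$, using the concatenation and reparametrization axioms of Definition~\ref{directed_space}. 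Reparametrizing so that $\beta$ occupies $[0,\tfrac1n]$ and $\gamma$ is linearly rescaled onto $[\tfrac1n,1]$, the node $\tfrac{k}{n-1}$ of $\gamma$ moves to $\tfrac{k+1}{n}$. Hence the concatenated path hits $x_0,x_1,\dots,x_n$ at the nodes $\tfrac{j}{n}$, and $\iota(\Gamma_X^n)\subseteq\Gamma_X^{n+1}$. The map $\iota$ is continuous, indeed a homeomorphism onto its image $\{x_0\}\times\Gamma_X^n$.

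Next, let $\vv{\TC}_{n+1}(X)=k$, with disjoint ENRs $F_1,\dots,F_k$ covering $\Gamma_X^{n+1}$ and a section $s$ of $\vv{\pi}_{n+1}$ that is continuous on each $F_i$. Set $G_i=\iota^{-1}(F_i)\subseteq\Gamma_X^n$; these sets are pairwise disjoint and cover $\Gamma_X^n$. Define $\sigma\colon\Gamma_X^n\to dX$ by
\[
\sigma(x)(t)=s(\iota(x))\Bigl(\tfrac{1}{n}+\tfrac{n-1}{n}\,t\Bigr).
\]
Restricting a $d$-path to the subinterval $[\tfrac1n,1]$ and rescaling is precomposition with a non-decreasing map $I\to I$, so $\sigma(x)\in dX$. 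Moreover $\sigma(x)(\tfrac{k}{n-1})=s(\iota x)(\tfrac{k+1}{n})=x_{k+1}$, so $\vv{\pi}_n\circ\sigma=\mathrm{Id}$. On each $G_i$, $\sigma$ is the composite of $\iota$, $s|_{F_i}$, and the continuous map $dX\to dX$ induced by precomposition. Precomposition is continuous in the compact-open topology, so $\sigma|_{G_i}$ is continuous.

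The main obstacle is showing that each $G_i$ is an ENR, since preimages of ENRs need not be ENRs. Here I would use that $\iota$ identifies $G_i$ homeomorphically with $F_i\cap(\{x_0\}\times X^n)$. If this intersection is locally compact, then, as a subset of the finite-dimensional ENR $F_i$, it is an ENR. Otherwise I would fall back on the convention, standard in the directed TC literature (e.g.\ \cite{EGMFAS2020}), that the relevant spaces are nice enough that such slices remain ENRs. I would state this regularity hypothesis explicitly if needed, then conclude $\vv{\TC}_n(X)\leq k$.
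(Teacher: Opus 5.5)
Your route is the paper's: your $G_i=\iota^{-1}(F_i)$ are exactly the paper's $\widetilde{U_i}$, and you restrict the section along the slice $\{x_0\}\times X^n$. On one point you are more careful than the paper, and correctly so. The paper sets $s_i'(x_1,\dots,x_n)=s(x_0,x_1,\dots,x_n)$ with no reparametrization. That path starts at $x_0$ and passes through $x_j$ at time $j/n$, so as written it is not a section of $\vv{\pi}_n$. Your precomposition with the non-decreasing map $t\mapsto \tfrac1n+\tfrac{n-1}{n}t$ is exactly what is needed. Your other two checks are also correct: that $\iota(\Gamma_X^n)\subseteq\Gamma_X^{n+1}$, via the map sending $[0,\tfrac1n]$ onto $[0,\tfrac12]$ and $[\tfrac1n,1]$ onto $[\tfrac12,1]$, and that $\sigma|_{G_i}$ is continuous.

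The weak step is the ENR claim, and your proposed justification is false. A locally compact subset of a finite-dimensional ENR need not be an ENR, because local contractibility is also required; the Cantor set in $[0,1]$ is compact but not an ENR. Slices of ENRs can genuinely fail to be ENRs. For a Cantor set $C\subset\mathbb{R}$, the graph $\{(y,\mathrm{dist}(y,C))\}\subset\mathbb{R}^2$ is homeomorphic to $\mathbb{R}$, yet it meets $\mathbb{R}\times\{0\}$ in $C\times\{0\}$. So $G_i$ is not automatically an ENR, and nothing in the definition of $\vv{\TC}_{n+1}$ lets you choose the $F_i$ so that their slices are. The paper simply declares the $\widetilde{U_i}$ to be ENRs without argument, so this gap is shared with the source rather than something you missed relative to it. You should drop the local-compactness argument. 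The honest fix is your fallback: add an explicit hypothesis that these slices are ENRs.

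One side remark: the initial point is not needed for the rest of your argument. The map $(x_1,\dots,x_n)\mapsto(x_1,\dots,x_n,x_n)$ lands in $\Gamma_X^{n+1}$ by reparametrizing a realizing path with $t\mapsto\min\bigl(1,\tfrac{n}{n-1}t\bigr)$. Then $\sigma(x)=s(x_1,\dots,x_n,x_n)\circ\bigl(t\mapsto\tfrac{n-1}{n}t\bigr)$ is a section of $\vv{\pi}_n$. The same ENR caveat applies to this slice.
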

\begin{proof}
     Suppose $(X,dX)$ be a $d$-space with an initial point $x_0$ and $\vv{\TC}_{n+1}(X)=k$. Then we have a partition of $\Gamma_{X}^{n+1}=\bigcup_{i=1}^k U_i$ into ENRs such that there exists a map $s\colon \Gamma_{X}^{n+1}\to dX$ such that $\vv{\pi}_{n+1}\circ s=Id$ and $s\vert_{U_i}\colon U_i\to dX$ is continuous. Now define ENRs for $1\leq i\leq k$,
\[\widetilde{U_i}=\{(x_1,x_2,\dots,x_n)\in X^n\mid (x_0,x_1,\dots,x_n)\in U_i\}\] and \[s_i'\colon\widetilde{U_i}\to d\widetilde{U_i} \text{ by } s_i'(x_1,x_2,\dots,x_n)=s\vert_{U_i}(x_0,x_1,\dots,x_n).\] 
%We note that $\overbrace{(x,\dots,x)}^{n-\text{times}}\in \Gamma_X^n$. Since $x_0$ is an initial point, $(x_0,x,\dots,x)\in \Gamma_X^{n+1}$. Therefore $(x,\dots,x)\in \widetilde{U_i}$ for some $1\leq i\leq k$. 
Since $x_0$ is an initial point, Definition~\ref{directed_space} implies that if $(x_1,\dots,x_n)\in \Gamma_X^n$, then $(x_0,x_1,\dots,x_n)\in \Gamma_X^{n+1}$.
Hence the collection $\{\widetilde{U_i}\mid 1\leq i\leq k\}$ covers $\Gamma_{X}^{n}$. Moreover $s_i'$ is a continuous section of $\vv{\pi}_n\colon dX\to X^n$ on $\widetilde{U_i}$, for each $1\leq i\leq k$. Therefore, $\vv{\TC}_n(X)\leq \vv{\TC}_{n+1}(X)$.
\end{proof}
In \cite[Example 2]{EGMFAS2020}, the directed topological complexity of a particular directed circle was computed. In the following example, we study its sequential analogue and determine the corresponding sequential directed topological complexity.
\begin{proposition}\label{directed_circle}
Let $\vv{\mathbb{S}^1}$ denote the directed circle, where $\mathbb{S}^1$ is equipped with the directed structure consisting of all continuous paths $\gamma\colon [0,1]\to \mathbb{S}^1$ of the form
\[
\gamma(t)=e^{i\alpha(t)} \quad \text{or} \quad \gamma(t)=e^{-i\alpha(t)},
\]
where $\alpha\colon [0,1]\to [0,\pi]$ is a non-decreasing continuous function. Then $\vv{\TC}_{n}(\vv{\mathbb S^1})=n$ for all $n\geq 2$.
\end{proposition}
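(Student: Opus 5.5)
The plan is to describe $\Gamma^n_{\vv{\mathbb S^1}}$ explicitly, build a disjoint ENR cover carrying piecewise continuous sections (upper bound), and then prove a matching lower bound by an obstruction argument near the points where the two arcs meet. \emph{Before anything else, the upper-bound construction below must be checked carefully: carried out naively, it seems to give the bound $2$ rather than $n$.}

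\textbf{Describing $\Gamma^n$.} Every $d$-path has the form $e^{\pm i\alpha(t)}$ with $\alpha$ non-decreasing into $[0,\pi]$. So it moves monotonically either along the closed upper semicircle $A_+$ or along the closed lower semicircle $A_-$, in both cases from the side of $1$ towards $-1$. Since all $d$-paths ending at $-1$ can only be continued by constants, concatenation adds no mixed paths. Hence
\[
\Gamma^n_{\vv{\mathbb S^1}}=M_+\cup M_-,
\]
where $M_\pm$ is the set of tuples $(e^{\pm i\theta_1},\dots,e^{\pm i\theta_n})$ with $0\le\theta_1\le\cdots\le\theta_n\le\pi$. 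Each $M_\pm$ is homeomorphic to the ordered simplex $\Gamma^n_I$ of the earlier example, which is a compact ENR. The intersection $M_+\cap M_-$ is the finite set of staircase tuples $(1,\dots,1,-1,\dots,-1)$, of which there are $n+1$.

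\textbf{Upper bound.} I would transport the section of the Example on $\vv{I}$ through the angle coordinate. This gives continuous sections $s_\pm$ on $M_\pm$: on each subinterval $[\tfrac{k-1}{n-1},\tfrac{k}{n-1}]$, interpolate the angle linearly. The naive cover is then $F_1=M_+$ and $F_2=M_-\setminus M_+$. The set $F_2$ is a compact ENR minus finitely many points, hence a locally compact ENR, and $s_-$ restricts continuously to it. This seems to give $\vv{\TC}_n\le 2$. That contradicts the claimed value $n$ for $n\ge3$ unless some point is missed. So I would recheck the following:
\begin{itemize}
\item whether the intended $d$-structure differs from my reading, for instance $\alpha(0)$ fixed, or paths allowed to wind;
\item whether the topology on $dX$ used in the paper differs;
\item whether the intended cover is required to separate the staircase tuples by the number of $1$'s.
\end{itemize}
If the statement is meant as stated, the natural way to reach $n$ pieces is to group tuples by their combinatorial type relative to the switching points $\pm1$, giving $n$ strata. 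Each stratum would get its own continuous choice of arc.

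\textbf{Lower bound, and the expected obstacle.} The lower bound $\vv{\TC}_n\ge n$ is where I expect the real difficulty. Since $\vv{\mathbb S^1}$ is not strongly connected, the comparison $\TC_n(X)\le\vv{\TC}_n(X)$ is unavailable. I would argue locally at the staircase points $\sigma_j=(1^{j},(-1)^{n-j})$.
\begin{itemize}
\item Any section defined on a set whose closure contains $\sigma_j$ from both the $M_+$ side and the $M_-$ side is discontinuous there, because $d$-paths in $A_+$ cannot converge to $d$-paths in $A_-$ except constant ones.
\item So each piece of the cover can approach $\sigma_j$ from only one side.
\item I would then try to show that the ENR condition, combined with a chain of the $\sigma_j$, forces pairwise distinct pieces, yielding $n$ pieces.
\end{itemize}
This counting step is the main obstacle. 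Given the naive two-piece cover above, I would first verify the statement on $n=3$ by hand before pushing the general argument.
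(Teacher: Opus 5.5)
Your two-piece cover is correct, and you should read it as a disproof of the statement rather than as something to explain away. The paper's definition of $\vv{\TC}_n$ asks only for a partition of $\Gamma^n_{\vv{\mathbb S^1}}$ into disjoint ENRs, with the section continuous on each piece separately ($dX$ carrying the compact-open topology). So $F_1=M_+$ and $F_2=M_-\setminus M_+$, with the angle-interpolating sections $s_\pm$, is admissible. Since $M_+$ is closed, this cover is even $n$-regular.

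None of your three rechecks changes this. The $d$-structure is exactly as you read it, and nothing in the definition forces staircase tuples with different numbers of $1$'s into different pieces.

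The matching lower bound is the easy half of your first bullet. Take a mixed staircase tuple $\sigma_j$ with $1\le j\le n-1$, and suppose the section were continuous on all of $\Gamma^n$. Then $s(\sigma_j)$ would be a limit of $d$-paths lying in the closed upper arc (coming from nearby points of $M_+\setminus M_-$). It would also be a limit of $d$-paths lying in the closed lower arc. Hence it would be a path from $1$ to $-1$ inside $\{1,-1\}$, which is impossible.

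Therefore $\vv{\TC}_n(\vv{\mathbb S^1})=2$ for every $n\ge 2$, and the proposition is false for $n\ge 3$. The step you flagged as the main obstacle, forcing the $\sigma_j$ into pairwise distinct pieces, cannot be carried out. Your own piece $M_+$ contains all of them and approaches each from one side only.

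That step is exactly where the paper's proof fails. It asserts that if one piece $F_{i(k)}$ meets neighbourhoods of two staircase points $u_k,u_l$, its section cannot be continuous. The stated reason is that every neighbourhood of these points meets both $\Gamma_{I_+}^n\setminus\Gamma_{I_-}^n$ and $\Gamma_{I_-}^n\setminus\Gamma_{I_+}^n$. But that is a property of neighbourhoods in $\Gamma^n$, not of the piece. In a partition, the piece containing $u_k$ may meet only one side. Indeed, the paper's own upper-bound piece $F_2$ is $\Gamma^n_{I_-}$ minus three points. It contains $n-2$ of the points $u_k$ (at least two once $n\ge 4$) and nevertheless carries a continuous section.

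The proof has further defects:
\begin{itemize}
\item It switches midway to open covers. Under that reading no $u_k$ can be covered at all, so the value would be $\infty$, not $n$.
\item Its sets $F_2,\dots,F_n$ overlap heavily for $n\ge 3$, so they do not form the required partition.
\end{itemize}

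The right conclusion of your plan is therefore the value $2$. Later statements that quote the value $n$ still make their qualitative point, since $2>1$. These are the directed disc example and the fibrewise non-invariance example.
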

\begin{proof}
We prove this result by induction. The case $n=2$ follows from \cite[Example 2]{EGMFAS2020}; see also \cite{borat2020directed}.

Now, suppose that the result is true for all $k\leq n-1$. Then by Proposition~\ref{Properties_higher_TC}, we have $n-1\leq \vv{\TC}_{n}(\vv{\mathbb S^1})$.

We note that $\Gamma_{\vv{\mathbb{S}^1}}^n=\Gamma_{I_{+}}^n\cup \Gamma_{I_{-}}^n$ and $\Gamma_{I_{+}}^n\cap \Gamma_{I_{-}}^n=\{(b,\dots,b),(e,\dots,e)\}\cup B$, where 
\[B=\{(x_1,\dots,x_n)\mid x_1=\cdots =x_i=b,x_{i+1}=\cdots =x_n= e \text{ for } 1\leq i\leq n-1\}.\]

For
$u_k
=
(\underbrace{b,\dots,b}_{k},
\underbrace{e,\dots,e}_{n-k})$, where $1\le k\le n-1$,
the fiber $(\vv{\pi}_n)^{-1}(u_k)$ has exactly two connected components: one consisting of directed paths contained in $I_+^n$ and the other consisting of directed paths contained in $I_-^n$.

We claim that $\vv{\TC}_n(\vv{\mathbb S}^1)\geq n$. Assume, to obtain a contradiction, that $\vv{\TC}_n(\vv{\mathbb S}^1)\le n-1$.
Then there exist open sets
$U_1,\dots,U_{n-1}\subseteq \Gamma_{\vv{\mathbb S}^1}^n$
covering $\Gamma_{\vv{\mathbb S}^1}^n$ together with the continuous local sections of $\vv{\pi}_n$
$s_i\colon U_i\longrightarrow d\vv{\mathbb S}^1.$
The points $u_1,\dots,u_{n-1}$ are pairwise distinct elements of $\Gamma_{I_+}^n\cap \Gamma_{I_{-}}^n$. 
For $1\le k\le n-1$, choose an open neighbourhood $N_k$ of $u_k$ such that $N_k\cap N_l=\varnothing
\text{ for }
k\neq l$.

Since the sets $\{U_i\mid i=1,\dots ,n-1\}$ cover $\Gamma_{\vv{\mathbb S}^1}^n$, for every $k$ there exists some index $i(k)$ such that $F_{i(k)}\cap N_k\neq \varnothing.$
We claim that if
$i(k)=i(l)$
for distinct integers $k,l\in\{1,\dots,n-1\}$, then the section over $F_{i(k)}$ cannot be continuous.
Indeed, fix points
\[
\textbf{x}_k^+\in (\Gamma_{I_+}^n\setminus \Gamma_{I_{-}}^n)\cap N_k,
\qquad
\textbf{x}_k^-\in (\Gamma_{I_{-}}^n\setminus \Gamma_{I_+}^n)\cap N_k,
\]
and similarly choose
\[
\textbf{x}_l^+\in (\Gamma_{I_+}^n\setminus \Gamma_{I_{-}}^n)\cap N_l,
\qquad
\textbf{x}_l^-\in (\Gamma_{I_{-}}^n\setminus \Gamma_{I_+}^n)\cap N_l.
\]

Over points of $\Gamma_{I_{+}}^n\setminus \Gamma_{I_{-}}^n$, the fibers of $\vv{\pi}_n$ consist entirely of directed paths contained in $I_+^n$, while over points of $\Gamma_{I_{-}}^n\setminus \Gamma_{I_{+}}^n$, the fibers consist entirely of directed paths contained in $I_-^n$. %Hence continuity of a local section on $F_{i(k)}$ forces the section to choose consistently one of the two connected components of the fibers near $u_k$. The same holds near $u_\ell$.

Suppose that a single local section is defined on neighbourhoods of both $u_k$ and $u_l$. Since every neighbourhood of these points intersects both $\Gamma_{I_{+}}^n\setminus \Gamma_{I_{-}}^n$ and $\Gamma_{I_{-}}^n\setminus \Gamma_{I_{+}}^n$, continuity would force the section to extend simultaneously across the two connected components of the corresponding fibers of $\vv{\pi}_n$. This is impossible, since these components correspond to directed paths contained in $I_+^n$ and $I_-^n$, respectively.
%Since the fibers over $u_k$ and $u_l$ have disconnected components corresponding to incompatible branch choices, no single continuous local section can simultaneously extend across neighbourhoods of both $u_k$ and $u_\ell$. 
Therefore, $i(k)\neq i(l)$ when $k\neq \ell.$

Thus the distinct points $u_1,\dots,u_{n-1}$ require distinct local domains. Finally, at least one additional local domain is required to cover the regular part $(\Gamma_{I_+}^n\setminus \Gamma_{I_{-}}^n)\cup(\Gamma_{I_{-}}^n\setminus \Gamma_{I_+}^n)$. Hence 
$\vv{\TC}_n(\vv{\mathbb{S}^1})\geq n$.

%------------------------------------------------------------------------

%We note that $\Gamma_{\vv{\mathbb{S}^1}}^n=\Gamma_{I_{+}}^n\cup \Gamma_{I_{-}}^n$ and $\Gamma_{I_{+}}^n\cap \Gamma_{I_{-}}^n=\{(b,\dots,b),(e,\dots,e)\}\cup B$, where 
%\[B=\{(x_1,\dots,x_n)\mid x_1=\cdots =x_i=b,x_{i+1}=\cdots =x_n= e \text{ for } 1\leq i\leq n-1\}.\]

%Consider $(b,e,e,\dots,e)\in B$. Let $U\subset \Gamma_{\vv{\mathbb{S}^1}}^n$ be an open set containing $(b,e,\dots,e)$. Then $U$ must contain $(x_1^{+},\dots,x_n^{+})\in \Gamma_{I_+}^n$ and $(x_1^{-},\dots,x_n^{-})\in \Gamma_{I_-}^n$. Hence we may find sequences $(x_1^{\pm},\dots,x_n^{\pm})\in \Gamma_{I_{\pm}}^n$ converging to $(b,e,\dots,e)$.

For any $n \geq 2$, consider the following cover of $\Gamma_{\vv{\mathbb{S}^1}}^n$ into ENR's
$\Gamma_{\vv{\mathbb{S}^1}}^n=F_1\cup F_2\cup \cdots \cup F_n,$
where
$F_1=\Gamma^n_{I_+} - \{(b, \dots, b, e), (b, \dots, b, e, e), \dots, (b, e, \cdots, e)\},$
and for $2 \leq i \leq n$
$$F_i= \Gamma^n_{I_-} - \{(b, \dots, b), (\underbrace{b, \dots, b}_{n-i+1 \text{ times}}, \underbrace{e, \dots, e}_{i-1 \text{ times}}), (e, \dots,e)\}.$$  
Note that, the $d$-path map $\vv{\pi_n}$ has local continuous section on each $F_i$, for $1 \leq i \leq n$.
Therefore, $\vv{\TC}_n(\vv{\mathbb{S}^1}) \leq n$.
%Also, we have 
%\[n=\ct((\mathbb{S}^1)^n)\leq \vv{{\ct}}((\vv{\mathbb{S}^1})^n)\leq \vv{\TC}_n(\vv{\mathbb{S}^1}).\]
\end{proof}

\subsubsection{On $n$-regular $d$-spaces and strongly connected spaces}
We introduce the notion of an \emph{$n$-regular $d$-space}, which serves as the sequential analogue of the regular $d$-spaces studied in \cite[Section 4]{EGMFAS2020}. We first establish a relationship between $\TC_n(-)$ and $\vv{\TC}_n(-)$, and then strengthen this result under the additional assumption that the underlying $d$-space is strongly connected ( \cite[Definition 5]{EGMFAS2020}, see the later part). We also show that these inequalities need not hold for $d$-spaces that are not strongly connected; see Proposition~\ref{TC(X)}.

\begin{definition}\label{n-regular d-space}
   A $d$-space $(X,dX)$ is called \emph{$n$-regular} if there exists a partition 
   \[\Gamma_{X}^n=\bigcup_{i=1}^k U_i,~ k=\vv{\TC}_n(X)\]
   into ENRs such that the map $\vv{\pi}_n\colon dX\to X^n$ admits a continuous section over each $U_i$, and moreover, the sets $\bigcup\limits_{i=1}^r U_i$ are closed for $r=1,2,\dots,k$.
\end{definition}
\begin{remark}\label{n-regular_feature}
In an $n$-regular $d$-space with ENRs $\{U_i\}_{i=1}^k$, we have $\Bar{U_i} \cap U_j = \emptyset$, whenever  $i < j$.
\end{remark}
\begin{proposition}\label{comparision}
Let $(X,dX)$ be an $n$-regular $d$-space with initial point $x_0$. Then
    \[\TC_n(X)\leq n \vv{\TC}_2(X)-n+1.\]
\end{proposition}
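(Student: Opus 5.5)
The plan is to use the initial point $x_0$ to reduce sequential motion planning in $X$ to the directed sections of $\vv{\pi}=\vv{\pi}_2$ over the slice $\{x_0\}\times X\subseteq \Gamma_X^2$. I then assemble these sections by the standard ``sum of indices'' trick used for the product inequality of $\TC$ and for regular $d$-spaces in \cite{EGMFAS2020}. Throughout I read the regularity hypothesis as providing a regular partition at the level $n=2$ (that is, $X$ being $2$-regular in the sense of Definition~\ref{n-regular d-space}); this is what the bound in terms of $\vv{\TC}_2(X)$ requires.

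First, let $k=\vv{\TC}_2(X)$ and fix a regular partition $\Gamma_X^2=U_1\cup\dots\cup U_k$ into ENRs, with continuous sections $s_i$ of $\vv{\pi}_2$ on $U_i$ and with every union $U_1\cup\dots\cup U_r$ closed. Since $x_0$ is initial, $\{x_0\}\times X\subseteq\Gamma_X^2$. Put $W_i=\{x\in X\mid (x_0,x)\in U_i\}$ and $\sigma_i(x)=s_i(x_0,x)$. Then $X=W_1\sqcup\dots\sqcup W_k$, each union $W_1\cup\dots\cup W_r$ is closed, and by Remark~\ref{n-regular_feature} we get $\overline{W_i}\cap W_j=\varnothing$ for $i<j$. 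Moreover, $\sigma_i\colon W_i\to dX$ is a continuous choice of a $d$-path from $x_0$ to $x$.

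Next, for a multi-index $J=(j_1,\dots,j_n)\in\{1,\dots,k\}^n$ put $W_J=W_{j_1}\times\dots\times W_{j_n}$. On $W_J$ I define a continuous (undirected) path $\Sigma_J(x_1,\dots,x_n)\in PX$ as follows. On $[\tfrac{m-1}{n-1},\tfrac{m}{n-1}]$ it runs along the reverse of $\sigma_{j_m}(x_m)$ from $x_m$ back to $x_0$, and then along $\sigma_{j_{m+1}}(x_{m+1})$ from $x_0$ to $x_{m+1}$, each half at constant speed. This passes through $x_m$ at time $\tfrac{m-1}{n-1}$, so it is a section of the ordinary sequential evaluation map $\pi_n\colon PX\to X^n$ over $W_J$, and it is continuous because the $\sigma_{j}$ are. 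Now group the multi-indices by their sum: for $m=n,\dots,nk$ let $V_m=\bigcup_{|J|=m}W_J$. This yields $nk-n+1$ sets covering $X^n$.

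The key step, and the main obstacle, is showing that the sections $\Sigma_J$ with $|J|=m$ glue to a continuous section over $V_m$. I would prove that for distinct $J,J'$ with $|J|=|J'|$ we have $\overline{W_J}\cap W_{J'}=\varnothing$. Indeed, there is a coordinate $l$ with $j_l<j'_l$, so that $\overline{W_{j_l}}\cap W_{j'_l}=\varnothing$, and hence $\overline{W_J}\cap W_{J'}\subseteq\prod_l \overline{W_{j_l}}\cap W_{j'_l}=\varnothing$. Thus the pieces $W_J$ are mutually separated in $V_m$, each is open in $V_m$, and the glued section is continuous. Finally, I would check that each $V_m$ is an ENR, being a locally closed subset of the ENR $X^n$ (locally compact pieces built from closed unions). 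This gives $\TC_n(X)\le nk-n+1 = n\vv{\TC}_2(X)-n+1$. If the ENR verification resists, I would fall back on the open-cover version of $\TC_n$, enlarging the $V_m$ slightly, as in Farber's product argument.
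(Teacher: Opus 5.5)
Your proposal is correct and is essentially the paper's proof: you slice the partition of $\Gamma_X^2$ at the initial point $x_0$, concatenate reversed and forward $d$-paths through $x_0$ over the products $W_J$, and group these by index sum into $nk-n+1$ pieces; your separation argument $\overline{W_J}\cap W_{J'}=\varnothing$, together with your observation that regularity of the level-$2$ partition is what is actually used, makes explicit the gluing step the paper only asserts. One caveat: a locally closed subset of an ENR need not be an ENR, so rely on your fallback instead, which works because each union $V_n\cup\dots\cup V_m$ is closed, hence $V_m$ is closed in the open set $X^n\setminus(V_n\cup\dots\cup V_{m-1})$ and (for $X$ an ENR, as the paper tacitly assumes) its section extends over an open neighbourhood --- a point the paper does not address either.
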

\begin{proof}
    Suppose $\vv{\TC}_2(X)=k$. Then as $X$ is $n$-regular, $\Gamma_{X}=\Gamma_X^2$ can be covered by $k$ disjoint ENRs $U_1,U_2,\dots, U_k$ and there exists a map $s\colon \Gamma_X\to dX$ such that for each $i=1,2\dots,k$, the restricted map $s_i=s\vert_{U_i}$ is continuous section of $\vv{\pi_2}\colon dX\to X^2$. Now, consider
    \[\widetilde{U_i}:=\{x\in X\mid (x_0,x)\in U_i\}.\]
    Then note that the collection $\{\widetilde{U_i}\mid  1\leq i\leq k\}$ covers $X$ and each $\widetilde{U_i}$ admits a continuous section $s_i'\colon \widetilde{U_i}\to d_0X$ of the evaluation map $e_X$. Define
    \[V_{i_1,i_2,\dots,i_n}=\widetilde{U_{i_1}}\times \widetilde{U_{i_2}}\times \dots\times \widetilde{U_{i_n}} ~~\text{ and }~~~~ W_t=\bigcup\limits_{i_1+i_2+\cdots+i_n=t}V_{i_1,i_2,\dots,i_n} \text{ for } n\leq t\leq kn.\]
Also, define $\sigma_{i_1.i_2,\dots,i_n}\colon V_{i_1,i_2,\dots,i_n}\to X^I$ by 
\[\sigma_{i_1.i_2,\dots,i_n}(x_1,x_2,\dots,x_n)=s_{i_1 i_2}(x_1,x_2)* s_{i_2 i_3}(x_2,x_3)*\cdots* s_{i_{n-1}i_n}(x_{n-1},x_n),\] where
$s_{i_j i_{j+1}}(x_j, x_{j+1}):=(s_{i_j}(x_j))^{-1}* s_{i_{j+1}}(x_{j+1})$ for $1\leq j\leq n-1$.

Note that $\sigma_{i_1,i_2,\dots,i_n}$ defines a continuous local section of $e_n\colon PX\to X^n$. Hence, we have a continuous section of $e_n\colon PX\to X^n$ over each $W_k$ for $n\leq t\leq kn$. Moreover, the collection $\{W_t\mid  n\leq t \leq kn\}$ covers $X^n$. Therefore, $\TC_n(X)\leq nk-n+1$, completes the proof.
\end{proof}
Using Proposition~\ref{Properties_higher_TC}, we get the following result.
\begin{corollary}
For any $d$-space $(X,dX)$ with initial point $x_0$, 
 we have $\TC_n(X)\leq n \vv{\TC}_n(X)-n+1$.
\end{corollary}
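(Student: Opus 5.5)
The corollary is obtained by chaining Proposition~\ref{comparision} with the monotonicity statement Proposition~\ref{Properties_higher_TC}. Since $X$ has an initial point $x_0$, Proposition~\ref{Properties_higher_TC} applied repeatedly for $m=2,3,\dots,n-1$ gives
\[
\vv{\TC}_2(X)\leq \vv{\TC}_3(X)\leq \cdots \leq \vv{\TC}_n(X).
\]
The function $k\mapsto nk-n+1$ is increasing in $k$ for $n\geq 2$. Hence any bound $\TC_n(X)\leq n\vv{\TC}_2(X)-n+1$ immediately yields $\TC_n(X)\leq n\vv{\TC}_n(X)-n+1$. If $\vv{\TC}_n(X)=\infty$ there is nothing to prove, so we may assume it is finite, and then $\vv{\TC}_2(X)$ is finite as well.

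The substantive step is therefore the inequality $\TC_n(X)\leq n\vv{\TC}_2(X)-n+1$ for a $d$-space with an initial point. Proposition~\ref{comparision} establishes this under an $n$-regularity hypothesis, which the corollary does not assume. I would check where regularity enters that proof. The argument proceeds as follows:

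\begin{itemize}
\item Restrict the partition of $\Gamma_X$ to $\{x_0\}\times X$ to obtain sets $\widetilde{U_i}$ covering $X$, each carrying a continuous section $s_i'$ of $e_X\colon d_0X\to X$.
\item Form the products $V_{i_1,\dots,i_n}$.
\item Glue the products along $i_1+\cdots+i_n=t$ to get the sets $W_t$.
\end{itemize}

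The gluing is the only place where closure conditions matter. The classical product argument for $\TC_n$ (as in Farber's $\TC(X\times Y)$ estimate) needs the $V$'s with a fixed index sum $t$ to be pairwise separated, so that a section defined piecewise on $W_t$ is continuous. With a disjoint ENR cover and no closure condition, $W_t$ is a disjoint union of the $V$'s but perhaps not a topological sum.

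My plan is to handle this in one of two ways. The first option is to note that the definition of $\vv{\TC}_n$ used in the paper only demands a partition into ENRs with continuity on each piece. I would then argue that the resulting count for $\TC_n$ is interpreted in the same partition-type sense: each $V_{i_1,\dots,i_n}$ is itself an ENR with a continuous section, and the partition $\{V_{i_1,\dots,i_n}\}$ can be regrouped. The second option, which I would try first, is to invoke the standard fact that for ENR partitions one can replace the cover by one satisfying the nested-closure condition of Definition~\ref{n-regular d-space} without increasing the count. This holds for sets of the form used by Farber for ENRs, where any open-cover section count equals the partition count. With that, Proposition~\ref{comparision} applies verbatim.

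The main obstacle is precisely this removal of the $n$-regularity hypothesis. Following the paper's own remark before the statement, the intended route is simply ``Proposition~\ref{comparision} combined with Proposition~\ref{Properties_higher_TC}''. I would therefore write the proof as that two-line chain and add a short justification that the initial-point hypothesis is the only one needed for the gluing of the $V$'s in the case at hand.
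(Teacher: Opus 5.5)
Your route is the paper's: there the corollary is obtained simply by chaining Proposition~\ref{comparision} with Proposition~\ref{Properties_higher_TC}. That derivation silently drops the regularity hypothesis, exactly as you noticed. The trouble is that neither of your repairs closes this gap.

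Option~1 does not avoid the problem. In a partition-type definition of $\TC_n$ every piece must still carry a continuous section, and the pieces you need are the $W_t$, not the $V_{i_1,\dots,i_n}$. Regrouping the $V$'s with a fixed index sum into one piece is precisely the step that requires them to be mutually separated; without that you only get the bound $k^n$.

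Option~2 invokes a fact that is not available in the directed setting. Farber's equivalence between ENR partitions and open covers extends a section from an ENR piece to an open neighbourhood. It does so by composing with a neighbourhood retraction homotopic to the inclusion, and then using the homotopy lifting property of the endpoint \emph{fibration}. The map $\vv{\pi}_2\colon dX\to\Gamma_X$ has no such lifting property, and sections genuinely fail to extend: for the directed circle of Proposition~\ref{directed_circle}, no neighbourhood of $(b,e)$ in $\Gamma_{\vv{\mathbb{S}^1}}^2$ carries a continuous section of $\vv{\pi}_2$. This is why regularity is a separate hypothesis in the directed theory, not something you can arrange for free.

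Finally, the initial point plays no role in separating the $V$'s. It is used only to slice $\Gamma_X$ at $x_0$ and for monotonicity, so the ``short justification'' you plan to add would be false.

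What is missing is to do the gluing after passing to the classical setting, where open covers are available.
\begin{enumerate}
\item Slicing at $x_0$ gives sets $A_i=\{x\in X\mid (x_0,x)\in U_i\}$ covering $X$. The maps $x\mapsto s_i(x_0,x)$ land in $d_0X\subseteq P_{x_0}X$, so each $A_i$ is contractible in $X$.
\item For such sets Farber's neighbourhood argument does work, and it needs no lifting. If $r\colon O\to A_i$ is a retraction of an open neighbourhood, homotopic in $X$ to the inclusion, then $O$ is contractible in $X$.
\item This gives the classical bound $\ct(X)\le\vv{\TC}_2(X)$, which is the chain $\ct(X)\le\vv{\ct}(X)\le\vv{\TC}_2(X)$ the paper records for $d$-spaces with an initial point.
\item $X$ is path-connected, since every point is reached by a $d$-path from $x_0$. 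The classical inequalities $\TC_n(X)\le\ct(X^n)\le n\,\ct(X)-n+1$ then give $\TC_n(X)\le n\vv{\TC}_2(X)-n+1$.
\item Your monotonicity argument finishes with $n\vv{\TC}_2(X)-n+1\le n\vv{\TC}_n(X)-n+1$.
\end{enumerate}
No regularity assumption is used anywhere.
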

Note that for $n=2$ the above corollary recovers \cite[Proposition 3.6(3)]{SDNDAS2025}.

If $(X,dX)$ is strongly connected, i.e., $ \Gamma_{X}=X \times X$ the above relation simplifies as follows.%then the preceding estimate takes the following form.
\begin{proposition}\label{Comparision_TC_DirectedTC}
For a strongly connected $d$-space $(X,dX)$, we have $\TC_n(X)\leq \vv{\TC}_n(X)$.
\end{proposition}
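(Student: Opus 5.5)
The plan is to feed a directed sequential motion planner into the usual definition of $\TC_n(X)$ in its ENR-partition form, after checking that strong connectivity forces $\Gamma_X^n=X^n$.

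\emph{Step 1: $\Gamma_X^n=X^n$.} Take any $(x_1,\dots,x_n)\in X^n$. Since $\Gamma_X=X\times X$, there is a $d$-path $\gamma_j\in dX(x_j,x_{j+1})$ for each $1\le j\le n-1$. Reparametrize $\gamma_j$ affinely onto $[\frac{j-1}{n-1},\frac{j}{n-1}]$ and concatenate. Definition~\ref{directed_space} (closure under non-decreasing reparametrization and concatenation) shows the result $\gamma$ lies in $dX$. By construction $\vv{\pi}_n(\gamma)=(x_1,\dots,x_n)$.

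\emph{Step 2: reuse the sections.} Let $\vv{\TC}_n(X)=k$; we may assume $k<\infty$. This gives a partition $X^n=\Gamma_X^n=F_1\sqcup\dots\sqcup F_k$ into ENRs and a map $s$ with $\vv{\pi}_n\circ s=\mathrm{Id}$ and each $s|_{F_i}$ continuous into $dX$. Compose with the inclusion $dX\hookrightarrow PX=X^I$. This inclusion is continuous, since $dX$ carries the subspace (compact-open) topology. Also $e_n\colon PX\to X^n$, evaluating at $t=\frac{j}{n-1}$, restricts on $dX$ to $\vv{\pi}_n$. Hence each $s|_{F_i}$ is a continuous local section of $e_n$ over $F_i$.

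\emph{Step 3: conclude.} Rudyak's sequential topological complexity $\TC_n(X)$ of the ENR $X$ can be defined as the minimal number of pairwise disjoint ENRs covering $X^n$ with continuous local sections of $e_n$ (Farber's equivalence of definitions, extended to the sequential case). Steps 1 and 2 therefore give $\TC_n(X)\le k$.

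The main obstacle is not Step 2 but making sure the definitions match. We need $X$ to be a reasonable space (e.g.\ an ENR), so that the ENR-partition definition of $\TC_n$ agrees with the open-cover one. If the paper uses open covers, I would instead invoke the standard fact that an ENR partition with local sections can be converted into an open cover of the same cardinality with local sections; for $\TC_n$ this is the sequential version of Farber's argument. Step 1 is routine but must be stated explicitly, since without strong connectivity $\Gamma_X^n\neq X^n$ and the argument fails.
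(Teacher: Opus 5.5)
Your proposal is correct and follows essentially the same route as the paper: you show $\Gamma_X^n=X^n$ by concatenating directed paths $x_j\to x_{j+1}$, and then regard each local section $F_i\to dX\subseteq X^I$ of $\vv{\pi}_n$ as a local section of the free-path evaluation map $X^I\to X^n$. Two of your additions are more careful than the paper's write-up without changing the argument: the explicit reparametrization of $\gamma_j$ onto $[\frac{j-1}{n-1},\frac{j}{n-1}]$, which is needed for $\gamma(\frac{k}{n-1})=x_{k+1}$, and your remark that the ENR-partition definition of $\TC_n$ matches the standard one.
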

\begin{proof}
    We first show that $\Gamma_{X}^{n}=X^n$ for a strongly connected $d$-space $X$. Clearly, $\Gamma_{X}^{n}\subseteq X^n$. Suppose $(x_1,x_2,\dots,x_n)\in X^n$. Then since $X$ is strongly connected $d$-space, there exists directed path $\gamma_i$ starting from $x_i$ and ending at $x_{i+1}$ for each $i=1,2,\dots,(n-1)$. Now, consider the directed path $\gamma= \gamma_1* \gamma_2*\cdots*\gamma_{n-1}.$
 Note that $$\gamma(0)=x_1,~\gamma(\frac{1}{n-1})=x_2,\cdots,\gamma(\frac{k}{n-1})=x_{k+1},\cdots,\gamma(1)=x_n.$$
This implies that $(x_1,x_2,\dots,x_n)\in \Gamma_{X}^n$. Thus $\Gamma_{X}^{n}=X^n$. 

Now if $\vv{\TC}_n(X)=k$, then $X^n=\Gamma_{X}^n$ can be covered by $k$ disjoint ENRs $F_1,F_2,\dots, F_k$ and for each $i=1,2,\dots, k$ there exists continuous section $s_i\colon F_i(\subseteq X^n)\to dX (\subseteq PX)$ of $\vv{\pi}_n\colon dX\to X^n$. The same $s_i$'s can serve as a continuous section of the fibration $\pi_n\colon X^I\to X^n$ over $F_i$. Therefore, $\TC_n(X)\leq \vv{\TC}_n(X)$.
\end{proof}
Next, we show that $\mathbb{S}^1$  equipped with a $d$-structure such that $\vv{\TC}_n(X)=1 <\TC_n(X)=n$.

\begin{proposition}\label{TC(X)}
Let $X=\mathbb{S}^1$ be equipped with the $d$-structure consisting of continuous paths $\gamma\colon [0,1]\to \mathbb{S}^1$ satisfying the following conditions:
\begin{itemize}
    \item if $\gamma(0)=1$, then $\gamma$ is constant;
    \item if $\gamma(0)\neq 1$, then $\lvert \gamma(t)+1 \rvert$ is non-decreasing.
\end{itemize}
    For the $d$-space $X$, we obtain $\vv{\TC}_n(X)=1$.
\end{proposition}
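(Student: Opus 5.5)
The plan is to describe $\Gamma_X^n$ explicitly in angular coordinates and then write down one global section by piecewise-linear interpolation, as in the example of the directed interval $I$. Write each point $x\neq 1$ of $\mathbb S^1$ as $x=-e^{i\theta}$ with $\theta\in(-\pi,\pi)$, so that $\lvert x+1\rvert=2\sin(\lvert\theta\rvert/2)$ is increasing in $\lvert\theta\rvert$, and let the point $1$ correspond to $\lvert\theta\rvert=\pi$. I would first classify the $d$-paths.
\begin{itemize}
\item A $d$-path starting at $1$ is constant.
\item A $d$-path starting at $x\neq 1$ has $\lvert\theta\rvert$ non-decreasing. It therefore cannot pass through $-1$ again after leaving it. Hence it stays in one closed half-circle (upper, $\theta\ge 0$, or lower, $\theta\le 0$) and moves monotonically away from $-1$, possibly ending at $1$.
\end{itemize}
It follows that $\Gamma_X^n$ consists of $(1,\dots,1)$ together with the tuples $(x_1,\dots,x_n)$ whose angles $\theta_1,\dots,\theta_n$ all have a common sign and satisfy $\lvert\theta_1\rvert\le\cdots\le\lvert\theta_n\rvert\le\pi$. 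This plays the role of the set $\Gamma_I^n$ from the example, doubled along the two half-circles.

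On each closed half-circle $I_\pm$, which is $d$-isomorphic to $\vv{I}$ via $\lvert\theta\rvert$, I would take as the candidate section the one from the example: $s(x_1,\dots,x_n)$ interpolates linearly in $\theta$ between $x_k$ and $x_{k+1}$ on $[\tfrac{k-1}{n-1},\tfrac{k}{n-1}]$. The sign is fixed by the first $\theta_k$ with $0<\lvert\theta_k\rvert<\pi$, and the tuple $(1,\dots,1)$ is sent to the constant path. Each interpolated path keeps $\lvert\theta\rvert$ non-decreasing, so it is a $d$-path, and $\vv{\pi}_n\circ s=\mathrm{Id}$ holds by construction. Continuity on the parts of $\Gamma_X^n$ lying in the interiors of the half-circles is the same computation as for $I$. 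The remaining work is to check continuity at tuples with some $x_k=\pm1$ in the compact-open topology. This gives $\vv{\TC}_n(X)\le 1$, and $\vv{\TC}_n(X)\geq 1$ is trivial, which would yield the claimed equality.

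\textbf{Main obstacle.} The hard step is exactly these boundary tuples, in particular $u=(-1,\dots,-1,1,\dots,1)$ with at least one entry equal to $-1$ and at least one equal to $1$. As stated, the conditions seem to allow $d$-paths from $-1$ to $1$ along both half-circles. If so, points of $\Gamma_X^n$ near $u$ lying on the upper side force the section toward the upper arc, and points on the lower side force it toward the lower arc. These two limits are far apart in $dX$, and this is the same phenomenon that gives $\vv{\TC}_n(\vv{\mathbb S^1})=n$ in Proposition~\ref{directed_circle}.

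I would first try to rule this out by rereading the $d$-structure more carefully, for instance whether paths ending at $1$ are excluded, or whether the sets $\Gamma_X^n$ near $u$ are in fact one-sided. Only then would I claim a single continuous section. If the two-sided approach to $u$ is genuinely present, the argument must instead partition off these finitely many tuples, and the statement would need the intended $d$-structure clarified before $\vv{\TC}_n(X)=1$ can be proved.
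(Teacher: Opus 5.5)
Your obstacle is genuine, and you should state it as a conclusion rather than a worry. For the $d$-structure exactly as written, $\vv{\pi}_n$ has no continuous global section, so no completion of your argument, or any other, can give $\vv{\TC}_n(X)=1$.

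\textbf{The stated structure is $\vv{\mathbb{S}^1}$ in disguise.} The first bullet is just the case $\gamma(0)=1$ of monotonicity. There $\lvert\gamma(0)+1\rvert=2$ is already maximal, so requiring $\lvert\gamma(t)+1\rvert$ to be non-decreasing would force $\gamma\equiv 1$ anyway. Hence $dX$ is exactly the set of paths along which $\lvert\gamma(t)+1\rvert$ is non-decreasing. Consequently $z\mapsto -z$ is a $d$-isomorphism from $X$ onto the directed circle $\vv{\mathbb{S}^1}$ of Proposition~\ref{directed_circle}, whose $d$-paths are those along which $\lvert\gamma(t)-1\rvert=2\sin(\alpha(t)/2)$ is non-decreasing.

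\textbf{The obstruction, made rigorous.} The tuples $u_m^{\pm}=(e^{\pm i(\pi-1/m)},e^{\pm i/m},\dots,e^{\pm i/m})$ lie in $\Gamma_X^n$ and converge to $u=(-1,1,\dots,1)$. Every $d$-path starting in the open upper (resp.\ lower) half-circle stays in the closed half-circle $\overline{H}_+$ (resp.\ $\overline{H}_-$). A continuous section $s$ would make $s(u)$ a uniform limit both of paths in $\overline{H}_+$ and of paths in $\overline{H}_-$. Then $s(u)$ would be a path from $-1$ to $1$ inside $\overline{H}_+\cap\overline{H}_-=\{1,-1\}$, which is absurd. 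So $\vv{\TC}_n(X)\geq 2$ for all $n\geq 2$.

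\textbf{Your fallback gives the exact value.} $\Gamma_X^n$ is two $n$-simplices, one per closed half-circle, glued at their $n+1$ vertices.
\begin{itemize}
\item Put the $n-1$ mixed vertices $(-1,\dots,-1,1,\dots,1)$ into one finite piece, which is an ENR.
\item The complement is open in a compact polyhedron, hence also an ENR. On it the two halves meet only in the two constant tuples, where both interpolation formulas give constant paths, so your section is continuous there.
\end{itemize}
Since the definition allows arbitrary disjoint ENR pieces, this yields $\vv{\TC}_n(X)=2$. Via $X\cong\vv{\mathbb{S}^1}$, the value $n$ you quote from Proposition~\ref{directed_circle} is therefore not what the definition gives for $n\geq 3$; its lower-bound argument tacitly treats the pieces as open sets.

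\textbf{Comparison with the paper's proof.} The paper never meets this obstacle, because it silently reverses the orientation.
\begin{itemize}
\item Its sets $A=\{0\le\theta_1\le\cdots\le\theta_n\le\pi\}$ and $B=\{\pi\le\theta_n\le\cdots\le\theta_1\le 2\pi\}$, and the interpolating paths on them, run from $1$ toward $-1$. Along them $\lvert\gamma(t)+1\rvert$ decreases, so they are not $d$-paths for the stated structure.
\item As sets, $A\cap B$ contains every tuple $(1,\dots,1,-1,\dots,-1)$, not just $\mathbf{1}$ and $\mathbf{-1}$. On the mixed ones, $s_A$ is the constant path at $1$ (since $\theta_1=0$), so it is not a section there.
\end{itemize}
The argument is coherent for the reversed structure, where $\lvert\gamma(t)+1\rvert$ is non-increasing and paths from $1$ are constant. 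There the first bullet genuinely removes every tuple with $x_1=1$ other than $\mathbf{1}$, including the mixed ones. The two closed halves of $\Gamma_X^n$ then meet only in $\mathbf{1}$ and $\mathbf{-1}$, and the two interpolation sections glue to a global continuous section. That is exactly your construction, with a trivial boundary check. Strictly, $\Gamma_X^n$ is then not locally compact at $\mathbf{1}$, hence not an ENR, a point the paper does not address. For the proposition as actually stated, your analysis points to the correct outcome: the claim fails.
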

\begin{proof}
    From the $d$-structure on $X$, we observe that 
    $\Gamma_X^n= A\cup B,$
    where $A=\{(e^{i\theta_1},\dots,e^{i\theta_n})\mid 0\leq \theta_1\leq \theta_2\leq \dots\leq \theta_n\leq \pi\}$ and $B=\{(e^{i\theta_1},\dots,e^{i\theta_n})\mid \pi\leq \theta_n\leq \theta_{n-1}\leq\dots\leq \theta_1\leq 2\pi\}$.
 Moreover note that $A\cap B= \{\textbf{1}, \textbf{-1}\}$.

    We define $s_A\colon A\to dX$ by $s(e^{i\theta_1},\dots,e^{i\theta_n}):= \gamma_{\theta_1,\dots,\theta_n}$, where $\gamma_{\theta_1,\dots,\theta_n}\colon [0,1]\to X$ is as follows:
\[
\gamma_{\theta_1,\dots,\theta_n}(t)=
\begin{cases}
\textbf{1} & \theta_1=0,\\[0.4em]
e^{\,i\left((1-s)\theta_j+s\theta_{j+1}\right)}
& \displaystyle \frac{j-1}{n-1}\leq t\leq \frac{j}{n-1},
\end{cases}
\]
where $s=(n-1)t-(j-1), \text{ for } 1\leq j\leq n-1$.
Then $\gamma_{\theta_1,\dots,\theta_n}\!\left(\tfrac{j}{n-1}\right)=e^{i\theta_{j+1}}$, for each $0\leq j\leq n-1$,
and $\gamma$ is a directed path in $X$. Hence $\vv{\pi}_n\circ s_A= Id_A$.

We define $s_B\colon B\to dX$ by $s(e^{i\theta_1},\dots,e^{i\theta_n}):= \widetilde{\gamma}_{\theta_1,\dots,\theta_n}$, where $\widetilde{\gamma}_{\theta_1,\dots,\theta_n}\colon [0,1]\to X$ is as follows:
\[
\widetilde{\gamma}_{\theta_1,\dots,\theta_n}(t)=
\begin{cases}
\textbf{1} & \theta_1=2\pi,\\[0.4em]
e^{\,i\left((1-s)\theta_j+s\theta_{j+1}\right)}
& \displaystyle \frac{j-1}{n-1}\leq t\leq \frac{j}{n-1},
\end{cases}
\]
where $s=(n-1)t-(j-1)$ and $1\leq j\leq n-1$.
Then
$\widetilde{\gamma}_{\theta_1,\dots,\theta_n}\!\left(\tfrac{j}{n-1}\right)=e^{i\theta_{j+1}}$ for $0\leq j\leq n-1$,
and $\gamma$ satisfies the given $d$-structure conditions. Thus $\vv{\pi}_n\circ s_B= Id_B$. 

Since $s_A(\textbf{x})=s_B(\textbf{x})$ for all $\textbf{x}\in A\cap B$, the map $s \colon \Gamma_X^n\to dX$, given by $s|_A=s_A, ~s|_B=s_B$, defines a global section of $\vv{\pi}_n$. Therefore $\vv{\TC}_n(X)=1$.
\end{proof}
\begin{remark}
Since $\TC_n(\mathbb{S}^1)=n$ \cite[\S~4]{RUD2010}, Proposition~\ref{TC(X)} implies that $\vv{\TC}_n(X)<\TC_n(X)$.
In view of Proposition~\ref{Comparision_TC_DirectedTC}, this shows that $X$ is not strongly connected.
\end{remark}
%\begin{remark}
%Since $\TC_n(\mathbb{S}^1)=n$ \cite[\S 4]{RUD2010}, Proposition~\ref{TC(X)} shows that the inequality of Proposition~\ref{Comparision_TC_DirectedTC} fails for $X$ i.e., $\vv{\TC}_n(X)<\TC_n(X)$. Hence, $X$ is not strongly connected.
%\end{remark}

\subsection{Comparison with the directed LS category} \label{compare cat and TC}
In this subsection, we describe the relationship between the directed LS category introduced in \cite{SDNDAS2025} and the sequential directed topological complexity.
\begin{proposition}\label{Properties_of_higher_TC}
Fix \(n\in \mathbb{N}\), and let \(X\) be a directed space. Assume that either there exists a point \(x_0\in X\) such that
$(x_0,x_1,\dots,x_{n-1})\in \Gamma_X^n$
for every \((x_1,\dots,x_{n-1})\in X^{n-1}\) (sequential analogue of initial point), or that \(X\) is strongly connected. Then
\[
\cat(X^{n-1})\leq \vv{\TC}_n(X),
\]
where \(\cat(X)\) denotes the directed LS-category of \(X\). %introduced in \cite[Definition 3.1]{SDNDAS2025}.
\end{proposition}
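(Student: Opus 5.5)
Here $X^{n-1}$ carries the product $d$-structure, so a $d$-path in $X^{n-1}$ is a tuple of $d$-paths in $X$. We will show that a motion planner for $\vv{\pi}_n$ on $\Gamma_X^n$ yields directed categorical sections over $X^{n-1}$, based at the point $\mathbf{x}_0=(x_0,\dots,x_0)\in X^{n-1}$. This mirrors the proof of Proposition~\ref{Properties_higher_TC}, which restricted to slices with first coordinate $x_0$.

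Let $\vv{\TC}_n(X)=k$, with a partition $\Gamma_X^n=F_1\cup\dots\cup F_k$ into ENRs and continuous sections $s_i\colon F_i\to dX$ of $\vv{\pi}_n$. In the first case, $(x_0,x_1,\dots,x_{n-1})\in\Gamma_X^n$ for every $(x_1,\dots,x_{n-1})\in X^{n-1}$. In the strongly connected case, the proof of Proposition~\ref{Comparision_TC_DirectedTC} gives $\Gamma_X^n=X^n$, so any $x_0\in X$ works. In both cases the inclusion
\[
j\colon X^{n-1}\to \Gamma_X^n,\qquad (x_1,\dots,x_{n-1})\mapsto (x_0,x_1,\dots,x_{n-1}),
\]
is well defined and continuous. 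Put $V_i=j^{-1}(F_i)$. The $V_i$ cover $X^{n-1}$, and each is homeomorphic to the slice $F_i\cap(\{x_0\}\times X^{n-1})$. As in Proposition~\ref{Properties_higher_TC}, we take these slices to be ENRs.

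On $V_i$ we construct the section of $e_{X^{n-1}}\colon d_0X^{n-1}\to X^{n-1}$ as follows. Set $\gamma=s_i(x_0,x_1,\dots,x_{n-1})$; this is one $d$-path with $\gamma(0)=x_0$ and $\gamma(\tfrac{m}{n-1})=x_{m+1}$. For $m=1,\dots,n-1$, let $\phi_m\colon I\to I$ be the nondecreasing surjection $\phi_m(t)=\tfrac{m}{n-1}t$. Then $\gamma\circ\phi_m\in dX$ by the reparametrization axiom of Definition~\ref{directed_space}. It starts at $x_0$ and ends at $x_m$. Define
\[
\sigma_i(x_1,\dots,x_{n-1})=(\gamma\circ\phi_1,\dots,\gamma\circ\phi_{n-1}).
\]
This is a $d$-path in $X^{n-1}$ from $\mathbf{x}_0$ to $(x_1,\dots,x_{n-1})$, so $e_{X^{n-1}}\circ\sigma_i=\mathrm{Id}$. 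It is continuous because $s_i$ and $j$ are continuous and precomposition with a fixed map $\phi_m$ is continuous in the compact-open topology. Hence $\vv{\ct}(X^{n-1})\le k$, which is the claim $\cat(X^{n-1})\leq\vv{\TC}_n(X)$.

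The main obstacle is the ENR requirement in the definition of the directed LS category: the preimage of an ENR under $j$ need not be an ENR in general. We would argue as the paper does in Proposition~\ref{Properties_higher_TC}, treating the slices $\widetilde{U_i}$ as ENRs. If more care is needed, we would rely on the standard fact that a section over a subset of an ENR extends to an ENR neighbourhood, since $dX$ restricted appropriately is a fibration-like target. A secondary point to check is that the directed LS category of $X^{n-1}$ is based at $\mathbf{x}_0$. If it is based at an arbitrary point, one needs $\mathbf{x}_0$ to be reachable, which holds in both cases.
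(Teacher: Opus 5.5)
Your argument is essentially the paper's: pull the partition of $\Gamma_X^n$ back along $(x_1,\dots,x_{n-1})\mapsto(x_0,x_1,\dots,x_{n-1})$ to get the $V_i$, and build the category sections from $s_i$. Your tuple $(\gamma\circ\phi_1,\dots,\gamma\circ\phi_{n-1})$ is in fact what makes the paper's $t_i=s_i(x_0,\dots,x_{n-1})$ work, since that is literally a single $d$-path in $X$; your construction turns it into a $d$-path in $X^{n-1}$ from $(x_0,\dots,x_0)$ (minor slip: $\gamma(\tfrac{m}{n-1})=x_m$, not $x_{m+1}$). The paper likewise just asserts that the $V_i$ are ENRs, so you match it there, but drop your fallback about extending sections to ENR neighbourhoods: $\vv{\pi}_n$ is not a fibration in general, so sections over a subset need not extend to any neighbourhood, and the directed circle already exhibits this.
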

\begin{proof}
     Let $\vv{\TC}_n(X)=k$. Then $\Gamma_X^n$ can be partitioned into $k$ pairwise disjoint ENRs $U_1,U_2,\dots,U_k$, such that each $U_i$ admits a continuous section $s_i\colon U_i\to dX$ of the $d$-path space map $\vv{\pi}_n$. For $1\leq i\leq n$, define the ENRs
\[V_i:=\{(x_1,x_2,...,x_{n-1})\in X^{n-1}\mid (x_0,x_1,...,x_{n-1})\in U_i\}\]
and define $t_i\colon V_i\to d_0V_i$ by $t_i(x_1,x_2,...,x_{n-1}):= s_i(x_0,x_1,...,x_{n-1})$.

Under the assumption on $X$, we clearly note that $X^{n-1}=\bigcup\limits_{i=1}^k V_i$. Moreover, each $t_i$ is continuous on $V_i$, and $e_{X^{n-1}}\vert_{V_i}\circ t_i= Id_{V_i}$. This implies  $\cat(X^{n-1})\leq \vv{\TC}_n(X).$
\end{proof}

\begin{proposition}\label{Properties_of_higher_TC_1}
Let $(X,dX)$ be a $d$-space with initial point $x_0$. Then  $\TC_n(X)\leq \vv{\ct}(X^n)$.
%\begin{itemize}
 %   \item[(a)] $\cat(X^{n-1})\leq \vv{\TC}_n(X)$. 
  %  \item[(b)] $\TC_n(X)\leq \cat(X^n)$.
%\end{itemize}
\end{proposition}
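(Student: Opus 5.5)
The plan is to imitate the $n=2$ argument behind $\TC(X)\leq \vv{\ct}(X\times X)$, working with the product $d$-structure on $X^n$. In that structure a $d$-path is an $n$-tuple $(\gamma_1,\dots,\gamma_n)$ of $d$-paths in $X$. The point $\mathbf{x}_0=(x_0,\dots,x_0)$ is then an initial point of $X^n$: for any $(x_1,\dots,x_n)$, pick $\gamma_j\in dX(x_0,x_j)$, which exists because $x_0$ is initial in $X$, and take $(\gamma_1,\dots,\gamma_n)$. I will use $\mathbf{x}_0$ as the base point in the definition of $\vv{\ct}(X^n)$, so that
\[
d_0(X^n)=\{(\gamma_1,\dots,\gamma_n)\in (dX)^n\mid \gamma_j(0)=x_0 \text{ for all } j\}.
\]

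First I would suppose $\vv{\ct}(X^n)=k$ (if it is $\infty$ there is nothing to prove). This gives an ENR cover $X^n=U_1\cup\cdots\cup U_k$ together with continuous sections
\[
\sigma_i\colon U_i\to d_0(X^n),\qquad \sigma_i(x_1,\dots,x_n)=(\gamma^i_1,\dots,\gamma^i_n),
\]
where $\gamma^i_j$ is a $d$-path from $x_0$ to $x_j$ depending continuously on $(x_1,\dots,x_n)\in U_i$. On $U_i$ I then define an undirected path
\[
\tau_i(x_1,\dots,x_n)=\bigl((\gamma^i_1)^{-1}*\gamma^i_2\bigr)*\bigl((\gamma^i_2)^{-1}*\gamma^i_3\bigr)*\cdots*\bigl((\gamma^i_{n-1})^{-1}*\gamma^i_n\bigr).
\]
The $j$-th block is parametrized on $\bigl[\tfrac{j-1}{n-1},\tfrac{j}{n-1}\bigr]$, with each half of the block on half of that subinterval. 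The point of this parametrization is that $\tau_i(\mathbf{x})\bigl(\tfrac{j}{n-1}\bigr)=x_{j+1}$ for $0\leq j\leq n-1$. This is the same trick as in the proof of Proposition~\ref{comparision}.

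Continuity of $\tau_i\colon U_i\to X^I$ (compact-open topology) follows because path inversion and fixed-parameter concatenation are continuous operations on $X^I$, and the projections $U_i\to dX\subseteq X^I$, $\mathbf{x}\mapsto \gamma^i_j$, are continuous. Hence $\tau_i$ is a continuous local section of the sequential endpoint fibration $e_n\colon X^I\to X^n$ over $U_i$. Since the $U_i$ cover $X^n$, this gives $\TC_n(X)\leq k=\vv{\ct}(X^n)$.

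The main obstacle is bookkeeping rather than any deep point. First, the convention for $\TC_n$ must match the cover used here. With Rudyak's definition via open covers, I would invoke the standard fact (Farber, Rudyak) that for ENRs, covers by arbitrary ENR subsets with local sections give the same number as open covers. Alternatively, I would simply adopt the ENR-cover formulation, as the paper implicitly does in Propositions~\ref{comparision} and~\ref{Comparision_TC_DirectedTC}. Second, $\vv{\ct}$ is defined with respect to a chosen base point. One must check that choosing $\mathbf{x}_0$ is legitimate, or at least that it is the choice for which $d_0(X^n)\to X^n$ is surjective. The initial-point hypothesis is exactly what guarantees this surjectivity.
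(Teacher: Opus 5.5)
Your proposal is correct and follows essentially the same route as the paper's proof. Both take the base point $(x_0,\dots,x_0)$ so that $d_0(X^n)=(d_0X)^n$, and both build the local section $\gamma_1^{-1}*\gamma_2*\gamma_2^{-1}*\cdots*\gamma_{n-1}^{-1}*\gamma_n$ with each block $\gamma_j^{-1}*\gamma_{j+1}$ placed on $[\tfrac{j-1}{n-1},\tfrac{j}{n-1}]$. Your remarks on the ENR-versus-open-cover convention and on the choice of base point are bookkeeping that the paper leaves implicit.
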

\begin{proof}
Let $U\subset X^n$ be an ENR with a section $s\colon U\to d_0X^n$ of the map $e_{X^n}\colon d_0 X^n\to X^n$. 
Since $X^n$ has initial point $(x_0,\ldots,x_0)$, $d_0 X^n= (d_0X)^n$. 
Then define $s'\colon U\to PX$ by 
\[s'(x_1,\ldots,x_n):= \gamma_1^{-1}*\gamma_2*\gamma_2^{-1}*\cdots*\gamma_{n-1}*\gamma_{n-1}^{-1}*\gamma_n,\]
where $\gamma_i= pr_i\circ s(x_1,\ldots,x_n)$.
Note that 
\[s'(x_1,\cdots,x_n)(t)=\begin{cases}
    \gamma_{i+1}^{-1}(2(n-1)t-2i) & \text{if } t\in [\frac{i}{n-1}, \frac{2i+1}{2(n-1)}],\\
    \gamma_{i+2}(2(n-1)t-(2i+1)) & \text{if } t\in [\frac{2i+1}{2(n-1)},\frac{i+1}{n-1}],
\end{cases}\]
and so $s'(x_1,\ldots,x_n)(\frac{i}{n-1})=x
_{i+1}$ for each $i=0,\ldots,n-1$. 
Hence it defines a section of the free path space fibration $\pi_n\colon PX\to X^n$. 
Now applying the same argument for the ENRs covering $X^n$, the result follows.
\end{proof}
\begin{remark}
  Propositions~\ref{Properties_of_higher_TC},~\ref{Properties_of_higher_TC_1} recover \cite[Proposition 3.6 (1),(2)]{SDNDAS2025}.  
\end{remark}

%\subsection{$n$-regular $d$-space} In this section, we consider a sequential analogue of regular d-spaces and study sequential directed topological complexity over such spaces.
%\begin{definition}[$n$-Regular $d$-space]\label{n-regular d-space}
 %  A $d$-space $(X,dX)$ is called \emph{$n$-regular} if there exists a partition 
 %  \[\Gamma_{X}^n=\bigcup_{i=1}^k U_i,~ k=\vv{\TC}_n(X)\]
  % into ENRs such that the map $\vv{\pi}_n\colon dX\to X^n$ admits a continuous section over each $U_i$, and moreover, the sets $\bigcup\limits_{i=1}^r U_i$ are closed for $r=1,2,\dots,k$.
%\end{definition}
%\begin{remark}\label{n-regular_feature}
%In an $n$-regular $d$-space with ENRs $\{U_i\}_{i=1}^k$, we have $\Bar{U_i} \cap U_j = \emptyset$, whenever  $i < j$.
%\end{remark}
\subsection{On product of directed spaces} \label{product d-spaces}
We now recall the standard directed structure on products of $d$-spaces and establish the associated sequential topological complexity results.

Given $d$-spaces $(X,dX)$ and $(Y,dY)$, their cartesian product $X\times Y$ carries a natural $d$-space structure in which a path
\[
\gamma\colon [0,1]\to X\times Y,
\qquad
\gamma(t)=(\gamma_X(t),\gamma_Y(t)),
\]
is directed when $\gamma_X\in dX$ and $\gamma_Y\in dY$.
%The cartesian product of $d$-spaces $(X,dX)$ and $(Y,dY)$ has a natural $d$-space structure. Any path $\gamma\colon [0,1]\to X\times Y$ has the form $\gamma(t)=(\gamma_X(t),\gamma_Y(t))$ and we declare $\gamma$ to be directed if $\gamma_X\in dX$ and $\gamma_Y\in dY$. 
\begin{lemma}\label{Gamma_product}
   For $d$-spaces $(X,dX)$ and $(Y,dY)$, we get $\Gamma_{X\times Y}^n= \Gamma_X^n \times \Gamma_Y^n$.
\end{lemma}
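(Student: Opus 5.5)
We prove the equality by showing both inclusions, after first fixing an identification. The space $(X\times Y)^n$ is identified with $X^n\times Y^n$ via the homeomorphism $((x_1,y_1),\dots,(x_n,y_n))\mapsto((x_1,\dots,x_n),(y_1,\dots,y_n))$. The statement is to be read under this identification, and both inclusions reduce to unwinding the definition of $\Gamma^n$ together with the product $d$-structure.

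For the inclusion $\Gamma_{X\times Y}^n\subseteq \Gamma_X^n\times\Gamma_Y^n$, take a point $((x_1,y_1),\dots,(x_n,y_n))\in\Gamma_{X\times Y}^n$. By definition there is a directed path $\gamma=(\gamma_X,\gamma_Y)\in d(X\times Y)$ with $\gamma(\tfrac{k}{n-1})=(x_{k+1},y_{k+1})$ for $k=0,\dots,n-1$. By the definition of the product $d$-structure we have $\gamma_X\in dX$ and $\gamma_Y\in dY$. Evaluating the components at $\tfrac{k}{n-1}$ gives $(x_1,\dots,x_n)\in\Gamma_X^n$ and $(y_1,\dots,y_n)\in\Gamma_Y^n$. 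Equivalently, the projections are $d$-maps that commute with $\vv{\pi}_n$.

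For the reverse inclusion, take $(x_1,\dots,x_n)\in\Gamma_X^n$ and $(y_1,\dots,y_n)\in\Gamma_Y^n$. These are witnessed by $\alpha\in dX$ and $\beta\in dY$ with $\alpha(\tfrac{k}{n-1})=x_{k+1}$ and $\beta(\tfrac{k}{n-1})=y_{k+1}$. The path $\gamma(t)=(\alpha(t),\beta(t))$ is continuous into $X\times Y$. Its components are directed, so $\gamma\in d(X\times Y)$, and $\gamma(\tfrac{k}{n-1})=(x_{k+1},y_{k+1})$. Hence the point lies in $\Gamma_{X\times Y}^n$.

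The only point that needs care is that both witnesses must be parametrized so that they hit their prescribed points at the same times $\tfrac{k}{n-1}$. This holds automatically here, because the definition of $\Gamma^n$ fixes these sampling times; so no reparametrization is needed and there is no real obstacle. We also note that the topology agrees under the identification, since both sides carry the subspace topology of the same product.
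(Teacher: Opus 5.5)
Your proof is correct and matches the paper's argument. For the forward inclusion, the components $pr_1\circ\gamma$ and $pr_2\circ\gamma$ of a directed path in $X\times Y$ are directed; for the reverse inclusion, witnessing paths $\gamma_X\in dX$ and $\gamma_Y\in dY$ are paired into $(\gamma_X,\gamma_Y)\in d(X\times Y)$, with the fixed sampling times $\tfrac{k}{n-1}$ making any reparametrization unnecessary, and your explicit identification of $(X\times Y)^n$ with $X^n\times Y^n$ just makes precise what the paper leaves implicit.
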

\begin{proof}
    Let $\left((x_0,y_0),(x_1,y_1),...,(x_{n-1},y_{n-1})\right)\in \Gamma_{X\times Y}^n$. Then there exists $\gamma\in d(X\times Y)$ such that $\gamma(\frac{k}{n-1})=(x_k,y_k)$ for each $k=0,1,...,n-1$. 

    Now note that \( pr_1 \circ \gamma \in dX \) and \( pr_2 \circ \gamma \in dY \), where
$(pr_1 \circ \gamma)\left(\tfrac{k}{n-1}\right) = x_k$ and $(pr_2 \circ \gamma)\left(\tfrac{k}{n-1}\right) = y_k$ for each $k = 0, 1, \ldots, n-1$.
This implies that $\left((x_0,x_1,...,x_{n-1}), (y_0,y_1,...,y_{n-1})\right)\in \Gamma_X^n\times \Gamma_Y^n.$

Suppose $\left((x_0,x_1,...,x_{n-1}), (y_0,y_1,...,y_{n-1})\right)\in \Gamma_X^n\times \Gamma_Y^n$. Then there exist $\gamma_X\in dX$ and $\gamma_Y\in dY$ such that $\gamma_X(\tfrac{k}{n-1})=x_k,~\gamma_Y(\tfrac{k}{n-1})=y_k$ for each $k=0,1,...,n-1$. Then $\gamma(t)=(\gamma_X(t),\gamma_Y(t))$ is a directed path in $X\times Y$ and $\gamma(\frac{k}{n-1})=(x_k,y_k)$ for $k=0,1,...,n-1$. Therefore, $\left((x_0,y_0),(x_1,y_1),...,(x_{n-1},y_{n-1})\right)\in \Gamma_{X\times Y}^n$. This completes the proof.
\end{proof}
\begin{proposition}\label{TC_Product}
For $n$-regular $d$-spaces $(X_i,dX_i)$ with $i=1,\dots,m$, we obtain the inequality
\[\vv{\TC}_n(X_1\times \dots\times X_m)\leq \sum_{i=1}^m \left(\vv{\TC}_n(X_i)-1\right)+1.\]
 %  $$\vv{\TC}_n(X\times Y)\leq \left((\vv{\TC}_n(X)-1)+(\vv{\TC}_n(Y)-1)\right)+1 = \left(\vv{\TC}_n(X)+\vv{\TC}_n(Y)\right)-1.$$   
\end{proposition}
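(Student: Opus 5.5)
We reduce to the case $m=2$ and then induct on $m$. The inductive step needs a product of $n$-regular $d$-spaces to be $n$-regular again. We expect the construction below to give this automatically, since the union of the first $r$ new pieces will be closed. For two factors the target inequality is $\vv{\TC}_n(X\times Y)\leq \vv{\TC}_n(X)+\vv{\TC}_n(Y)-1$. The approach is the standard ``diagonal'' combination of partitions used for regular $d$-spaces in \cite[Section 4]{EGMFAS2020}, transported to the sequential setting via Lemma~\ref{Gamma_product}.

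Let $k=\vv{\TC}_n(X)$ and $l=\vv{\TC}_n(Y)$. By $n$-regularity choose partitions $\Gamma_X^n=U_1\sqcup\cdots\sqcup U_k$ and $\Gamma_Y^n=V_1\sqcup\cdots\sqcup V_l$ into ENRs. These come with continuous sections $s_i\colon U_i\to dX$ and $t_j\colon V_j\to dY$ of $\vv{\pi}_n$, and the sets $\bigcup_{i\le r}U_i$ and $\bigcup_{j\le r}V_j$ are closed. By Lemma~\ref{Gamma_product} we have $\Gamma_{X\times Y}^n=\Gamma_X^n\times\Gamma_Y^n$. For $2\le q\le k+l$ set
\[
W_q=\bigcup_{i+j=q}U_i\times V_j .
\]
These $k+l-1$ sets are pairwise disjoint and cover $\Gamma_{X\times Y}^n$. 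On $U_i\times V_j$ we take the section $(\mathbf{x},\mathbf{y})\mapsto (s_i(\mathbf{x}),t_j(\mathbf{y}))$. This is a directed path in $X\times Y$ by the product $d$-structure, it is continuous, and it hits $(x_{k+1},y_{k+1})$ at time $k/(n-1)$, so it is a local section of $\vv{\pi}_n$. Note that no reparametrisation is needed, because both factors use the same sample times $k/(n-1)$.

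The main obstacle is showing that the section assembled on $W_q$ is continuous and that $W_q$ is an ENR. We must show that the pieces $U_i\times V_j$ with $i+j=q$ are mutually separated inside $W_q$. Take $i+j=i'+j'=q$ with $i<i'$, so $j>j'$. By Remark~\ref{n-regular_feature}, $\overline{U_i}\cap U_{i'}=\emptyset$ and $\overline{V_{j'}}\cap V_j=\emptyset$. Hence $\overline{U_i\times V_j}\cap (U_{i'}\times V_{j'})\subseteq (\overline{U_i}\cap U_{i'})\times(\cdots)=\emptyset$, and symmetrically $\overline{U_{i'}\times V_{j'}}\cap(U_i\times V_j)=\emptyset$. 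Therefore each $U_i\times V_j$ is clopen in $W_q$. The section defined piecewise is then continuous on $W_q$. Moreover $W_q$ is a finite disjoint union of clopen ENRs, using that a product of ENRs is an ENR, so $W_q$ is itself an ENR. This gives $\vv{\TC}_n(X\times Y)\le k+l-1$.

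For the induction we check that $\bigcup_{q\le r}W_q=\bigcup_{i+j\le r}U_i\times V_j$ is closed. This union equals $\bigcup_{i}\bigl(\bigcup_{i'\le i}U_{i'}\bigr)\times\bigl(\bigcup_{j\le r-i}V_j\bigr)$, a finite union of products of closed sets. The only remaining caveat is that $n$-regularity as defined demands exactly $\vv{\TC}_n$ pieces. If $\vv{\TC}_n(X\times Y)<k+l-1$, the product may therefore fail to be literally $n$-regular. We would handle this by running the argument directly with $m$ factors, taking $W_q=\bigcup_{i_1+\cdots+i_m=q}U^{(1)}_{i_1}\times\cdots\times U^{(m)}_{i_m}$. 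The same separation argument applies: if two index tuples have equal sum, some coordinate of the first is strictly smaller than the corresponding coordinate of the second. This yields the bound $\sum_{i=1}^m(\vv{\TC}_n(X_i)-1)+1$ without needing regularity of intermediate products.
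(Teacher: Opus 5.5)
Your proof is correct and follows the paper's argument. Both use the diagonal unions $W_q=\bigcup_{i+j=q}U_i\times V_j$ over $\Gamma^n_{X\times Y}=\Gamma^n_X\times\Gamma^n_Y$, with the pieces separated inside $W_q$ via Remark~\ref{n-regular_feature}. Your direct $m$-factor version is a worthwhile refinement: the paper's ``repeated application of the case $m=2$'' tacitly requires the intermediate products to be $n$-regular, and as you note, the definition (which demands exactly $\vv{\TC}_n$ pieces) does not guarantee this.
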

\begin{proof}
The general statement follows by repeated application of the case \(m=2\). Hence, it is enough to prove
$\vv{\TC}_n(X\times Y)\leq \vv{\TC}_n(X)+\vv{\TC}_n(Y)-1$.\\
  Suppose \( \vv{\TC}_n(X) = k+1 \) and \( \vv{\TC}_n(Y) = l+1 \). Then there exist partitions
\[
\Gamma_X^n = \bigcup_{i=0}^k F_i \quad \text{and} \quad \Gamma_Y^n = \bigcup_{i=0}^l G_i,
\]
where each \( F_i \) and \( G_i \) is an ENR, and for each \( r = 0,1, \ldots, k \) and \( s = 0,1, \ldots, l \), the unions \( \bigcup\limits_{i=0}^k F_i \) and \( \bigcup\limits_{i=0}^l G_i \) are closed. 
Moreover, there exist continuous sections $s_i$ over $F_i$ and $t_i$ over $G_i$ of $\vv{\pi}_n^X\colon dX\to X^n$ and $\vv{\pi}_n^Y\colon dY\to Y^n$, respectively. 

Since \( \Gamma_{X \times Y}^n = \Gamma_X^n \times \Gamma_Y^n \) by Lemma~\ref{Gamma_product}, the sets
$\{ F_i \times G_j \mid 0 \leq i \leq k,\ 0 \leq j \leq l \}$
form an ENR partition of \( \Gamma_{X \times Y}^n \). 
Moreover, the continuous sections $s_i\colon F_i\to dX$ and $t_j\colon G_j\to dY$ produce continuous section $\sigma_{ij}=s_i\times t_j\colon F_i\times G_j\to d(X\times Y)$ of $\vv{\pi}_n^{X\times Y}\colon d(X\times Y)\to (X\times Y)^n$ over $F_i\times G_j$.

 Consider the sets 
  \begin{equation}\label{open_set}
        \bigcup_{i+j=t} (F_i\times G_j)= W_t\subseteq \Gamma_{X\times Y}^n 
  \end{equation}
with $t=0,1,...,k+l$. 
We note that the sets appearing in the union \eqref{open_set} are open in \(W_t\) and pairwise disjoint, by Remark \ref{n-regular_feature}. Consequently, the family of continuous maps \(\sigma_{ij}\) determines a continuous section \(W_t \to d(X\times Y)\). Therefore, 
  $\vv{\TC}_n(X\times Y)\leq (k+l)+1.$ This completes the proof.
%-----------------------------------------------------------------------
%   Suppose \( \vv{\TC}_n(X) = k \) and \( \vv{\TC}_n(Y) = l \). Then there exist partitions
%\[
%\Gamma_X^n = \bigcup_{i=1}^k F_i \quad \text{and} \quad \Gamma_Y^n = \bigcup_{i=1}^l G_i,
%\]
%where each \( F_i \) and \( G_i \) is an ENR, and for each \( r = 1, 2, \ldots, k \) and \( s = 1, 2, \ldots, l \), the unions \( \bigcup_{i=1}^r F_i \) and \( \bigcup_{i=1}^s G_i \) are closed. Moreover, there exist continuous sections $s_i$ over $F_i$ and $t_i$ over $G_i$ of $\vv{\pi}_n^X: dX\to X^n$ and $\vv{\pi}_n^Y: dY\to Y^n$, respectively. 
%Since \( \Gamma_{X \times Y}^n = \Gamma_X^n \times \Gamma_Y^n \), the sets
%\[
%\{ F_i \times G_j \mid 1 \leq i \leq k,\ 1 \leq j \leq l \}
%\]
%form an ENR partition of \( \Gamma_{X \times Y}^n \). Moreover, the continuous sections $s_i: F_i\to dX$ and $t_j: G_j\to dY$ produce continuous section 
%  $$\sigma_{ij}=s_i\times t_j: F_i\times G_j\to d(X\times Y)$$ of $\vv{\pi}_n^{X\times Y}: d(X\times Y)\to (X\times Y)^n$ over $F_i\times G_j$.
%  Consider the sets 
 % \begin{equation}\label{open_set}
  %      \bigcup_{i+j=t} (F_i\times G_j)= W_t\subseteq \Gamma_{X\times Y}^n 
 % \end{equation}
%with $t=2,...,k+l$. We note that the sets appearing in the union \eqref{open_set} are open in \(W_t\) and pairwise disjoint, by Remark \ref{n-regular_feature}. Consequently, the family of continuous maps \(\sigma_{ij}\) determines a continuous section \(W_t \to d(X\times Y)\). Therefore, 
  %$$\vv{\TC}_n(X\times Y)\leq (k+l)-1.$$ This completes the proof.
\end{proof}
\begin{corollary} \label{directed torus}
Let $\vv{\mathbb{S}^1}$ denote the directed circle considered in Proposition \ref{directed_circle}. Then, for the associated directed torus $(\vv{\mathbb{S}^1})^k$, we have
$\vv{\TC}_n\big((\vv{\mathbb{S}^1})^k\big)\leq k(n-1)+1$.
\end{corollary}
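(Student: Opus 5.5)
The bound follows from the product inequality of Proposition~\ref{TC_Product} with $m=k$ and $X_1=\cdots=X_k=\vv{\mathbb{S}^1}$, combined with the value $\vv{\TC}_n(\vv{\mathbb{S}^1})=n$ from Proposition~\ref{directed_circle}. Substituting,
\[
\vv{\TC}_n\big((\vv{\mathbb{S}^1})^k\big)\leq \sum_{i=1}^k\big(\vv{\TC}_n(\vv{\mathbb{S}^1})-1\big)+1 = k(n-1)+1 .
\]
The product $d$-structure on $(\vv{\mathbb{S}^1})^k$ is the one described before Lemma~\ref{Gamma_product}, so Lemma~\ref{Gamma_product} gives $\Gamma^n_{(\vv{\mathbb{S}^1})^k}=(\Gamma^n_{\vv{\mathbb{S}^1}})^k$. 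This is exactly the setting of Proposition~\ref{TC_Product}.

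The only hypothesis to check is that $\vv{\mathbb{S}^1}$ is $n$-regular (Definition~\ref{n-regular d-space}). That is, we need an ENR partition of $\Gamma^n_{\vv{\mathbb{S}^1}}$ into exactly $n$ pieces, each carrying a continuous section, whose initial unions are closed. I would not use the cover $F_1,\dots,F_n$ from the proof of Proposition~\ref{directed_circle} directly, since it is not even disjoint. Instead I would reorder and shrink it to a filtration by closed sets. Write $u_m=(b,\dots,b,e,\dots,e)$ with $m$ copies of $b$, for $1\le m\le n-1$. Set
\[
U_1=\Gamma^n_{I_+},\qquad U_{j}=\{u_{j-1}\}\quad(2\le j\le n-1),\qquad U_n=\Gamma^n_{I_-}\setminus\big(\Gamma^n_{I_+}\cup\{u_{n-1}\}\big)\cup\{u_{n-1}\}.
\]
A cleaner variant, which I expect works, is the following. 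Take $U_1$ to be the closed set $\Gamma^n_{I_+}$, on which the $I_+$-section is continuous. Then $\Gamma^n_{I_-}\setminus\Gamma^n_{I_+}$ remains; since the diagonal points lie in $\Gamma^n_{I_+}$, this set is $\Gamma^n_{I_-}$ with the finite set $B\cup\{(b,\dots,b),(e,\dots,e)\}$ removed. Its closure adds back only points of $B$. So distribute the removed points $u_1,\dots,u_{n-1}$ as isolated points (each a closed ENR with a trivially continuous section), and put them first in the ordering. The concrete filtration is then
\[
U_j=\{u_j\}\ (1\le j\le n-2),\qquad U_{n-1}=\Gamma^n_{I_+}\setminus\{u_1,\dots,u_{n-2}\},\qquad U_n=\Gamma^n_{I_-}\setminus\Gamma^n_{I_+}.
\]
Each initial union $U_1\cup\cdots\cup U_r$ is a finite set for $r\le n-2$, the closed set $\Gamma^n_{I_+}$ for $r=n-1$, and everything for $r=n$. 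Each piece admits a continuous section: constant paths or $I_+$-paths on the first $n-1$ pieces, and the $I_-$-interpolation on $U_n$.

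The main obstacle is this regularity verification. In particular, the count must be exactly $n=\vv{\TC}_n(\vv{\mathbb{S}^1})$, as Definition~\ref{n-regular d-space} demands, and each piece must be an ENR. The first pieces are points, and $\Gamma^n_{I_+}$ is a compact polyhedron minus finitely many points, so both are ENRs. The last piece, $\Gamma^n_{I_-}$ minus a finite set, is a locally compact subset of a polyhedron and hence also an ENR. The case $n=2$ reduces to the regularity of the directed circle established in \cite{EGMFAS2020}. Once regularity is granted, the corollary follows immediately from Proposition~\ref{TC_Product}.
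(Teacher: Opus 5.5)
Your route is the one the paper intends. The corollary is stated without proof as an immediate consequence of Proposition~\ref{TC_Product} and Proposition~\ref{directed_circle}, and the $n$-regularity of $\vv{\mathbb{S}^1}$ is never checked there, so you are right that this is where the work lies. The weak point is the count you impose. You insist on exactly $n=\vv{\TC}_n(\vv{\mathbb{S}^1})$ pieces, but your own pieces refute that value for $n\geq 3$. The two sets $\Gamma^n_{I_+}$ (compact) and $\Gamma^n_{I_-}\setminus\Gamma^n_{I_+}$ (the simplex $\Gamma^n_{I_-}$ minus its $n+1$ vertices $(b,\dots,b,e,\dots,e)$) already partition $\Gamma^n_{\vv{\mathbb{S}^1}}$. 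The $I_+$- and $I_-$-interpolations are continuous on them, so $\vv{\TC}_n(\vv{\mathbb{S}^1})\leq 2$. No global continuous section exists: its value at $u_1$ would be a limit of paths in $I_+$ and of paths in $I_-$, hence a path in $\{b,e\}$ from $b$ to $e$, which is impossible. So in fact $\vv{\TC}_n(\vv{\mathbb{S}^1})=2$ for every $n\geq 2$. The lower-bound argument in Proposition~\ref{directed_circle} breaks because a piece like $\Gamma^n_{I_+}$ contains every $u_k$ without containing a neighbourhood of it, so one branch serves all of them. Consequently the input $\vv{\TC}_n(\vv{\mathbb{S}^1})=n$ in your displayed computation is false, and your singletons $\{u_j\}$ are mere padding. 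Your $n$-piece filtration therefore does not witness $n$-regularity in the sense of Definition~\ref{n-regular d-space}, which ties the number of pieces to $\vv{\TC}_n(X)$.

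The statement itself survives, and there are two short repairs.

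\emph{Two-piece filtration.} Take $U_1=\Gamma^n_{I_+}$ and $U_2=\Gamma^n_{I_-}\setminus\Gamma^n_{I_+}$. Since $U_1$ is closed, $\vv{\mathbb{S}^1}$ is $n$-regular with $\vv{\TC}_n=2$, and Proposition~\ref{TC_Product} gives $\vv{\TC}_n\big((\vv{\mathbb{S}^1})^k\big)\leq k+1\leq k(n-1)+1$.

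\emph{Non-minimal filtrations.} The proof of Proposition~\ref{TC_Product} never uses minimality. Filtered sectional ENR partitions with $a$ and $b$ pieces yield one with $a+b-1$ pieces. Its initial unions $\bigcup_{i+j\leq t}F_i\times G_j=\bigcup_{i}(F_0\cup\dots\cup F_i)\times(G_0\cup\dots\cup G_{t-i})$ (indexing as in that proof) are again closed, so the construction iterates over $k$ factors. This also covers the ``repeated application'' step there, since partial products need not be $n$-regular in the strict sense. Feeding in any filtered partition with at most $n$ pieces, yours included, gives $k(n-1)+1$.

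Minor points:
\begin{itemize}
\item No constant path lies over $u_j$, so choose, say, the $I_+$-path there.
\item ``Locally compact subset of a polyhedron'' does not imply ENR; a Cantor set is a counterexample. The correct reason is that $\Gamma^n_{I_-}\setminus\Gamma^n_{I_+}$ is open in the simplex $\Gamma^n_{I_-}$.
\item Your first displayed attempt is not a partition, since $\Gamma^n_{I_+}$ already contains every $u_j$.
\end{itemize}
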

%\begin{corollary} \label{directed torus}
%In Proposition \ref{directed_circle}, we consider  the directed circle $\vv{\mathbb{S}^1}$ and computed $\vv{\TC}_n(\vv{\mathbb{S}^1})$. Then the associated directed torus $(\vv{\mathbb{S}^1})^k$ satisfies $\vv{\TC}_n((\vv{\mathbb{S}^1})^k)\leq k(n-1)+1$.
%\end{corollary}
%Next we consider the directed loop space $\mathbb{O}^1$ which is defined as the unit circle $\mathbb{S}^1$ with $d$-structure as follows. Any continuous path $\gamma\colon [0,1]\to \mathbb{S}^1$ can be written as $\gamma(t)=\mathrm{exp}(\iota\phi(t))$, where $\phi\colon [0,1]\to \mathbb{R}$ is defined uniquely up to adding an integer multiple of $\pm 2\pi$. We call the path $\gamma$ is positive if the function $\phi(t)$ is non-decresing.

We next consider the directed loop $\mathbb{O}^1$, defined as the unit circle $\mathbb{S}^1$ equipped with the following $d$-structure. Let $\gamma\colon [0,1]\to \mathbb{S}^1$ be a continuous path. Then $\gamma$ may be expressed as
\[
\gamma(t)=\exp(i\phi(t)),
\]
where $\phi\colon [0,1]\to \mathbb{R}$ is determined up to addition by an integral multiple of $\pm 2\pi$. We say that $\gamma$ is directed if the function $\phi$ is non-decreasing. We now compute the sequential directed topological complexity of the directed circle $\mathbb{O}^1$.
\begin{proposition}\label{TC_O^1}
    %Let $\mathbb{O}^1$ be the directed loop. Then 
    For the directed loop $\mathbb{O}^1$, we have
   $\vv{\TC}_n(\mathbb{O}^1)=n$.
\end{proposition}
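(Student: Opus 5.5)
The plan is to prove the two inequalities $\vv{\TC}_n(\mathbb{O}^1)\geq n$ and $\vv{\TC}_n(\mathbb{O}^1)\leq n$ separately. The lower bound will come from comparison with the classical invariant, and the upper bound from an explicit partition into $n$ pieces, indexed by the pattern of coincidences among consecutive points.

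\emph{Lower bound.} First, $\mathbb{O}^1$ is strongly connected. Given $x,x'\in\mathbb{S}^1$, write $x=e^{i\theta}$ and $x'=e^{i\theta'}$ with $\theta\leq\theta'<\theta+2\pi$. Then $t\mapsto e^{i((1-t)\theta+t\theta')}$ is a $d$-path from $x$ to $x'$. Proposition~\ref{Comparision_TC_DirectedTC} therefore gives $\TC_n(\mathbb{S}^1)\leq \vv{\TC}_n(\mathbb{O}^1)$. Its proof also shows $\Gamma^n_{\mathbb{O}^1}=(\mathbb{S}^1)^n$. Since $\TC_n(\mathbb{S}^1)=n$ by \cite[\S~4]{RUD2010}, we get $\vv{\TC}_n(\mathbb{O}^1)\geq n$.

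\emph{Upper bound.} For each $j=1,\dots,n-1$, define the counterclockwise angle $\alpha_j(\mathbf{x})\in[0,2\pi)$ from $x_j$ to $x_{j+1}$. For $\mathbf{x}=(x_1,\dots,x_n)\in(\mathbb{S}^1)^n$, let $S(\mathbf{x})=\{j\mid x_j=x_{j+1}\}$. For $m=0,1,\dots,n-1$, set
\[
F_m=\{\mathbf{x}\in(\mathbb{S}^1)^n \mid |S(\mathbf{x})|=m\}.
\]
These $n$ sets are pairwise disjoint and cover $(\mathbb{S}^1)^n=\Gamma^n_{\mathbb{O}^1}$. On each $F_m$, define $s(\mathbf{x})$ as the concatenation, with the $j$-th piece run on $[\frac{j-1}{n-1},\frac{j}{n-1}]$, of the paths $e^{i(\theta_j+\alpha_j(\mathbf{x})\tau)}$ for $\tau\in[0,1]$, where $x_j=e^{i\theta_j}$. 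Each piece has non-decreasing argument, so it is directed, and $\vv{\pi}_n\circ s=\mathrm{Id}$.

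Continuity of $s$ on $F_m$ is the key point. The function $\alpha_j$ is continuous on the open set $\{x_j\neq x_{j+1}\}$, and it is identically $0$ on $\{x_j=x_{j+1}\}$; its only discontinuity is the jump from near $2\pi$ down to $0$ as $x_{j+1}$ passes $x_j$. Fix $\mathbf{x}\in F_m$. Because equalities are closed conditions, every $\mathbf{y}$ near $\mathbf{x}$ satisfies $S(\mathbf{y})\subseteq S(\mathbf{x})$. If moreover $\mathbf{y}\in F_m$, then $|S(\mathbf{y})|=m=|S(\mathbf{x})|$, so $S(\mathbf{y})=S(\mathbf{x})$. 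Hence $S$ is locally constant on $F_m$. Each $\alpha_j$ is then locally either continuous or identically zero on $F_m$, so $s|_{F_m}$ is continuous.

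Finally, each $F_m$ must be an ENR. I expect this to be the only technical check. Each $F_m$ is a finite union of locally closed real semialgebraic subsets of the torus $(\mathbb{S}^1)^n\subset\mathbb{R}^{2n}$, namely sets where prescribed coordinates coincide and the others differ. Such a locally compact semialgebraic set is an ENR. Alternatively, $F_m$ is locally a finite disjoint union of open subsets of diagonal subtori, hence a topological manifold. This gives $\vv{\TC}_n(\mathbb{O}^1)\leq n$, and together with the lower bound, $\vv{\TC}_n(\mathbb{O}^1)=n$.
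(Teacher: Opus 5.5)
Your proof is correct. The lower bound is the paper's own argument: strong connectivity, Proposition~\ref{Comparision_TC_DirectedTC}, and $\TC_n(\mathbb{S}^1)=n$. You verify strong connectivity by hand rather than citing it.

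The upper bound is where you genuinely differ. The paper uses the diagonal $F=\{z_1=\cdots=z_n\}$ together with the sets $F_k=\{z_k\neq z_{k+1},\ z_{k+1}=\cdots=z_n\}$. It then builds sections by choosing angles $\theta_1\le\theta_2\le\cdots\le\theta_n$. You instead partition by the number $|S(\mathbf{x})|$ of consecutive coincidences. This makes the coincidence pattern $S$ locally constant on each piece, so every $\alpha_j$ is continuous there.

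The difference is not cosmetic. On the paper's $F_k$ the coordinates $z_1,\dots,z_k$ are unconstrained among themselves. The normalization $\theta_j\le\theta_{j+1}$ therefore jumps whenever $z_{j+1}$ crosses $z_j$ for some $j<k$. In fact, for $n\ge 3$ the set $F_{n-1}=\{z_{n-1}\neq z_n\}$ admits no continuous section at all. It contains the circle $\{(z,-w,\dots,-w,w)\mid z\in\mathbb{S}^1\}$ for fixed $w$. A section over this circle, restricted to its first segment, would be a continuous family of paths from $z$ to the fixed point $-w$. That family would be a null-homotopy of $\mathrm{id}_{\mathbb{S}^1}$, which is impossible.

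So your decomposition (the standard one from the classical computation of $\TC_n(\mathbb{S}^1)$) is what actually establishes $\vv{\TC}_n(\mathbb{O}^1)\le n$ for $n\ge 3$; for $n=2$ the two partitions coincide. Your explicit ENR check also fills a point the paper leaves unverified: each piece is locally closed, and is a disjoint union of open subsets of diagonal subtori.
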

\begin{proof}
    We note from Proposition \ref{Comparision_TC_DirectedTC} and \cite[Page 918]{RUD2010} that $\vv{\TC}_n(\mathbb{O}^1)\geq \TC_n(\mathbb{S}^1)=n.$
The $d$-space $\mathbb{O}^1$ being strongly connected \cite[Example 4]{EGMFAS2020}, $\Gamma_{\mathbb{O}^1}^n= \overbrace{\mathbb{O}^1\times \dots\times \mathbb{O}^1}^{n \text{ times}}$. %Also, we can partition %$$\overbrace{\mathbb{O}^1\times \dots\times \mathbb{O}^1}^{n \text{ times}}= F\cup \bigcup_{i=1}^{n-1} F_i,$$ where
   % \[F=\{(z_1,z_2,\dots,z_n): z_1=z_2=\dots = z_n\},\]
   % and 
   % \[F_i=\{(z_1,\dots,z_i,z_{i+1},\dots,z_n): z_i\neq z_{i+1}\}.\]

%$$\overbrace{\mathbb{O}^1\times \dots\times \mathbb{O}^1}^{n \text{ times}}= \bigcup_{i=1}^{n-1} F_i\cup F,$$ 
For $1\leq k\leq n-1$, we consider
\[
F_k=\{(z_1,\dots,z_n)\in (\mathbb{O}^1)^n \mid
z_k\neq z_{k+1},\ 
z_{k+1}=z_{k+2}=\cdots = z_n\},
\]
and 
\[
F=\{(z_1,\dots,z_n)\in (\mathbb{O}^1)^n \mid
z_1=z_2=\cdots=z_n\}.
\]

We show that $(\mathbb{O}^1)^n = F \cup F_1 \cup \cdots \cup F_{n-1}.$

Suppose $(z_1,\dots,z_n)\in (\mathbb{O}^1)^n$. If $z_1=z_2=\cdots=z_n$, then $(z_1,\dots,z_n)\in F$. Otherwise, let $k$ be the largest index such that $z_k\neq z_{k+1}$.
By maximality of $k$, $z_{k+1}=z_{k+2}=\cdots=z_n$. 
Hence $(z_1,\dots,z_n)\in F_k$. This proves our claim.

Now note that we can obtain a section of $\vv{\pi}_n \colon d\mathbb{O}^1 \longrightarrow  (\mathbb{O}^1)^n$ over \(F\) by assigning, to each point \((z,z,\dots,z)\in F\), the constant path at \(z\).

For each $(z_1,\dots,z_n)\in F_k$, choose angles $z_j=e^{i\theta_j}$, such that $\theta_1\le \theta_2\le \cdots \le \theta_n$, with $\theta_k<\theta_{k+1}$.

Define $s_k: F_k\longrightarrow d\mathbb{O}^1$ by
$s_k(z_1,\dots,z_n)(t)
=
\exp(i\Phi(t))$,
where \(\Phi\colon [0,1]\to \mathbb{R}\) is the piecewise linear map satisfying
$\Phi\!\left(\tfrac{j-1}{n-1}\right)=\theta_j, ~j=1,\dots,n.$

Since $\theta_1\le \theta_2\le \cdots \le \theta_n$, the function \(\Phi\) is non-decreasing. Hence $s_k(z_1,\dots,z_n)$
is a directed path in \(\mathbb{O}^1\). Moreover,
$\vv{\pi}_n\bigl(s_k(z_1,\dots,z_n)\bigr)
=
(z_1,\dots,z_n).$
Therefore \(s_k\) is a continuous section of \(\vv{\pi}_n\) over \(F_k\).
Hence, we obtain  
$\vv{\TC}_n(\mathbb{O}^1) \leq n$.
\end{proof}
%\begin{corollary} For the directed $k$-dimensional torus $(\mathbb{O}^1)^k$, we obtain
%    $\vv{TC}_n\left((\mathbb{O}^1)^k\right)=k(n-1)+1$.
%\end{corollary}
\begin{corollary} \label{directed k-torus}
The directed sequential topological complexity of the directed \(k\)-dimensional torus \((\mathbb{O}^1)^k\) is given by
$\vv{\TC}_n\bigl((\mathbb{O}^1)^k\bigr)=k(n-1)+1$.
\end{corollary}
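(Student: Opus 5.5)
The corollary is proved by a two-sided estimate: a lower bound from ordinary sequential topological complexity and an upper bound from the product inequality.

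For the lower bound, we first check that $(\mathbb{O}^1)^k$ is strongly connected. By Lemma~\ref{Gamma_product} with $n=2$ and induction on $k$, we have $\Gamma_{(\mathbb{O}^1)^k}=\Gamma_{\mathbb{O}^1}^{\,k}$ after reordering coordinates, and this equals $((\mathbb{O}^1)^2)^k$ because $\mathbb{O}^1$ is strongly connected \cite[Example 4]{EGMFAS2020}. Concretely, given two points of the torus, take in each coordinate a directed path of $\mathbb{O}^1$ joining them; the product path is directed in the product structure. Proposition~\ref{Comparision_TC_DirectedTC} then gives
\[
\vv{\TC}_n\bigl((\mathbb{O}^1)^k\bigr)\ \geq\ \TC_n(T^k),
\]
where $T^k$ is the underlying torus. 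It is standard that $\TC_n(T^k)=k(n-1)+1$ in the unreduced normalization used here. This follows from the cohomological zero-divisor cup-length bound of \cite{RUD2010}, which gives $k(n-1)$ nontrivial zero-divisors, together with the upper bound $\TC_n(T^k)\le \ct(T^{kn})$ for topological groups, giving $kn-k+1$. So the lower bound is $k(n-1)+1$.

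For the upper bound, we apply Proposition~\ref{TC_Product} to $m=k$ copies of $\mathbb{O}^1$ and use Proposition~\ref{TC_O^1}:
\[
\vv{\TC}_n\bigl((\mathbb{O}^1)^k\bigr)\le k(n-1)+1.
\]
This requires $\mathbb{O}^1$ to be $n$-regular, i.e.\ that the partition $F, F_1,\dots,F_{n-1}$ from Proposition~\ref{TC_O^1} can be ordered so that each initial union $\bigcup_{i\le r}U_i$ is closed.

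Checking this $n$-regularity is the main obstacle. The natural ordering is by increasing number of distinct consecutive blocks. Take $U_1=F$, which is the diagonal and hence closed. Then take sets stratified by the number of indices $j$ with $z_j\ne z_{j+1}$, so that the closure of a stratum lies in the earlier strata. With this choice, $\bigcup_{i\le r}U_i=\{\#\{j: z_j\neq z_{j+1}\}\le r-1\}$, which is closed.

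A problem arises because the stratum with $r-1$ jumps need not admit a single continuous section. The chosen lifts $\theta_{j+1}-\theta_j\in[0,2\pi)$ jump when a nonzero difference approaches $2\pi$. The fix is to refine: on each stratum, partition further according to which consecutive differences vanish, and choose lifts with differences in $(0,2\pi)$ for the nonzero ones. This is continuous on each such piece. Pieces with the same number of jumps are mutually separated, since each is relatively open in its stratum, so their sections glue to a continuous section on the whole stratum. This yields $n$ sets with closed initial unions, so $\mathbb{O}^1$ is $n$-regular and the product inequality applies.

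If this regularity check fails, the fallback is to build the cover of $\Gamma^n_{(\mathbb{O}^1)^k}=(T^k)^n$ directly. Use the sets $W_t=\bigcup_{t_1+\dots+t_k=t}\prod_i S^{(i)}_{t_i}$, where $S^{(i)}_{t_i}$ is the stratum of the $i$-th coordinate with $t_i$ jumps. The same separation argument as in the proof of Proposition~\ref{TC_Product} shows each $W_t$ carries a continuous section, and there are $k(n-1)+1$ values of $t$. Combining the two bounds gives equality.
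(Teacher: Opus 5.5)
Your proof is correct, and its skeleton matches the paper's. The lower bound comes from strong connectivity, Proposition~\ref{Comparision_TC_DirectedTC} and $\TC_n(T^k)=k(n-1)+1$; the paper cites \cite{IBJG_2014} where you use the zero-divisor cup-length. The upper bound comes from Proposition~\ref{TC_Product} together with $\vv{\TC}_n(\mathbb{O}^1)=n$.

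Where you differ is the $n$-regularity of $\mathbb{O}^1$. The paper only cites regularity of $\mathbb{O}^1$ from \cite{EGMFAS2020}, which is the $n=2$ notion. Moreover, the cover $F,F_1,\dots,F_{n-1}$ of Proposition~\ref{TC_O^1} cannot serve. For $k\ge 2$, the set $F_k$ contains every point $(z_1,z_2,\dots,z_n)$ with $z_1\in\mathbb{S}^1$ free and the other coordinates fixed. A continuous section over $F_k$ would therefore restrict to a null-homotopy of $\mathrm{id}_{\mathbb{S}^1}$ onto $z_2$, which is impossible.

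Your decomposition by the number of jumps supplies both the continuous sections and the closed initial unions. It rests on two observations: within a stratum, the pieces indexed by different jump sets are mutually separated, and on each piece the path $z_1\exp(i\Psi(t))$ depends continuously on the increments $\arg(z_{j+1}/z_j)\in(0,2\pi)$. This is what actually makes the upper bound rigorous. Your one-step cover $W_t$ also avoids iterating Proposition~\ref{TC_Product}, which formally requires the intermediate products to be $n$-regular.

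One minor slip, harmless because only the lower bound on $\TC_n(T^k)$ is used. For a topological group, $\TC_n(G)\le\ct(G^{n-1})$, so the relevant bound is $\ct(T^{k(n-1)})=k(n-1)+1$; the quantity $\ct(T^{kn})$ you wrote equals $kn+1$.
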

\begin{proof}
 Since $\mathbb{O}^1$ is regular $d$-space \cite[Page 18]{EGMFAS2020}, it follows from Propositions \ref{TC_Product} and \ref{TC_O^1} that $\vv{\TC}_n\left((\mathbb{O}^1)^k\right)\leq k \vv{\TC}_n(\mathbb{O}^1)+1= k(n-1)+1.$

 $\mathbb{O}^1$ being strongly $d$-connected, Proposition \ref{Comparision_TC_DirectedTC} and \cite[Corollary 3.13]{IBJG_2014} implies that $\vv{\TC}_n\left((\mathbb{O}^1)^k\right)\geq \TC_n\left((\mathbb{S}^1)^k\right)=k(n-1)+1.$
\end{proof}

%Using the same idea as \cite[Proposition 3]{EGMFAS2020}, we obtain
%\begin{proposition}
 %   Let $G$ be a directed connected graph. Then $\vv{\TC}_n(G)\leq (n+1)$.
%\end{proposition}
\begin{proposition}\label{Upper_bound_dir_tc_graph}
    Let $G$ be a directed connected graph. Then $\vv{\TC}_n(G)\leq n+1$. 
\end{proposition}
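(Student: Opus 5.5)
We follow the pattern of \cite[Proposition 3]{EGMFAS2020}, which gives $\vv{\TC}(G)=\vv{\TC}_2(G)\leq 3$ by stratifying $\Gamma_G$ according to whether the endpoints are vertices or interior points of edges. Here $G$ is a geometric realisation of a directed graph: a one-dimensional CW complex whose edges carry the directed structure of $\vv{I}$, with $d$-paths being concatenations of directed edge paths. We stratify $\Gamma_G^n$ by how many of the $n$ coordinates lie in the interior of an edge, and show that the strata can be grouped into $n+1$ disjoint ENRs, each carrying a continuous section.

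For $\mathbf{x}=(x_1,\dots,x_n)\in\Gamma_G^n$, let $m(\mathbf{x})\in\{0,1,\dots,n\}$ be the number of indices $j$ with $x_j$ in the interior of some edge. Set $A_m=\{\mathbf{x}\in\Gamma_G^n\mid m(\mathbf{x})=m\}$. These $n+1$ sets are pairwise disjoint and cover $\Gamma_G^n$. Each $A_m$ is a finite disjoint union, indexed by the choice of which coordinates are vertices and which edges contain the others, of pieces homeomorphic to subsets of products of open intervals cut out by order inequalities. Hence each $A_m$ is an ENR, at least when $G$ is finite; we assume this throughout.

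Over each piece we build the section as follows. First fix, once and for all, for every ordered pair of vertices $(v,w)$ with $dG(v,w)\neq\varnothing$, a chosen directed path $\gamma_{v,w}$, with constant paths when $v=w$. For a consecutive pair $(x_j,x_{j+1})$ in a given piece, we take the segment from $x_j$ to $x_{j+1}$ to be:
\begin{itemize}
\item the linear path inside the edge, if both points lie on a common edge with $x_j\le x_{j+1}$ and a directed path staying in that edge is admissible;
\item otherwise, the linear run from $x_j$ to the terminal vertex of its edge, followed by $\gamma_{v,w}$ to the initial vertex of $x_{j+1}$'s edge, followed by the linear run to $x_{j+1}$.
\end{itemize}
These choices depend only on the combinatorial type of the piece. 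Within a piece the interior points vary continuously in open intervals while the vertices and edges stay fixed, so the segments vary continuously. We then reparametrise each segment onto $[\tfrac{j-1}{n-1},\tfrac{j}{n-1}]$, as in the example of $\vv{I}$ above. This gives a continuous section of $\vv{\pi}_n$ over each piece.

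Because the pieces making up a fixed $A_m$ are open and closed in $A_m$, the sections glue to a continuous section over all of $A_m$. This gives $\vv{\TC}_n(G)\le n+1$.

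The main obstacle is checking this last step: that the pieces of $A_m$ really are mutually separated in $A_m$. Two points with the same count $m$ cannot converge to one another across combinatorial types, because a limit of interior points on a fixed edge that reaches a vertex lowers $m$. So within $A_m$ the index data (which coordinates are vertices, which edges contain the others) is locally constant. A second concern is whether the ``same edge, $x_j\le x_{j+1}$'' condition versus the detour through vertices is also locally constant on a piece. We handle this by refining the pieces by this order data, which is an open condition within $A_m$ since equality $x_j=x_{j+1}$ on an edge interior can be treated as the degenerate linear case. We also rely on the combinatorial description of $d$-paths in $G$ to ensure that every $\mathbf{x}\in\Gamma_G^n$ is actually realised by our construction.
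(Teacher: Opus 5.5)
Your stratification by the number of edge-interior coordinates is the decomposition the paper intends; its proof only cites the argument of \cite[Proposition 3]{EGMFAS2020}. Your argument that the combinatorial pieces of $A_m$ (which coordinates are which vertices, which open edges contain the rest) are open and closed in $A_m$ is correct. The gap is in the local rule for a consecutive pair $x_j,x_{j+1}$ lying in the interior of the same edge $e$. You take the straight segment when $x_j\le x_{j+1}$ and the detour $[x_j\to t(e)]*\gamma_{t(e),s(e)}*[s(e)\to x_{j+1}]$ when $x_j>x_{j+1}$.

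This rule is discontinuous along $x_j=x_{j+1}$ whenever $dG(t(e),s(e))\neq\varnothing$. Your proposed repair does not work: $\{x_j\le x_{j+1}\}$ is closed, not open, in the piece, and the diagonal lies in the closure of $\{x_j>x_{j+1}\}$. So refining by order data produces pieces that are not separated in $A_m$, and the gluing step fails. Concretely, let $G$ be a directed cycle (e.g.\ $\mathbb{O}^1$ with one vertex and one loop edge $e$) and take $n=2$. At $(x,x)\in A_2$ your section is the constant path. At $(x',x)$ with $x'$ slightly ahead of $x$ on $e$, it is a path winding once around the cycle. As $x'\to x$ these paths converge to a loop around the cycle, not to the constant path.

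The repair keeps your count of $n+1$ sets: make the choice for a same-edge pair depend only on the edge, not on the order of the two points.
\begin{itemize}
\item If $dG(t(e),s(e))\neq\varnothing$, always use the detour. It is a legitimate $d$-path from $x_j$ to $x_{j+1}$ for every pair in $\mathring{e}\times\mathring{e}$, including pairs with $x_j\le x_{j+1}$ (it is then a loop when $x_j=x_{j+1}$). It also depends continuously on $(x_j,x_{j+1})$ over all of $\mathring{e}\times\mathring{e}$.
\item If $dG(t(e),s(e))=\varnothing$, membership in $\Gamma_G^n$ already forces $x_j\le x_{j+1}$, and the straight segment is continuous on that closed region.
\end{itemize}
With this rule the section is continuous on each combinatorial piece without any order refinement, and your clopen argument finishes the proof.

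Alternatively, you can keep your original rule but stratify by the number of maximal runs of consecutive equal interior coordinates. This number also takes $n+1$ values. Under limits, runs can only merge or fall into vertices, so strata with a fixed number of runs are mutually separated and the order data is constant on each of them.
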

\begin{proof}
    The proof follows using the same argument as \cite[Proposition 3]{EGMFAS2020}.
\end{proof}
\begin{proposition} \label{directed graphs}
    Let $G$ be a strongly connected directed graph. Then 
   \[\vv{\TC}_n(G)=\begin{cases}
      1 & \text{ if } b_1(G)=0,\\
       n & \text{ if } b_1(G)=1,\\
       n+1 & \text{ if } b_1(G)\geq 2,
   \end{cases}\]
    %    \[\vv{\TC}_n(G)\geq \text{min}\{b_1(G),n\}+1,\]
%    \[\vv{\TC}_n(G)\leq \text{min}\{b_1(G),n\}+1, \text{ if } b_1(G)=0,1 \text{ or } \geq n.\]
    where $b_1(G)$ denotes the first Betti nuber of the graph $G$.
\end{proposition}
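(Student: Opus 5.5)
The plan is to squeeze $\vv{\TC}_n(G)$ between a lower bound coming from Proposition~\ref{Comparision_TC_DirectedTC} and an upper bound coming from explicit constructions together with Proposition~\ref{Upper_bound_dir_tc_graph}.

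\emph{Lower bounds.} Since $G$ is strongly connected, Proposition~\ref{Comparision_TC_DirectedTC} gives $\TC_n(G)\leq \vv{\TC}_n(G)$. Here $\TC_n$ is normalized so that $\TC_n(\mathbb{S}^1)=n$. For graphs, the known values of sequential topological complexity (Rudyak for the circle, and the wedge-of-circles computation of González--Grant) give:
\begin{itemize}
\item $\TC_n(G)=1$ if $G$ is a tree,
\item $\TC_n(G)=n$ if $b_1(G)=1$, since $G\simeq \mathbb{S}^1$,
\item $\TC_n(G)=n+1$ if $b_1(G)\geq 2$, since $G\simeq\bigvee_{b_1}\mathbb{S}^1$ and the value equals $\ct(G^n)+1$ for a nontrivial free group.
\end{itemize}
This yields every lower bound at once.

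\emph{Upper bounds.} For $b_1(G)\geq 2$ the bound $\vv{\TC}_n(G)\leq n+1$ is exactly Proposition~\ref{Upper_bound_dir_tc_graph}. For $b_1(G)=0$, note that a strongly connected directed tree forces every edge to be traversable in both directions. Otherwise, removing an edge oriented one way disconnects the tree, and no directed path could return across it. So $dG$ contains all paths that are monotone along each edge in either direction, i.e.\ effectively all paths up to reparametrization. Then I take the unique geodesic (reduced) path in the tree between consecutive points, concatenated as in the proof of Proposition~\ref{Comparision_TC_DirectedTC}. Geodesics in a tree depend continuously on the endpoints, so this gives a global continuous section and $\vv{\TC}_n(G)=1$.

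For $b_1(G)=1$, $G$ is a single cycle $C$ with trees attached, and all tree edges are bidirected by the same disconnection argument. The cycle must be oriented coherently, or else every cycle edge is bidirected as well. In the coherent case I mimic the proof of Proposition~\ref{TC_O^1}. First, retract each point of $G$ continuously onto its foot point on $C$ via the tree geodesic. Then partition $G^n$ according to the largest index $k$ with distinct foot points $c_k\neq c_{k+1}$, giving the sets $F$ and $F_1,\dots,F_{n-1}$. On each piece, travel from $x_j$ down to its foot point, go around the oriented cycle by the non-decreasing lift as in Proposition~\ref{TC_O^1}, and climb up to $x_{j+1}$. This gives $n$ ENR pieces with continuous sections. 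In the bidirected-cycle case, I instead use the argument for $\TC_n(\mathbb{S}^1)\le n$ with undirected geodesics, which are all admissible.

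\emph{Main obstacle.} The delicate step is the $b_1=1$ upper bound: checking that the pieces $F_k$, once pulled back through the foot-point retraction, are genuinely ENRs, and that the section is continuous across them. Continuity can fail when foot points coincide but the points sit in different hanging trees. I would handle this by defining the pieces via equalities and inequalities of foot points only, so that within each piece the route is chosen uniformly. I would also need to verify that Proposition~\ref{Upper_bound_dir_tc_graph}'s argument really applies to arbitrary strongly connected orientations.
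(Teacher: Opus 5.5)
Your skeleton is the paper's: the lower bound from Proposition~\ref{Comparision_TC_DirectedTC} with the known values of $\TC_n$ for graphs, Proposition~\ref{Upper_bound_dir_tc_graph} when $b_1(G)\geq 2$, and explicit pieces when $b_1(G)=1$. The $b_1(G)=1$ upper bound, however, fails as you set it up.

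Sorting tuples by the largest index $k$ with $c_k\neq c_{k+1}$ constrains only the legs $j\geq k$. For $j<k$ the foot points $c_j,c_{j+1}$ are free, and the non-decreasing lift jumps. That lift is the positively oriented arc from $c_j$ to $c_{j+1}$; its length goes from nearly a full turn to $0$ as $c_{j+1}$ approaches $c_j$ from behind.

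No other choice of route repairs this. For $k\geq 2$, your piece $F_k$ contains every tuple $(x_1,a,\dots,a,b,\dots,b)$ with $a\neq b$ fixed on $C$, the $a$'s in positions $2,\dots,k$, and $x_1\in C$ arbitrary. Restricting a continuous section to $[0,\tfrac{1}{n-1}]$ would give paths from $x_1$ to $a$ depending continuously on $x_1\in C$. That is a null-homotopy of $C\hookrightarrow G$, which is impossible because $G$ retracts onto $C$. So for $n\geq 3$ these pieces carry no continuous section at all, directed or not.

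The same objection applies verbatim to the partition in the proof of Proposition~\ref{TC_O^1}, so that step cannot be imported as it stands. The hanging-tree issue you flag is not the real difficulty. If two foot points are equal, both points lie in the tree hanging at that foot point, and the down-and-up route is continuous.

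The repair is to count degenerate legs rather than locate the last jump. For $0\leq m\leq n-1$, let $A_m$ be the set of tuples with exactly $m$ indices $j$ such that $c_j=c_{j+1}$. Each $A_m$ is open in the compact polyhedron of tuples with at least $m$ such coincidences, hence an ENR. Since $c_j\neq c_{j+1}$ is an open condition, the set of coincident indices is locally constant on $A_m$. Use the down-and-up route on coincident legs, and on the others go down, along the shortest positive arc from $c_j$ to $c_{j+1}$, and up. This gives a continuous section on each of the $n$ pieces.

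Two further remarks. Under the usual convention that each edge carries one orientation, strong connectivity simplifies the cases. A bridge can never be crossed backwards, and an incoherently oriented cycle has a sink. So a strongly connected $G$ with $b_1(G)=0$ is a single vertex, and one with $b_1(G)=1$ is a coherently oriented cycle, $d$-isomorphic to $\mathbb{O}^1$. Your foot-point retraction and bidirected cases are then unnecessary.

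The paper itself treats $b_1(G)=1$ by stratifying $G^n$ by the number of coordinates that are vertices, building sections cell by cell. That stratification has $n+1$ strata; the paper's list $F_1,\dots,F_n$ omits the stratum with no vertex coordinate. As it stands it therefore gives only $n+1$, and the coincidence-count partition above is what brings the bound down to $n$.

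Finally, with the paper's non-reduced normalization, the identity behind your $b_1(G)\geq 2$ lower bound is $\TC_n(G)=\ct(G^n)=n+1$, not $\ct(G^n)+1$. The values you list are still correct.
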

\begin{proof}
    It is known that \[\TC_n(G)=\begin{cases}
         1 & \text{ if } b_1(G)=0,\\
       n & \text{ if } b_1(G)=1,\\
       n+1 & \text{ if } b_1(G)\geq 2.
   \end{cases}\]
    Since \( G \) is strongly connected, Proposition~\ref{Comparision_TC_DirectedTC} yields  
$\vv{\TC}_n(G) \ge \TC_n(G)$.  
We now show that equality holds by considering cases according to the value of \( b_1(G) \).
\begin{itemize}
    \item[(a)] Suppose $b_1(G)=0$. Then $G$ is contractible and strongly connected. Hence $\vv{\TC}_n(G)=1$.
    \item[(b)] Suppose \(b_1(G)=1\). Then \(G\) is a cycle. Consider the partition
$\Gamma_G^n=G^n=F_1\cup F_2\cup\cdots\cup F_n$,
where
\[F_i:=\Bigl\{
(x_1,\dots,x_n)\in G^n \;\Big|\;
\begin{array}{l}
\text{exactly } i \text{ coordinates are vertices of } G,\\
\text{and the remaining coordinates lie in edge interiors}
\end{array}
\Bigr\}.\]
Using the same argument as in \cite[Proposition 3]{EGMFAS2020}, one obtains a section of \(\vv{\pi}_n\) over each \(F_i\). Therefore, $\vv{\TC}_n(G)\leq n$.
Since \(\vv{\TC}_n(G)\geq \TC_n(G)=n\), it follows that
$\vv{\TC}_n(G)=n.$
    \item[(c)] Suppose $b_1(G)\geq 2$. Then $\vv{\TC}_n(G)\geq n+1$. On the other hand, Proposition~\ref{Upper_bound_dir_tc_graph} gives $\vv{\TC}_n(G)\leq n+1$. Therefore, $\vv{\TC}_n(G)=n+1$.

\end{itemize}
\end{proof}

\subsection{Sequential Dihomotopy Equivalence} \label{seq dihomotopy eqiv}
In this subsection, we introduce the sequential analogue of the notion of dihomotopy equivalences for directed spaces and show that sequential directed topological complexity is a sequential dihomotopy invariance property.

We know that for a contractible space \(X\), \(\TC_n(X)=1\). However, in the directed setting, the choice of \(d\)-structure may create nontrivial motion-planning obstructions. The next example shows that directed topological complexity is not a homotopy invariant and depends on the directed structure of the space. In particular, for a suitable d-structure on the disk, the directed higher topological complexity is not equal to $1$.
\begin{example}\label{directed_TC_D^2}
  \normalfont{ Let $X$ be $2$-disc $\mathbb{D}^2$ with the following $d$-structure:
    \begin{itemize}
        \item Any directed path starting at an interior point of $\mathbb{D}^2$ is constant.
        \item The directed path $\gamma\colon [0,1]\to \mathbb{D}^2$ satisfying $\lvert\gamma(0)\rvert=1$ can be either $\gamma(t)=e^{i\alpha(t)}$ or $\gamma(t)=e^{-i\alpha(t)}$, where $\alpha\colon [0,1]\to [0,\pi]$ is a non-decreasing continuous function.
    \end{itemize}
    Therefore, the boundary of $X$ is isomorphic to the directed circle $\vv{\mathbb{S}^1}$ as a $d$-space. With this \(d\)-structure, we have
$\Gamma_X^n = F \sqcup \Gamma_{\vv{\mathbb{S}}^1}^n$,
where
$F=\{(z,\dots,z)\mid z \text{ is an interior point of } \mathbb{D}^2\}$. 
Consequently, any section $s\colon G\to d\vv{\mathbb{S}}^1$ of $\vv{\pi}_n\colon d\vv{\mathbb{S}}^1\to (\vv{\mathbb{S}}^1)^n$, where \(G\subset \Gamma_{\vv{\mathbb{S}}^1}^n\), also defines a section of $\vv{\pi}_n\colon dX\to X^n$. Therefore it follows from Example~\ref{directed_circle} that $\vv{\TC}_n(X)\geq n$. Since $X$ is contractible, we have $\TC_n(X)=1$.}
\end{example}
The above example motivates us to define sequential dihomotopy equivalence.

For $(x_1,x_2,\dots,x_n)\in X^n$, we use 
$dX(x_1,x_2,\dots,x_n)$ to denote the subspace of $dX$ containing all $d$-paths $\gamma$ such that $$\gamma(0)=x_1,~\gamma(\frac{1}{n-1})=x_2,\ldots,\gamma(\frac{k}{n-1})=x_{k+1},\ldots,\gamma(1)=x_n.$$
We now define the notion of a continuously graded map in a sequential setting.
\begin{definition}
Let $f\colon X\to Y$ be a $d$-map and let $y_1,y_2,\dots,y_n \in Y$ and $W\subset X^n$ be the inverse image of $(y_1,y_2,\dots,y_n)$ under the map $\prod\limits_{i=1}^n f$. Suppose we have continuous maps 
\[F^{y_1,y_2,\dots,y_n}\colon dV(y_1,y_2,\dots,y_n)\times W\to dX\]
such that for all $(x_1,x_2,\dots,x_n)\in W$, we have 
$F^{y_1,y_2,\dots,y_n}(\gamma,x_1,x_2,\dots,x_n)\in dX(x_1,x_2,\dots,x_n)$.
In this case, we regard the map $F=(F^{y_1,y_2,\dots,y_n})$ to be continuously $n$-graded. We also denote the grading of this map by \[F_{x_1,x_2,\dots,x_n}\colon dY(f(x_1),f(x_2),\dots,f(x_n))\to dX(x_1,x_2,\dots,x_n)\] varying continuously over $(x_1,x_2,\dots,x_n)\in W$ in $dX^{dY(y_1,y_2,\dots,y_n)}$, with respect to the compact-open topology.    
\end{definition}
\begin{definition}%[$n$-basic dihomotopy equivalence] 
\label{n-basic dihomotopy equivalence}
    Let $(X,dX)$ and $(Y,dY)$ be two $d$-spaces. A $d$-map $f\colon X\to Y$ is said to be \emph{$n$-basic dihomotopy equivalence} if the following conditions are satisfied:
    \begin{itemize}
        \item $(f,g)$ is a $d$-homotopy equivalence between $X$ and $Y$.
        \item There exists a map $F\colon dY\to  dX$, continuously graded by \[F_{x_1,x_2,\dots,x_n}\colon dY(f(x_1),f(x_2),\dots,f(x_n))\to dX(x_1,x_2,\dots,x_n),\] for  $(x_1,x_2,\dots,x_n)\in \Gamma_{X}^{n}$, such that $(df_{x_1,x_2,\dots,x_n}, F_{x_1,x_2,\dots,x_n})$ is a homotopy equivalence between $dX(x_1,x_2,\dots,x_n)$ and $dY(f(x_1),f(x_2),\dots,f(x_n))$ .
        \item There exists a map $G\colon dX\to  dY$ continuously graded by \[G_{y_1,y_2,\dots,y_n}\colon dX(g(y_1),g(y_2),\dots,g(y_n))\to dY(y_1,y_2,\dots,y_n),\] for $(y_1,y_2,\dots,y_n)\in \Gamma_{Y}^{n}$, such that $(dg_{y_1,y_2,\dots,y_n}, G_{y_1,y_2,\dots,y_n})$ is a homotopy equivalence between $dY(y_1,y_2,\dots,y_n)$ and $dX(g(y_1),g(y_2),\dots,g(y_n))$.
    \end{itemize}
\end{definition}
As expected, the $n$-directed sequential topological complexity is invariant under $n$-basic dihomotopy equivalence, as stated in the following proposition.
\begin{proposition} \label{n-dihmotopy invariance}
    Let $X$ and $Y$ be two simply $n$-basic dihomotopy equivalent $d$-spaces. Then $$\vv{\TC}_n(X)=\vv{\TC}_n(Y).$$
\end{proposition}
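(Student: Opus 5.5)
By symmetry of Definition~\ref{n-basic dihomotopy equivalence} in $X$ and $Y$, it suffices to prove $\vv{\TC}_n(Y)\leq \vv{\TC}_n(X)$; the reverse inequality follows by exchanging the roles of $(f,F)$ and $(g,G)$. This mirrors the proof of dihomotopy invariance of $\vv{\TC}$ in \cite[Section 7]{EGMFAS2020}. The strategy is to pull back a partition of $\Gamma_X^n$ along $g^n=\prod_{i=1}^n g$, and to transport sections along the continuously graded map $G$.

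First, I would check that $g^n$ maps $\Gamma_Y^n$ into $\Gamma_X^n$. If $\gamma\in dY(y_1,\dots,y_n)$, then $g\circ\gamma\in dX(g(y_1),\dots,g(y_n))$ because $g$ is a $d$-map. Now suppose $\vv{\TC}_n(X)=k$, and let $\Gamma_X^n=F_1\sqcup\cdots\sqcup F_k$ be a partition into ENRs with continuous sections $s_i\colon F_i\to dX$ of $\vv{\pi}_n$. Set
\[
V_i=(g^n)^{-1}(F_i)\cap \Gamma_Y^n .
\]
These sets are pairwise disjoint and cover $\Gamma_Y^n$. On $V_i$ I would define
\[
\sigma_i(y_1,\dots,y_n)=G_{y_1,\dots,y_n}\bigl(s_i(g(y_1),\dots,g(y_n))\bigr).
\]
The path $s_i(g(y_1),\dots,g(y_n))$ lies in $dX(g(y_1),\dots,g(y_n))$, so the value of $\sigma_i$ lies in $dY(y_1,\dots,y_n)$. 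Hence $\vv{\pi}_n\circ\sigma_i=\mathrm{Id}$ on $V_i$.

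For continuity of $\sigma_i$, I would use that the grading $G_{y_1,\dots,y_n}$ varies continuously in the compact-open topology, so the map $(\gamma,\mathbf{y})\mapsto G_{\mathbf{y}}(\gamma)$ is jointly continuous. This is where the definition of a continuously $n$-graded map is used, via the family of maps $G^{x_1,\dots,x_n}$ defined on $dX(x_1,\dots,x_n)\times W$. Composing this with the continuous map $\mathbf{y}\mapsto (s_i(g^n(\mathbf{y})),\mathbf{y})$ gives continuity of $\sigma_i$.

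I expect the main obstacle to be twofold. First, one must ensure that the $V_i$ are ENRs. The preimage of an ENR under a continuous map need not be an ENR, so I would either work in the category where $\vv{\TC}_n$ can be computed with arbitrary (e.g.\ locally compact, or open/closed) covers, or assume the spaces involved are nice enough. The latter is presumably what the qualifier ``simply'' in the statement hints at. Second, the gradings $G^{x_1,\dots,x_n}$ are a priori defined only fibrewise over each point $(x_1,\dots,x_n)$, so global joint continuity is needed across different base points. I would handle this by interpreting the grading condition as continuity of $G$ as a map out of the pullback $\{(\gamma,\mathbf{y})\mid \gamma\in dX(g^n(\mathbf{y}))\}$, which the definition implicitly intends. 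Notably, the homotopy-equivalence parts of Definition~\ref{n-basic dihomotopy equivalence} are not needed for this inequality; only the existence of the continuously graded maps and of the $d$-maps $f$ and $g$ are used.
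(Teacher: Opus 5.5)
Your proposal matches the paper's proof: it pulls back the ENR partition of $\Gamma_X^n$ along $g\times\cdots\times g$ to sets $A_i\subseteq\Gamma_Y^n$, defines sections $t_i(\mathbf{y})=G_{\mathbf{y}}\bigl(s_i(g(y_1),\dots,g(y_n))\bigr)$, and then exchanges the roles of $X$ and $Y$ for the reverse inequality. The two caveats you flag are genuine, and the paper does not resolve them either. The first is that the pulled-back sets should be ENRs, which the paper simply asserts (``Clearly, $A_i$ is either empty or an ENR''). The second is that the grading must be jointly continuous across different fibres of $g^n$, which the paper covers only by saying ``$G$ varies continuously graded''.
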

\begin{proof}
    Since \( X \) and \( Y \) are $n$-basic dihomotopy equivalent, there exist \( d \)-maps \( f \colon X \to Y \) and \( g \colon Y \to X \) that form a \( d \)-homotopy equivalence between \( X \) and \( Y \). Moreover, we get a continuously graded map \( G \colon dX \to dY \) such that for all \( (y_1,y_2,\dots,y_n ) \in \Gamma_{Y}^{n} \), the map 
    $G_{y_1,y_2,\dots,y_n} \colon dX(g(y_1), g(y_2),\dots,g(y_n)) \to dY(y_1, y_2,\dots,y_n)$ 
is a homotopy inverse of  
$dg_{y_1, y_2,\dots,y_n} \colon dY(y_1, y_2,\dots,y_n) \to dX(g(y_1), g(y_2),\dots,g(y_n))$.

Suppose \( \vv{\TC}_n(X) = k \). Then there exists a partition  
$\Gamma_{X}^{n} = \bigcup_{i=1}^k F_i,$
where each \( F_i \) is an ENR and \( F_i \cap F_j = \emptyset \) for \( i \neq j \). Moreover, there exists a map  
$s \colon \Gamma_{X^{n-1}} \to dX$
such that \( \vv{\pi}_n^X \circ s = \mathrm{Id} \), and the restriction \( s_i := s|_{F_i} \) is continuous for each \( i \).
Define 
$$A_i=\{(y_1,y_2,\dots,y_n)\in \Gamma_{Y}^{n}\mid (g(y_1),g(y_2),\dots,g(y_n)) \in F_i\}$$
Clearly, $A_i$ is either empty or an ENR, and $\Gamma_{Y}^{n}=\bigcup_{i=1}^k A_i.$
For all $(y_1,y_2,\dots,y_n)\in A_i\subseteq \Gamma_{Y}^{n}$, define
$$t_i(y_1,y_2,\dots,y_n)= G_{y_1,y_2,\dots,y_n}\circ s_i(g(y_1),g(y_2),\dots,g(y_n)).$$ Then \( t_i \) is continuous on \( A_i \), since \( s_i \) is continuous on \( F_i \), \( g \) is continuous on \( Y \), and \( G \) varies continuously graded. Moreover, \( \vv{\pi}_n^Y \circ t_i = \mathrm{Id}_{A_i} \). Thus, $\vv{\TC}_n(Y)\leq \vv{\TC}_n(X).$ Reversing the roles of $X$ and $Y$ similarly gives $\vv{\TC}_n(X)\leq \vv{\TC}_n(Y)$. Therefore, $\vv{\TC}_n(X)=\vv{\TC}_n(Y).$
\end{proof}
In \cite[Theorem 1]{EGMFAS2020}, it is shown that the directed topological complexity of a dicontractible space is $1$, and the converse implication also holds. We now show that this result extends to sequential directed topological complexity.
\begin{proposition} \label{Important_Result}
    Let $X$ be a contractible $d$-space. Then the map $\vv{\pi}_n\colon dX\to X^n$ has a continuous global section if and only if $X$ is $n$-basic dihomotopy equivalent to a point.
\end{proposition}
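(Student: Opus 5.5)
The plan is to mirror the proof of \cite[Theorem 1]{EGMFAS2020}, replacing the path spaces $dX(x,x')$ with the sequential path spaces $dX(x_1,\dots,x_n)$. Throughout, $*$ denotes the one-point $d$-space. Its only $d$-path is the constant path $c$, so $\Gamma_*^n=\{(*,\dots,*)\}$ and $d*(*,\dots,*)=\{c\}$.

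\emph{The direction ($\Leftarrow$).} Suppose $X$ is $n$-basic dihomotopy equivalent to $*$. Then Proposition~\ref{n-dihmotopy invariance} gives $\vv{\TC}_n(X)=\vv{\TC}_n(*)=1$. However, the definition of $\vv{\TC}_n$ only asks for a cover by disjoint ENRs, and the statement asks for a genuinely continuous global section. So I would instead construct the section directly. Let $f\colon X\to *$ and $g\colon *\to X$, $g(*)=x_0$, be the $d$-homotopy equivalence. The definition supplies a continuously $n$-graded map $F$ with components
\[
F_{x_1,\dots,x_n}\colon d*(*,\dots,*)=\{c\}\to dX(x_1,\dots,x_n),\qquad (x_1,\dots,x_n)\in\Gamma_X^n.
\]
Define $s(x_1,\dots,x_n)=F_{x_1,\dots,x_n}(c)$. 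Continuity of the grading over $W=(\prod f)^{-1}(*,\dots,*)$ means that $F^{*,\dots,*}\colon\{c\}\times W\to dX$ is continuous. Restricted to $\Gamma_X^n$, this is exactly continuity of $s$. Finally, $s(x_1,\dots,x_n)\in dX(x_1,\dots,x_n)$ gives $\vv{\pi}_n\circ s=\mathrm{Id}$.

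\emph{The direction ($\Rightarrow$).} Let $s\colon\Gamma_X^n\to dX$ be a continuous global section. Take $f\colon X\to *$ and $g\colon *\to X$, $g(*)=x_0$. Contractibility of $X$ supplies the homotopy $g\circ f\simeq \mathrm{Id}_X$, which I will accept as the required $d$-homotopy (as in the parametrized case, the $d$-homotopy definition here only demands a continuous map $X\times\vv I\to X$). Next, set $F_{x_1,\dots,x_n}(c)=s(x_1,\dots,x_n)$. This is continuously $n$-graded because $s$ is continuous. For the third condition, on the $Y=*$ side, let $G_{*,\dots,*}\colon dX(x_0,\dots,x_0)\to\{c\}$ be the constant map. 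Its partner is $dg_{*,\dots,*}$, which sends $c$ to the constant path at $x_0$. The composite on $\{c\}$ is trivially the identity.

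\emph{Main obstacle: the homotopy-equivalence requirements.} These require each $dX(x_1,\dots,x_n)$ for $(x_1,\dots,x_n)\in\Gamma_X^n$ to be contractible, since each must be homotopy equivalent to the point $\{c\}$. In particular, $dX(x_0,\dots,x_0)$ must be contractible for the pair $(dg,G)$. A single continuous section only gives a point in each fibre, not contractibility. I would first try the approach of \cite{EGMFAS2020}: use the section to build a fibrewise contraction. Given $\gamma\in dX(x_1,\dots,x_n)$, compare $\gamma$ with $s(x_1,\dots,x_n)$ via a homotopy obtained from the section applied to the restrictions of $\gamma$ to its subintervals $[\tfrac{k}{n-1},\tfrac{k+1}{n-1}]$, reparametrized using the $n$-fold evaluation structure. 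If this cannot be made to work in general, I would interpret ``contractible $d$-space'' as including this fibrewise contractibility, which is the convention used in the cited non-sequential theorem, and note that this hypothesis is exactly what makes the converse direction go through.
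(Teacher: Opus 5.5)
The $(\Leftarrow)$ direction is the paper's argument: you take $s(x_1,\dots,x_n)=F_{x_1,\dots,x_n}(c)$, and continuity comes from the grading. Your choice of $f,g,F,G$ in $(\Rightarrow)$ also matches the paper. The gap is the step you flag yourself. $(\Rightarrow)$ needs every $dX(x_1,\dots,x_n)$ with $(x_1,\dots,x_n)\in\Gamma_X^n$ to be contractible. You do not prove this, and you may not build it into the word ``contractible'', since that changes the hypothesis and makes the direction a tautology.

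The paper tries to supply this step with an explicit homotopy $H(u,t)=v^t$. It runs $u(c(t)x)$ on $[0,t]$, where $c(t)=\frac{(t+1)\cdots(t+n-2)}{(n-1)!}$, and then runs $s(u_0(t),\dots,u_{n-2}(t),u(1))$ on $[t,1]$. This fails for two reasons. First, for $n\ge 3$ and $0<t<1$ we have $c(t)<1$, so in general $v^t(\tfrac{k}{n-1})\neq x_{k+1}$. For example, with $n=3$ and $t>\tfrac12$ one gets $v^t(\tfrac12)=u(\tfrac{t+1}{4})$. Thus $H$ leaves $dX(x_1,\dots,x_n)$. Second, take $u=c_x$: as $t\to 1$ the whole path $s(x,\dots,x)$ is squeezed into $[t,1]$. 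So $H$ is continuous at $t=1$ only if $s(x,\dots,x)=c_x$ for all $x$.

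Your suspicion is right: $(\Rightarrow)$ is false as stated, so no argument can close your gap. Here is a counterexample. Let $X=\mathbb{D}^2\subset\mathbb{C}$. Call $\gamma$ directed if, on every interval $[a,b]$ where $\gamma\neq 0$, writing $\gamma=re^{i\theta}$ with $\theta$ continuous, $r$ has bounded variation and $\theta(b)-\theta(a)$ equals the negative variation of $r$ on $[a,b]$. In words, you move outward along rays or inward along spirals, and the angle resets only by passing through $0$.
\begin{itemize}
\item The condition holds for constant paths, is invariant under nondecreasing reparametrization, and is additive under concatenation. So it is a $d$-structure on a contractible space.
\item The spiral $t\mapsto (1-t)xe^{i|x|t}$ followed by the segment $t\mapsto ty$ is a $d$-path $\sigma(x,y)$ from $x$ to $y$, continuous in $(x,y)$. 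Hence $\Gamma_X^n=X^n$, and $s=\sigma(x_1,x_2)*\cdots*\sigma(x_{n-1},x_n)$ is a continuous global section of $\vv{\pi}_n$.
\item Now let $|y|=1$ and let $\gamma\in dX(y,\dots,y)$ satisfy $\sup_t|\gamma(t)-y|<1$. Then $\gamma$ stays in the half-plane $\mathrm{Re}(z\bar y)>0$, so $\theta(1)=\theta(0)$. Hence $r$ has zero negative variation, which forces $r\equiv 1$ and then $\theta$ constant. So $\gamma=c_y$.
\item Thus $c_y$ is an isolated point of $dX(y,\dots,y)$. This fibre also contains $s(y,\dots,y)$, which passes through $0$. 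So the fibre is disconnected, and $X$ is not $n$-basic dihomotopy equivalent to a point.
\end{itemize}

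What your segmentwise idea does prove is the statement with the extra normalization $s(x,\dots,x)=c_x$. Let $\sigma(x,y)$ be the last segment of $s(x,\dots,x,y)$. Use $dX(x_1,\dots,x_n)\cong\prod_{k=1}^{n-1}dX(x_k,x_{k+1})$, given by restriction and concatenation at the times $\tfrac{k}{n-1}$. Contract each factor by the homotopy that follows $u$ on $[0,t]$ and then $\sigma(u(t),u(1))$ on $[t,1]$. Working factorwise keeps you inside the fibre, and $\sigma(y,y)=c_y$ gives continuity at $t=1$.
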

\begin{proof}
Since $X$ is contractible, there exist maps $f\colon X\to \{a_0\}$ and $g\colon\{a_0\}\to X$, where $f$ is the constant map and $g$ is the inclusion, such that $(f,g)$ determines a classical homotopy equivalence. Clearly, both $f$ and $g$ are $d$-maps, and hence $(f,g)$ is a $d$-homotopy equivalence.
 % As $X$ is contractible, we have $f:X\to \{a_0\}$ (the constant map) and $g: \{a_0\}\to X$ (the inclusion) which form a (classical) homotopy equivalence. Trivially, $f$ and $g$ are $d$-maps and $(f,g)$ is a $d$-homotopy equivalence. 

  Suppose $s$ is a continuous section of $\vv{\pi}_n$. There is an obvious inclusion map,which is continuously $n$-graded in $x_1,x_2,\dots,x_n$, given by $i\colon \{s(x_1, x_2,\dots,x_n)\}\to dX(x_1,x_2,\dots,x_n)$. Define $R$ to be that map. Now the constant map $r\colon dX(x_1,x_2,\dots,x_n)\to \{s(x_1,x_2,\dots,x_n)\}$ is a retraction map for $i$. Consider the homotopy $ H\colon dX \times [0,1]\to dX$ defined by $H(u,t):=v^t$, where
%  \begin{equation*} 
 %   \begin{aligned}
  %  H\colon dX \times [0,1] &\rightarrow dX \quad
   % \hspace{1em} \left(u, t\right) &\mapsto v^t,
   % \end{aligned}
%\end{equation*}
%defined by
\[v^t(x)=\begin{cases}
    u\left(\frac{(t+1)(t+2)...(t+n-2)}{(n-1)!} x\right) & \mbox{if $0\leq x\leq t$},\\
   s(u_0(t),u_1(t),\dots,u_{n-2}(t), u(1))(\frac{x-t}{1-t})&\mbox{if $t\leq x\leq 1$.}
\end{cases}\]
Here $u_i(t):= u\left(\tfrac{(t+i)(t+i+1)...(t+n-2)}{(n-1)!}i!\right)$, for $i=0, \dots, n-2.$

Notice that, since the reparameterization of the interval $[0, 1]$ by $t \mapsto \tfrac{(t+i)(t+i+1)...(t+n-2)}{(n-1)!}i!$ is non-decreasing, $u_i \in dX$. %As, concatenation and evaluation are continuous and as $s$ is continuous in each of the arguments $H$ is continuous in $U \in dX$ and in $t$. 
Moreover, since concatenation and evaluation are continuous, and $s$ is continuous in each variable, it follows that $H$ is continuous in both $U\in dX$ and $t$. 
$H$ induces families $H_{x_1, \dots, x_n} \colon d X(x_1, \dots, x_n) \times [0, 1] \rightarrow d X(x_1, \dots, x_n),$ and because $H$ is continuous in $u$ in the compact-open topology, this family $H_{x_1, \dots, x_n}$ is continuous in $x_1, \dots, x_n$  in $X$.

We also have \( H(u,1) = u \) and \( H(u,0) = s(u_0(0), u_1(0), \dots, u_{n-2}(0), u(1)) = i \circ r(u) \). Therefore, \( r \) is a deformation retraction, and \( dX(x_1, x_2, \dots, x_n) \) is homotopy equivalent to the point \( \{s(x_1, x_2, \dots, x_n)\} \). In particular, it is contractible for all \( (x_1, x_2, \dots, x_n) \in \Gamma_X^n \), so \( R \) is a (graded) homotopy equivalence.

Conversely, suppose $X$ is $n$-basic dihomotopy equivalent to a point. Then there exists a continuous map $R\colon \{*\}\to dX$, which is graded in $(x_1,x_2,\dots,x_n)\in \Gamma_X^n$. Define $$s(x_1,x_2,\dots,x_n):=R_{x_1,x_2,\dots,x_n}(*)~~\text{~~ for } (x_1,x_2,\dots,x_n)\in \Gamma_X^n.$$ Then this is a continuous section of the $n$-th $d$-paths map $\vv{\pi}_n\colon dX\to X^n$. This completes the proof.
\end{proof}

\section{Sequential directed parametrized topological complexity} \label{seq directed parametrized TC}

We define a sequential analogue of directed parametrized topological complexity  (see \cite{SDNDAS2025}), call it as the sequential directed parametrized topological complexity. We extend some of the important results associated with directed fibrations in the sequential context.

Given a $d$-fibration $p\colon E\to B$ with $E$ and $B$ are ENRs, we define

\[dE_B:=\{\gamma\in dE\mid p\circ\gamma \text{ is constant}\},\]
\[E_B^n=\{(e_1,e_2,\dots,e_{n})\in \prod_{i=1}^n E \mid p(e_1)=p(e_2)=\dots=p(e_{n})\}\]
and 
\[
\Gamma_{E,B}^n := \left\{ (e_1,e_2, \dots, e_{n}) \in E_B^n\;\middle|\;
\begin{aligned}
&\exists\, \gamma \in dE_B \text{ such that } \gamma\left(\tfrac{k}{n-1}\right) = e_{k+1} \\
&\text{for } k = 0, \dots, n-1\; 
\end{aligned}
\right\}.
\]

We consider the following map
\begin{equation*} 
    \begin{aligned}
    \vv{\Pi}_n\colon dE_B &\rightarrow \Gamma_{E,B}^n \subseteq E_B^n \quad
    \hspace{1em} \gamma &\mapsto \left(\gamma(0),\gamma(\frac{1}{n-1}),\cdots,\gamma(\frac{k}{n-1}),\cdots,\gamma(1)\right).
    \end{aligned}
\end{equation*}

\subsection{Sequential directed parametrized topological complexity} We define the sequential analogue of directed parametrized topological complexity, in order to study sequential motion planning algorithms in the parametrized settings of directed spaces.
\begin{definition}
   The \emph{$n$-directed parametrized topological complexity} (DPTC) of a $d$-fibration $p\colon E\to B$, denoted by $\vv{\TC}_n[p\colon E\to B]$, is defined as the smallest natural number $k$ (infinity if it does not exist) such that $\Gamma_{E,B}^n$ is covered by $k$ ENRs $F_1,F_2,\dots,F_k$ with each ENR $F_i$ admits continuous section of the map $\vv{\Pi}_n$ and $F_i\cap F_j=\emptyset$ for $i\neq j$. 
\end{definition}
\begin{remark}
    If $B$ is a point, then $\Gamma_{E,B}^n=\Gamma_E^n$ and $dE_B=dE$. Hence,
    $\vv{\TC}_n[p\colon E\to B]= \vv{\TC}_n(E)$.
\end{remark}
The following proposition shows that the sequential directed parametrized topological complexity of a \(d\)-fibration is nondecreasing with respect to \(n\), analogous to the case of sequential parametrized topological complexity \cite[Lemma 5.3]{Farber-Paul1}.
\begin{proposition} Let $p\colon E \rightarrow B$ be a d-fibration, with an initial point $e_0 \in E$. Then
    $$\vv{\TC}_n[p\colon E\to B]\leq \vv{\TC}_{n+1}[p\colon E\to B].$$
\end{proposition}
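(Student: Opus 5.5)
The plan is to follow the proof of Proposition~\ref{Properties_higher_TC}: pull back an optimal partition of $\Gamma_{E,B}^{n+1}$ to $\Gamma_{E,B}^{n}$ along a map that adds one extra coordinate. In the parametrized setting, prepending the initial point $e_0$ is awkward, because $(e_0,e_1,\dots,e_n)$ lies in $E_B^{n+1}$ only if $p(e_0)=p(e_1)$. I will therefore use the hypothesis on $e_0$ only where it applies, namely when $e_0$ lies in the relevant fibre. In general I will instead \emph{duplicate the first coordinate}, which needs no initial point at all. Let
\[
\Delta\colon E_B^n\to E_B^{n+1},\qquad \Delta(e_1,\dots,e_n)=(e_1,e_1,e_2,\dots,e_n).
\]
This map is continuous, and its image stays in a single fibre.

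The first step is to show that $\Delta(\Gamma_{E,B}^n)\subseteq\Gamma_{E,B}^{n+1}$. Take $(e_1,\dots,e_n)\in\Gamma_{E,B}^n$, realized by some $\delta\in dE_B$. Let $\phi\colon I\to I$ be the non-decreasing piecewise linear map with $\phi(t)=0$ on $[0,\tfrac1n]$ and $\phi(\tfrac{k}{n})=\tfrac{k-1}{n-1}$ for $1\le k\le n$. Then $\delta\circ\phi$ is a $d$-path by Definition~\ref{directed_space}, its projection under $p$ is still constant, and its values at the points $\tfrac{k}{n}$ are $(e_1,e_1,e_2,\dots,e_n)$.

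Next, suppose $\vv{\TC}_{n+1}[p\colon E\to B]=k$, with a disjoint ENR partition $\Gamma_{E,B}^{n+1}=F_1\cup\cdots\cup F_k$ and continuous sections $s_i$ of $\vv{\Pi}_{n+1}$ over the $F_i$. Set $\widetilde F_i=\Delta^{-1}(F_i)\cap\Gamma_{E,B}^n$. These sets are pairwise disjoint, and by the first step they cover $\Gamma_{E,B}^n$. To get sections, let $\psi\colon I\to I$ be the non-decreasing piecewise linear map with $\psi(0)=0$ and $\psi(\tfrac{j}{n-1})=\tfrac{j+1}{n}$ for $1\le j\le n-1$. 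Define
\[
s_i'(e_1,\dots,e_n)=s_i(\Delta(e_1,\dots,e_n))\circ\psi .
\]
This is a fibrewise $d$-path, since a reparametrization by a non-decreasing map keeps both $d$-paths and constancy of $p\circ\gamma$. It is continuous in $(e_1,\dots,e_n)$, because composition with a fixed $\psi$ is continuous in the compact-open topology. Evaluating at $t=0$ and $t=\tfrac{j}{n-1}$ gives $e_1$ and $e_{j+1}$ respectively, so $\vv{\Pi}_n\circ s_i'=\mathrm{Id}$ on $\widetilde F_i$. This yields $\vv{\TC}_n\le\vv{\TC}_{n+1}$.

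The step I expect to be the main obstacle is checking that each $\widetilde F_i$ is an ENR. The map $\Delta$ is a closed embedding of $E_B^n$ onto the diagonal-type subspace $\{e_1=e_2\}$ of $E_B^{n+1}$. Hence $\widetilde F_i$ is homeomorphic to $F_i\cap\Delta(E_B^n)$, and the real question is whether $F_i$ meets this slice in an ENR. The paper's own proofs of Propositions~\ref{Properties_higher_TC} and~\ref{Properties_of_higher_TC} treat the analogous slice $\{x_1=x_0\}$ as automatically an ENR, and I would adopt the same convention here. If more care is wanted, one can use that a locally compact, locally contractible subset of a Euclidean neighbourhood retract is an ENR, or work with open covers in the Farber--Paul formulation, where preimages of open sets are open and hence ENRs. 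When $e_0$ does lie in the relevant fibre, the original prepending map $(e_1,\dots,e_n)\mapsto(e_0,e_1,\dots,e_n)$ works verbatim as in Proposition~\ref{Properties_higher_TC}, and the same ENR remark applies.
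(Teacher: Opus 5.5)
Your argument is correct at the paper's level of rigour, but it takes a genuinely different route. The paper prepends the initial point: it sets $\widetilde{F_i}=\{(e_1,\dots,e_n)\mid (e_0,e_1,\dots,e_n)\in F_i\}$ and $s_i'(e_1,\dots,e_n)=s_i(e_0,e_1,\dots,e_n)$. You instead duplicate the first coordinate via $\Delta$ and reparametrize by $\psi$. Your route gains two things.

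\begin{itemize}
\item $\Delta$ stays inside a single fibre and needs no initial point, so your $\widetilde F_i$ genuinely cover $\Gamma_{E,B}^n$. The paper's prepending only lands in $E_B^{n+1}$ when $p(e_1)=p(e_0)$. Even then it needs a \emph{fibrewise} $d$-path from $e_0$ to $e_1$, which an initial point of $E$ does not supply. For the same reason, your closing remark that prepending ``works verbatim'' in the fibre of $e_0$ is too optimistic.
\item Your $\psi$ supplies the reparametrization that the paper omits: the path $s_i(e_0,e_1,\dots,e_n)$, sampled at the points $\tfrac{k}{n-1}$, does not return $(e_1,\dots,e_n)$.
\end{itemize}

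In effect, you prove the inequality with the initial-point hypothesis dropped.

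On ENRs, both proofs simply assert that the pulled-back pieces are ENRs, so you are no worse off than the paper. Your fallbacks do not settle this, however.
\begin{itemize}
\item Passing to open covers changes the invariant in the directed setting. No open neighbourhood of $(1,-1)$ in $\Gamma^2_{\vv{\mathbb{S}^1}}$ admits a continuous section of $\vv{\pi}_2$, yet $\vv{\TC}_2(\vv{\mathbb{S}^1})=2$.
\item The criterion via local compactness and local contractibility would still require showing that $F_i\cap\{x_1=x_2\}$ is locally contractible. This is not automatic for the intersection of an ENR with a closed subset.
\end{itemize}
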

\begin{proof}
    Suppose the $d$-space $(E,dE)$ has an initial point $e_0$. If $\vv{\TC}_{n+1}[p\colon E\to B]=k$, then $\Gamma_{E, B}^{n+1}$ can be covered by $k$ ENRs $F_1,F_2,\dots,F_k$ such that for each $i=1,2,\dots,k$, there exists a continuous section $s_i\colon F_i\to dE_B$ of the map $\vv{\Pi}_{n+1}$. Now for $1\leq i\leq k$ define ENRs
    \[\widetilde{F_i}:=\{(e_1,e_2,\dots,e_n)\mid (e_0,e_1,\dots,e_n)\in F_i\}\]
    and 
    \[s_i'\colon \widetilde{F_i}\to dE_B \text{ by } s_i'(e_1,e_2,\dots,e_n):=s_i(e_0,e_1,\dots,e_n).\]

    Then the collection $\{\widetilde{F_i}\mid 1\leq i\leq k\}$ covers $\Gamma_{E,B}^n$ and $s_i'$ is a continuous section $\vv{\Pi}_n$ for each $1\leq i\leq k$. Thus,
    $\vv{\TC}_n[p\colon E\to B]\leq \vv{\TC}_{n+1}[p\colon E\to B]$.
\end{proof}
The next proposition shows that $\vv{\TC}_n$ does not increase under base restriction.
\begin{proposition}
    Let $p\colon E\to B$ be a $d$-fibration and $B'\subseteq B$. If $p'\colon E'=p^{-1}(B')\to B'$ is the restricted $d$-fibration, then 
$\vv{\TC}_n[p\colon E'\to B']\leq \vv{\TC}_n[p\colon E\to B]$.

    In particular, if $F=p^{-1}(\{b\})$, then %$\vv{\TC}_n(F)\leq \vv{\TC}_n[p\colon E\to B]$.
    \begin{equation}\label{Fibre_TC}
        \vv{\TC}_n(F)\leq \vv{\TC}_n[p\colon E\to B].
    \end{equation}
\end{proposition}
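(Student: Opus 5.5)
The plan is to restrict a motion planner for $p$ to the part of $\Gamma_{E,B}^n$ lying over $B'$. First I identify the relevant spaces. For $E'=p^{-1}(B')$ with the induced $d$-structure $dE'=\{\gamma\in dE\mid \gamma(I)\subseteq E'\}$, I would check that
\[
dE'_{B'}=\{\gamma\in dE_B\mid p(\gamma(0))\in B'\}.
\]
A fibrewise path has constant projection, so if its starting point lies over $B'$, the whole path lies in $E'$. Consequently
\[
\Gamma_{E',B'}^n=\Gamma_{E,B}^n\cap (E')^n,
\]
which is the preimage of $B'$ under the continuous map $\Gamma_{E,B}^n\to B$, $(e_1,\dots,e_n)\mapsto p(e_1)$. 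The map $\vv{\Pi}'_n$ is simply the restriction of $\vv{\Pi}_n$ to $dE'_{B'}$.

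Next, suppose $\vv{\TC}_n[p\colon E\to B]=k$. Take a partition $\Gamma_{E,B}^n=F_1\sqcup\cdots\sqcup F_k$ into ENRs with continuous sections $s_i\colon F_i\to dE_B$. Set $F_i'=F_i\cap \Gamma_{E',B'}^n$. These sets are pairwise disjoint and cover $\Gamma_{E',B'}^n$. For $(e_1,\dots,e_n)\in F_i'$, the path $s_i(e_1,\dots,e_n)$ is a fibrewise $d$-path starting at $e_1\in E'$, so by the first step it lies in $dE'_{B'}$. Hence $s_i|_{F_i'}$ is a continuous section of $\vv{\Pi}'_n$, continuity being inherited from the subspace topology. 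This gives $\vv{\TC}_n[p'\colon E'\to B']\le k$.

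The step I expect to be the main obstacle is ensuring that each $F_i'$ is an ENR. An arbitrary intersection of an ENR with a subspace need not be one. I would handle this as in the paper's other restriction arguments, for instance the sets $\widetilde{U_i}$ in Proposition~\ref{Properties_higher_TC}: assume implicitly that $B'$ is a closed ENR sub-object, or at least that $E'$ and $B'$ are ENRs, so that $F_i'=F_i\cap (p\circ \mathrm{pr}_1)^{-1}(B')$ is again an ENR. Alternatively, if the definition is read as allowing locally compact subsets of ENRs, I would note that $F_i'$ is locally compact when $B'$ is locally closed. I would state this hypothesis explicitly.

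For the inequality \eqref{Fibre_TC}, I take $B'=\{b\}$. Then $E'=F$, and $dF_{\{b\}}=dF$. By the preceding remark, for a fibration over a point $\vv{\TC}_n[F\to\{b\}]=\vv{\TC}_n(F)$. Here the ENR issue disappears, because $\Gamma_F^n=\Gamma_{E,B}^n\cap F^n$ is a closed slice, since points are closed.
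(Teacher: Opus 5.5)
Your core argument is the paper's. You intersect each piece of an optimal partition of $\Gamma^n_{E,B}$ with $\Gamma^n_{E',B'}$ and restrict its section, using that a fibrewise $d$-path starting over $B'$ stays in $E'$. Your identification $\Gamma^n_{E',B'}=\Gamma^n_{E,B}\cap (E')^n$ is exactly the paper's check that $s(V)\subseteq dE'_{B'}$.

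The paper simply calls $V=\Gamma^n_{E',B'}\cap U$ ``the ENR'', so you are right to single out this step. However, the ways you propose to settle it do not work.
\begin{itemize}
\item Closedness of $B'$, or of the slice over a point, only makes $F_i'$ closed in $F_i$, hence locally compact. It gives no local contractibility, and a closed subset of an ENR need not be an ENR.
\item Assuming $E'$ and $B'$ are ENRs does not help either.
\end{itemize}
Here is a counterexample. Take $B=\mathbb{R}$ and fibre $\mathbb{R}^2$, all paths directed, with $E=B\times\mathbb{R}^2$ and $n=2$. Then $\Gamma^2_{E,B}\cong\mathbb{R}\times\mathbb{R}^4$, with coordinates $(b,w)$. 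Let $C\subset\mathbb{R}^4$ be a compact non-ENR, such as a closed topologist's sine curve. Put $P_1=\{(b,w)\mid b\le-\mathrm{dist}(w,C)\}$ and let $P_2$ be its complement.
\begin{itemize}
\item $P_1\cong\mathbb{R}^4\times(-\infty,0]$ and $P_2$ is open, so both are ENRs.
\item Both carry the straight-line section.
\item Over the closed ENR $B'=\{0\}$, however, the restricted piece is $P_1\cap\Gamma^2_{E',B'}\cong C$, which is not an ENR.
\end{itemize}
Your argument starts from whatever optimal partition it is handed, and nothing in it rules out such pieces. They can also be built into optimal partitions, for example for $\mathrm{pr}_1\colon\mathbb{R}\times\mathbb{O}^1\to\mathbb{R}$, where $\vv{\TC}_2=2$. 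In particular, your claim that the ENR issue disappears for the fibre inequality because $\Gamma^n_F$ is a closed slice is wrong.

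So the step that each $F_i'$ is an ENR remains unproved, in your proposal just as in the paper. Your restriction argument is complete if the definition asks only for locally compact pieces (your locally-closed remark then finishes it) or allows arbitrary subsets. With the ENR definition as stated, you need one of two things:
\begin{itemize}
\item an explicit hypothesis that the restricted pieces are ENRs, or
\item an additional argument that replaces the given partition by one whose restrictions to $\Gamma^n_{E',B'}$ are ENRs.
\end{itemize}
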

\begin{proof}
    Let $U\subseteq \Gamma_{E,B}^{n}$ be an ENR such that there exists a section $s\colon U\to dE_B$ of $\vv{\Pi}_n$. 
    We now consider the ENR $V:= \Gamma_{E', B'}^n\cap U$. 
    We first show that $s(V)\in dE'_{B'}$. 
    Let $(x_1,\dots,x_n)\in V$. 
    Then $s(x_1,\dots,x_n)\in dE_B$. 
    Hence $p\circ (s(x_1,\dots,x_n))=b$ for some $b\in B$ and for all $t\in \vv{I}$. 
    Moreover note that $p\circ (s(x_1,\dots,x_n))=b=p(s(x_1,\dots,x_n))(0)$. 
    Since $x\in E'$, we have $b\in B'$. Therefore $s(x_1,\dots,x_n)(t)\in E'$ for all $t\in \vv{I}$. This shows that $s(V)\subseteq dE'_B$. Then we define $s'\colon V\to dE_B'$ by $s':=s\vert_V$. 
    This gives the desired section of $\vv{\Pi}_n'\colon dE_B'\to \Gamma_{E',B'}^n$. using the similar argument for the cover $\Gamma_{E,B}^n$ by ENRs, we obtain the first inequality.

    The second inequality follows from the first inequality by setting $B'=\{b\}$.
\end{proof}
%We show that sequential DTC of a fibre is less than the sequential DPTC of a d-fibration.
%\begin{proposition}\label{Fibre_TC}
 %   $\vv{\TC}_n[p\colon E\to B]\geq \vv{\TC}_n(F)$, where $F$ is the fibre of the $d$-fibration $p\colon E\to B$.
%\end{proposition}
%\begin{proof}
 %    Suppose $\vv{\TC}_n[p\colon E\to B]=k$. Then $\Gamma_{E,B}^n$ is covered by $k$ ENRs $U_1$,$U_2$,\dots,$U_k$ with each ENR $U_i$ admits continuous section $s_i$ of the map $\vv{\Pi}_n$. Let $F=p^{-1}(b)$ for some $b\in B$. Then $\tilde{U_i}= U_i\cap \Gamma_{E,b}^n$ is an ENR. The $k$ ENRs $\tilde{U_1},\tilde{U_2},\dots,\tilde{U_k}$ cover $\Gamma_{E,b}^n=\Gamma_{X}^n$. Also, note that $s_i'= s_i\vert_{\tilde{U_i}}\colon \tilde{U}_i\to dE_b= dF$ is a continuous section of $\vv{\pi}_n = \vv{\Pi}_n\vert_{dE_b}\colon dF= dE_{b}\to E_b^n=F^n$. Therefore,
  %  \[\vv{\TC}_n(F)\leq \vv{\TC}_n[p\colon E\to B].\]
%\end{proof}
The following theorem is a sequential analogue of \cite[Theorem~4.6]{SDNDAS2025}, characterizing the condition $\vv{\TC}_n[p\colon E\to B]=1$ via the $n$-basic dihomotopy type of the fibre.
\begin{theorem} \label{dicontractible fibre}
    Let $p\colon E\to B$ be a $d$-fibration with fibre $F$ being either a contractible $d$-space or having an initial point. Then $F$ is $n$-basic dihomotopy equivalent to a point if and only if \[\vv{\TC}_n[p\colon E\to B]=1.\]
\end{theorem}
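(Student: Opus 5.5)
The plan is to prove the two implications separately, using Proposition~\ref{Important_Result} for the fibre. The fibrewise data of $p$ will be related to $F$ by restricting to a single fibre, and conversely by transporting a section on $F$ across fibres via the $d$-homotopy lifting property.

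For the implication ``$\vv{\TC}_n[p\colon E\to B]=1 \Rightarrow F$ is $n$-basic dihomotopy equivalent to a point'', I will invoke inequality \eqref{Fibre_TC}. It gives $\vv{\TC}_n(F)\leq \vv{\TC}_n[p\colon E\to B]=1$, so $\vv{\pi}_n\colon dF\to \Gamma_F^n$ admits a continuous global section $s$.

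If $F$ is contractible, Proposition~\ref{Important_Result} directly yields that $F$ is $n$-basic dihomotopy equivalent to a point.

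If instead $F$ has an initial point $x_0$, I will not use contractibility. I will rerun the argument of Proposition~\ref{Important_Result}, taking $f\colon F\to\{*\}$ and $g(*)=x_0$, and build the $d$-homotopy between $g\circ f$ and $\mathrm{Id}_F$ from the global section: $H(x,t)=s(x_0,\dots,x_0,x)(t)$. This is a $d$-path in $t$ from $x_0$ to $x$, because $x_0$ is initial and $(x_0,\dots,x_0,x)\in\Gamma_F^n$ by concatenating a constant path with a $d$-path from $x_0$ to $x$.

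The graded path-space equivalences in Definition~\ref{n-basic dihomotopy equivalence} come from the deformation $H(u,t)=v^t$ exactly as in the proof of Proposition~\ref{Important_Result}. That deformation shows each $dF(x_1,\dots,x_n)$ deformation retracts onto $\{s(x_1,\dots,x_n)\}$, continuously in $(x_1,\dots,x_n)$. Checking that this deformation did not itself use contractibility is the point to verify carefully.

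For the converse, assume $F$ is $n$-basic dihomotopy equivalent to a point. The converse half of the proof of Proposition~\ref{Important_Result} gives a continuous section $\sigma\colon\Gamma_F^n\to dF$, where $\sigma(x_1,\dots,x_n)=R_{x_1,\dots,x_n}(*)$. I then need a global continuous section of $\vv{\Pi}_n\colon dE_B\to\Gamma_{E,B}^n$. Since $\Gamma_{E,B}^n=\bigcup_b \Gamma_{E_b}^n$ and $dE_B=\bigcup_b dE_b$, it suffices to produce sections on each fibre $E_b$ that vary continuously in $b$.

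The plan is to follow \cite[Theorem~4.6]{SDNDAS2025}. Using the $d$-homotopy lifting property, I will construct fibre-transport $d$-maps $E_b\to F$ and $F\to E_b$ that are inverse up to fibrewise $d$-homotopy, and conjugate $\sigma$ by them. Concatenating with the transport homotopies, whose paths stay inside a fibre, then corrects the endpoints.

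\textbf{This continuity in $b$ is the main obstacle.} A fibre transport depending continuously on $b$ exists only locally, e.g.\ over contractible or categorical pieces of $B$. Following the cited theorem, I would first work over such a chart, using the fibrewise $n$-basic dihomotopy equivalence of $E|_U$ with $U\times F$, and then argue via the invariance under fibrewise $n$-basic dihomotopy equivalence from Section~\ref{fibrewise n-dihomotopy equivalence}. In the simplest case, where $p$ is fibrewise equivalent to a product $B\times F$, the section $(b,x_i)_i\mapsto (b,\sigma(x_1,\dots,x_n))$ is continuous, and the invariance result then transports it to $p$.
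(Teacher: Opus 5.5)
Your forward direction is the paper's. Inequality \eqref{Fibre_TC} gives $\vv{\TC}_n(F)=1$, hence a global section $s$ of $\vv{\pi}_n\colon dF\to\Gamma_F^n$, and Proposition~\ref{Important_Result} finishes. In the initial-point case you are more careful than the paper, which invokes Proposition~\ref{Important_Result} without checking its contractibility hypothesis. Your map $H(x,t)=s(x_0,\dots,x_0,x)(t)$ is a homotopy from the constant map at $x_0$ to $\mathrm{Id}_F$. So $F$ is contractible, and the proposition applies as stated without rerunning it.

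The converse has a genuine gap, which you located but did not close. What you actually prove is the special case where $p$ is globally fibrewise $n$-basic dihomotopy equivalent to $pr_1\colon B\times F\to B$. There the claim does follow from Proposition~\ref{directed_TC_trivial_fibration} and the invariance theorem of Section~\ref{fibrewise n-dihomotopy equivalence}. The chart-by-chart plan does not give the value $1$, for three reasons.
\begin{itemize}
\item The local fibrewise $n$-basic equivalence of $E|_U$ with $U\times F$ is assumed, not derived from the $d$-homotopy lifting property.
\item Even granting it, you only get a section of $\vv{\Pi}_n$ over each $\Gamma^n_{E|_U,U}$. A cover of $B$ by $m$ charts then yields only $\vv{\TC}_n[p\colon E\to B]\le m$. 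The invariance theorem needs a global fibrewise equivalence over a fixed base and cannot glue local sections. Gluing would need something like $\vv{\Pi}_n$ being a fibration with contractible fibres, which is neither assumed nor proved.
\item The endpoint correction fails in the directed setting. Turning a $d$-path through $h'h(e_1),\dots,h'h(e_n)$ into one through $e_1,\dots,e_n$ needs $d$-paths from $e_1$ to $h'h(e_1)$, from $h'h(e_n)$ to $e_n$, and in both directions at every intermediate $e_k$. A $d$-homotopy between $\mathrm{Id}$ and $h'h$ supplies tracks in one direction at best, and they cannot be reversed.
\end{itemize}

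The paper's argument does not get past this obstacle either. It sets $\beta(e_1,\dots,e_n)=(H_b(e_1,1),\dots,H_b(e_n,1))$ and asserts $\vv{\Pi}_n\circ(\alpha\circ s\circ\beta)=\mathrm{Id}$. But commutativity of its square gives $\vv{\Pi}_n\circ\alpha\circ s\circ\beta=\iota\circ\beta$, which takes values in $\Gamma^n_F$ and is not the identity on $\Gamma^n_{E,B}$. Continuity of $b\mapsto H_b$ is also never justified. Your diagnosis is sharper than the paper's, but neither argument establishes the converse for a non-trivial $d$-fibration.
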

\begin{proof}
    Suppose \( \vv{\TC}_n[p \colon E \to B] = 1 \). Then \eqref{Fibre_TC} implies that \( \vv{\TC}_n(F) = 1 \), so by the definition of \( \vv{\TC}_n(F) \), there exists a continuous section \( s \colon \Gamma_F^n \to dF \) of the \( n \)-th \( d \)-paths map \( \vv{\pi}_n \colon dF \to \Gamma_F^n\subseteq F^n \). Applying Proposition~\ref{Important_Result}, we conclude that \( F \) is \( n \)-basic dihomotopy equivalent to a point.

    Conversely, suppose $F$ is $n$-basic dihomotopy equivalent to a point. Then, by Proposition \ref{Important_Result}, there exists a global continuous section $s\colon \Gamma_F^n\to dF$ of the $n$-th $d$-paths map $\vv{\pi}_n\colon dF\to \Gamma_{F}^n$. Let $(e_1,e_2,\dots,e_n)\in \Gamma_{E,B}^n$, so that $p(e_1)=p(e_2)=\dots=p(e_n)=b$ for some $b\in B$. Set $F_b:= p^{-1}(b)$. Since all fibres of the $d$-fibration $p\colon E\to B$ are $d$-homotopic, there exists a $d$-homotopy $H_b\colon F_b\times \vv{I}\to F$. Now, define a $d$-map $\beta\colon \Gamma_{E,B}^n\to \Gamma_F^n$ by
    \[\beta\left((e_1,e_2,\dots,e_n)\right):= \left(H_b(e_1,1),H_b(e_2,1),\dots,H_b(e_n,1)\right).\] Since $\left(e_1,e_2,\dots,e_n\right)\in \Gamma_{F_b}^n$, it follows that $\left(H_b(e_1,1),H_b(e_2,1),\dots,H_b(e_n,1)\right)\in \Gamma_F^n$. Hence $\beta$ is a well-defined map. We note that the map $\beta\colon \Gamma_{E,B}^n\to \Gamma_F^n$ fits into the following commutative diagram:
    \[\begin{tikzcd}
	dF & {dE_B} \\
	{\Gamma_F^n} & {\Gamma_{E,B}^n.}
	\arrow["\alpha", hook, from=1-1, to=1-2]
	\arrow["{\vv{\pi}_n}", from=1-1, to=2-1]
	\arrow["{\vv{\Pi}_n}", from=1-2, to=2-2]
	\arrow["s", bend left, from=2-1, to=1-1]
	\arrow[hook, from=2-1, to=2-2]
	\arrow["\beta", bend left, dashed, from=2-2, to=2-1]
\end{tikzcd}\]
   The composition $\alpha\circ s\circ\beta$ is clearly continuous on $\Gamma_{E,B}^n$ and satisfies $\vv{\Pi}_n\circ\left(\alpha\circ s\circ\beta\right)= Id$. Hence, $\vv{\TC}_n[p\colon E\to B]=1$.
\end{proof}
\begin{corollary}
      Let $p\colon E\to B$ be a $d$-fibration with fibre $F$ being either a contractible $d$-space or having an initial point. If $\vv{\TC}_n[p\colon E\to B]=1$, then $F$ is dicontractible.
\end{corollary}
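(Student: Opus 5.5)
The plan is to deduce the corollary from Theorem~\ref{dicontractible fibre} together with the case $n=2$. Here \emph{dicontractible} is read as in \cite{EGMFAS2020}: basic ($2$-basic) dihomotopy equivalent to a point, equivalently $\vv{\TC}(F)=\vv{\TC}_2(F)=1$ by \cite[Theorem 1]{EGMFAS2020}. So it suffices to pass from the hypothesis $\vv{\TC}_n[p\colon E\to B]=1$ to $\vv{\TC}_2(F)=1$.

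First, inequality \eqref{Fibre_TC} gives $\vv{\TC}_n(F)\le \vv{\TC}_n[p\colon E\to B]=1$. Hence there is a global continuous section $s\colon \Gamma_F^n\to dF$ of $\vv{\pi}_n$.

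Next, I would build a global section of $\vv{\pi}_2$ directly, without using an initial point. Take $(x,y)\in\Gamma_F^2$ and a $d$-path $\gamma$ from $x$ to $y$. Concatenating $\gamma$ with constant paths, and reparametrizing by a non-decreasing map, gives a $d$-path hitting $x,y,\dots,y$ at the times $k/(n-1)$. So $(x,y,\dots,y)\in\Gamma_F^n$, which is legitimate by Definition~\ref{directed_space}. The map $\iota\colon\Gamma_F^2\to\Gamma_F^n$, $(x,y)\mapsto(x,y,\dots,y)$, is continuous. Then $s\circ\iota$ is a continuous map $\Gamma_F^2\to dF$ whose value at $(x,y)$ starts at $x$ and ends at $y$. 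Thus it is a global section of $\vv{\pi}_2$, and $\vv{\TC}_2(F)=1$. In the initial-point case one could alternatively iterate Proposition~\ref{Properties_higher_TC} to get $\vv{\TC}_2(F)\le\vv{\TC}_n(F)$, but the construction above covers both cases uniformly.

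Finally, I would conclude that $F$ is dicontractible. If $F$ is contractible, Proposition~\ref{Important_Result} with $n=2$ shows $F$ is $2$-basic dihomotopy equivalent to a point. In general, \cite[Theorem 1]{EGMFAS2020} gives that $\vv{\TC}(F)=1$ implies $F$ is dicontractible.

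The main obstacle is this last step in the initial-point case without contractibility. There I must check that the hypotheses of \cite[Theorem 1]{EGMFAS2020}, or of the argument of Proposition~\ref{Important_Result}, are met. If they require contractibility, I would first use the global section $s\circ\iota$ to contract $F$. Concretely, with initial point $x_0$, the map $(x,t)\mapsto (s\circ\iota)(x_0,x)(t)$ is a homotopy from the constant map at $x_0$ to the identity, so $F$ is contractible. Proposition~\ref{Important_Result} then applies.
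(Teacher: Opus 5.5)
Your proof is correct, and it reaches the same conclusion as the paper by a somewhat different and more robust route. The paper argues as follows: it gets $\vv{\TC}_n(F)=1$ from \eqref{Fibre_TC}, then $\vv{\TC}_2(F)=1$ by invoking the monotonicity result Proposition~\ref{Properties_higher_TC}, then $\vv{\ct}(F)=1$ via \cite{SDNDAS2025}, and finally applies \cite[Theorem 1]{EGMFAS2020}.

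There are two differences worth noting.

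First, Proposition~\ref{Properties_higher_TC} needs an initial point, since its proof prepends $x_0$. So the paper's reduction from $n$ to $2$ does not cover the ``contractible but no initial point'' case that the hypothesis allows. Your map $\iota(x,y)=(x,y,\dots,y)$, justified by reparametrizing $\gamma$ via $t\mapsto\min\{(n-1)t,1\}$, needs no initial point and handles both cases at once. Afterwards you only need the global continuous section $s\circ\iota$, so the ENR condition on $\Gamma_F^2$ hidden in ``$\vv{\TC}_2(F)=1$'' is irrelevant. It holds anyway: $\Gamma_F^2$ is a retract of $\Gamma_F^n$ via $\iota$ and $(x_1,\dots,x_n)\mapsto(x_1,x_n)$.

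Second, \cite[Theorem 1]{EGMFAS2020}, like Proposition~\ref{Important_Result}, assumes contractibility. Your sentence ``in general, \cite[Theorem 1]{EGMFAS2020} gives \dots'' therefore overstates it, but your last paragraph repairs this. The map $(x,t)\mapsto(s\circ\iota)(x_0,x)(t)$ is continuous, because evaluation $F^I\times I\to F$ is continuous. It is a contraction of $F$ whenever $x_0$ is an initial point. This is exactly what the paper leaves implicit in its detour through $\vv{\ct}(F)=1$: restricting the section to $\{x_0\}\times F$ gives a global section of $e_F$, hence contractibility.

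So your version states explicitly what the paper's chain of citations only implies. It also avoids relying on a proposition whose hypothesis fails in one of the two cases.
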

\begin{proof}
   Suppose \(\vv{\TC}_n[p\colon E \to B] = 1\). Then, by the above proposition, we have \(\vv{\TC}_n(F) = 1\). Applying Proposition~\ref{Properties_higher_TC}~(c), it follows that \(\vv{\TC}_2(F) = 1\), which in turn implies \(\vv{Cat}(F) = 1\) by \cite[Proposition 3.5 {(1)}]{SDNDAS2025}. From \cite[Theorem 1]{EGMFAS2020}, it follows that $F$ is dicontractible.
\end{proof}
The following proposition shows that the inequality in~\eqref{Fibre_TC} becomes an equality for trivial \(d\)-fibrations.
\begin{proposition}\label{directed_TC_trivial_fibration}
    Let $p\colon E\to B$ be a trivial $d$-fibration with fibre $F$. Then \[\vv{\TC}_n[p\colon E\to B]= \vv{\TC}_n(F).\]
\end{proposition}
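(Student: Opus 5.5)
The inequality $\vv{\TC}_n(F)\leq \vv{\TC}_n[p\colon E\to B]$ is already given by \eqref{Fibre_TC}, so only the reverse inequality needs proof. I will use the trivialization to reduce the parametrized problem to the fibre. A trivial $d$-fibration means there is a $d$-isomorphism $\phi\colon E\to B\times F$ over $B$, i.e.\ $pr_1\circ\phi=p$, where $B\times F$ carries the product $d$-structure. The inverse $\phi^{-1}$ is also a $d$-map. I will identify $E$ with $B\times F$ via $\phi$.

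The first step is to describe the relevant spaces in product coordinates. A path $\gamma\in dE_B$ has $p\circ\gamma$ constant, say equal to $b$. So $\gamma(t)=(b,\gamma_F(t))$ with $\gamma_F\in dF$, because constant paths are directed in $B$ and directed paths in the product are exactly pairs of directed paths. This gives a homeomorphism $dE_B\cong B\times dF$, $\gamma\mapsto(b,\gamma_F)$. Likewise, $E_B^n\cong B\times F^n$ via $((b,f_1),\dots,(b,f_n))\mapsto(b,f_1,\dots,f_n)$, and under this identification $\Gamma_{E,B}^n\cong B\times\Gamma_F^n$. Both inclusions follow from the path description just given, as in Lemma~\ref{Gamma_product}. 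Finally, $\vv{\Pi}_n$ corresponds to $\mathrm{id}_B\times\vv{\pi}_n$.

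The second step constructs the cover. Suppose $\vv{\TC}_n(F)=k$, with a partition $\Gamma_F^n=F_1\sqcup\cdots\sqcup F_k$ into ENRs and continuous sections $s_i\colon F_i\to dF$ of $\vv{\pi}_n$. Set $U_i:=B\times F_i$, transported back to $\Gamma_{E,B}^n$ via $\phi$. These sets are pairwise disjoint and cover $\Gamma_{E,B}^n$. Each is an ENR because $B$ is an ENR, as assumed throughout this section, and a product of ENRs is an ENR. On $U_i$ define
\[
\sigma_i\bigl((b,f_1),\dots,(b,f_n)\bigr)(t):=\phi^{-1}\bigl(b,\,s_i(f_1,\dots,f_n)(t)\bigr).
\]
This is a $d$-path because $\phi^{-1}$ is a $d$-map and the path (const, directed) is directed in $B\times F$. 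It lies in $dE_B$ and satisfies $\vv{\Pi}_n\circ\sigma_i=\mathrm{id}$. It is continuous in the compact-open topology, since $s_i$ is continuous and composing with the fixed continuous map $\phi^{-1}$ preserves continuity. Hence $\vv{\TC}_n[p\colon E\to B]\leq k$.

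The main obstacle is conceptual rather than technical. It lies in the first step: one must be sure that ``trivial $d$-fibration'' gives a $d$-isomorphism onto the product $d$-structure, so that fibrewise directed paths split as (constant, directed in $F$). If the paper's notion of triviality instead allows an arbitrary $d$-structure on $B\times F$, I would verify directly that $\phi$ and $\phi^{-1}$ restrict to $d$-isomorphisms between each fibre $p^{-1}(b)$ and $F$ that vary continuously in $b$. That weaker property is all the construction above actually uses.
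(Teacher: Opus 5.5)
Your proposal is correct and follows essentially the same route as the paper. The paper likewise identifies $E$ with $B\times F$ over $B$, identifies $dE_B\cong dF\times B$ and $\Gamma_{E,B}^n\cong \Gamma_F^n\times B$ compatibly with $\vv{\Pi}_n$ and $\vv{\pi}_n\times \mathrm{Id}_B$, pulls a partition $\{F_i\}$ of $\Gamma_F^n$ back to the pieces $F_i\times B$ with sections $s_i\times \mathrm{Id}$, and takes the other inequality from \eqref{Fibre_TC}. Your version is slightly more explicit than the paper's ``without loss of generality'' about why fibrewise $d$-paths split as (constant, directed) and why the pieces $B\times F_i$ are ENRs.
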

\begin{proof}
Without loss of generality, let us take \( E = B \times F \), with the projection maps \( p = pr_1\colon B \times F \to B \) and \( pr_2\colon B \times F \to F \). 
Under this identification, we obtain canonical \(d\)-homeomorphisms
$\varphi\colon \Gamma_{E,B}^n \xrightarrow{\cong} \Gamma_F^n\times B
\text{ and } 
\psi\colon dE_B \xrightarrow{\cong} dF\times B$
given by
$\varphi(\tilde e_1,\tilde e_2,\ldots,\tilde e_n)
=
((e_1,e_2,\ldots,e_n),b)$,
where \(p(\tilde e_i)=b\) and \(pr_2(\tilde e_i)=e_i\) for each \(i\), and
$\psi(\tilde\gamma)=(\gamma,b)$,
where \(p(\tilde\gamma(t))=b\) and \(pr_2(\tilde\gamma(t))=\gamma(t)\in F\) for all \(t\in \vv I\).
%Under this identification, we describe the following canonical \( d \)-homeomorphisms:
%\[
%\varphi\colon \Gamma_{E, B}^n \xrightarrow{\cong} \Gamma_{F}^n \times B, \quad
%\psi\colon dE_B \xrightarrow{\cong} dF \times B.
%\]

%We define \( \varphi\colon \Gamma_{E, B}^n \to \Gamma_{F}^n \times B \) by
%\[
%\varphi(\tilde{e}_1, \tilde{e}_2, \ldots, \tilde{e}_n) = ((e_1, e_2, \ldots, e_n), b),
%\]
%where \( p(\tilde{e}_i) = b \) and \( pr_2(\tilde{e}_i) = e_i \) for all \( i = 1, 2, \ldots, n \).

%Analogously, define \( \psi\colon dE_B \to dF \times B \) by
%\[
%\psi(\tilde{\gamma}) = (\gamma, b),
%\]
%where \( p(\tilde{\gamma}(t)) = b \) and \( pr_2(\tilde{\gamma}(t)) = \gamma(t) \in F \) for all \( t \in \vv{I} \).

These $d$-homeomorphisms give rise to the following commutative diagram:
\[\begin{tikzcd}
	{dE_B} & {dF\times B} \\
	{\Gamma_{E,B}^n} & {\Gamma_{F}^n\times B.}
	\arrow["\psi", from=1-1, to=1-2]
	\arrow["{\vv{\Pi}_n}"', from=1-1, to=2-1]
	\arrow["{\vv{\pi}_n\times Id_B}", from=1-2, to=2-2]
	\arrow["\varphi"', from=2-1, to=2-2]
\end{tikzcd}\]
Note from \eqref{Fibre_TC} that $\vv{\TC}_n(F)\leq \vv{\TC}_n[p\colon E\to B]$.
   To prove the reverse inequality, we assume $\vv{\TC}_n(F)=k$. Then $\Gamma_{F}^n$ can be partitioned into $k$ ENRs $F_1,F_2,\dots,F_k$ and sections $s_i$ over $F_i$ of the map $\vv{\pi}_n\colon dF\to \Gamma_{F}^n$ for each $1\leq i\leq k$. Now, for $1\leq i\leq k$, define
   \[G_i:= \varphi^{-1}(F_i\times B) \text{ and } s_i':= \psi^{-1}\circ (s_i\times Id)\circ \varphi.\]
   Clearly, the ENRs $G_1,G_2,\dots,G_k$ cover $\Gamma_{E,B}^n$ and each $s_i'$ defines continuous section over $G_i$ of the map $\vv{\Pi}_n$. This implies that
   $\vv{\TC}_n[p\colon E\to B]\leq \vv{\TC}_n(F)$.
\end{proof}
We now compare sequential directed parametrized topological complexity and sequential parametrized topological complexity. 
The following example shows that the latter can be strictly greater than the former. 
\begin{example}
\normalfont{Let \(pr_1\colon B\times X\to B\) denote the projection onto the first factor, where \(X\) is as in Proposition~\ref{TC(X)} and \(B\) is an arbitrary \(d\)-space. Then by Proposition~\ref{directed_TC_trivial_fibration} and \cite[Example 3.2]{Farber-Paul1}, we have $\vv{\TC}_n[pr_1\colon B\times X\to B]=1$, and $\TC_n[pr_1\colon B\times X\to B]=n$. Thus \[\vv{\TC}[pr_1\colon B\times X\to B]< \TC_n[pr_1\colon B\times X\to B].\]}
  \end{example}
In contrast to the previous example, the following proposition gives the reverse inequality under the assumption that the fiber of the $d$-fibration is strongly connected.
\begin{proposition}\label{comparision: directed and usual}
  Suppose the fiber $F$ of a $d$-fibration $p\colon E\to B$ is strongly connected. Then \[\TC_n[p\colon E\to B]\leq \vv{\TC}_n[p\colon E\to B].\]   
\end{proposition}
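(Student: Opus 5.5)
The plan mirrors the proof of Proposition~\ref{Comparision_TC_DirectedTC}, carried out fibrewise. Recall that the sequential parametrized topological complexity $\TC_n[p\colon E\to B]$ of Farber--Paul \cite{Farber-Paul1} is the least number of ENRs covering $E_B^n$ over each of which the fibrewise evaluation map
\[
\Pi_n\colon E^I_B\to E_B^n,\qquad \gamma\mapsto\Bigl(\gamma(0),\gamma\bigl(\tfrac{1}{n-1}\bigr),\dots,\gamma(1)\Bigr)
\]
admits a continuous section. Here $E^I_B$ is the space of all paths in $E$ whose image lies in a single fibre. Since $dE_B\subseteq E^I_B$ and $\vv{\Pi}_n$ is the restriction of $\Pi_n$, any local section of $\vv{\Pi}_n$ is automatically a local section of $\Pi_n$. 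The inequality therefore follows once we know that the domain $\Gamma_{E,B}^n$ of $\vv{\Pi}_n$ is all of $E_B^n$.

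\textbf{Step 1: $\Gamma_{E,B}^n=E_B^n$.} The inclusion $\Gamma_{E,B}^n\subseteq E_B^n$ is immediate from the definition. For the reverse inclusion, take $(e_1,\dots,e_n)\in E_B^n$ with $p(e_i)=b$ for all $i$. All $e_i$ then lie in the fibre $F_b=p^{-1}(b)$, with the $d$-structure inherited from $E$. Since the fibre is strongly connected, for each $i$ there is a $d$-path $\gamma_i$ in $F_b$ from $e_i$ to $e_{i+1}$. Because $F_b\subseteq E$ carries the induced structure, $\gamma_i\in dE$ and $p\circ\gamma_i\equiv b$. Concatenate them as $\gamma=\gamma_1*\cdots*\gamma_{n-1}$, parametrized so that $\gamma_i$ runs over $[\tfrac{i-1}{n-1},\tfrac{i}{n-1}]$. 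Closure under concatenation and non-decreasing reparametrization (Definition~\ref{directed_space}) gives $\gamma\in dE$. Moreover $p\circ\gamma$ is constant, so $\gamma\in dE_B$, and $\gamma(\tfrac{k}{n-1})=e_{k+1}$. Hence $(e_1,\dots,e_n)\in\Gamma_{E,B}^n$.

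\textbf{Step 2: transfer the sections.} Suppose $\vv{\TC}_n[p\colon E\to B]=k$, and let $F_1,\dots,F_k$ be disjoint ENRs covering $\Gamma_{E,B}^n=E_B^n$, with continuous sections $s_i\colon F_i\to dE_B$ of $\vv{\Pi}_n$. Composing each $s_i$ with the inclusion $dE_B\hookrightarrow E^I_B$, which is continuous in the compact-open topology, gives continuous sections of $\Pi_n$ over $F_i$. Thus $\TC_n[p\colon E\to B]\le k$. If $\vv{\TC}_n$ is infinite there is nothing to prove.

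\textbf{Main obstacle.} The only delicate point is in Step 1. The hypothesis ``the fibre $F$ is strongly connected'' must be read as applying to every fibre $F_b$, and with the $d$-structure restricted from $E$. For a $d$-fibration all fibres are $d$-homotopic, so I would take this as part of the hypothesis, as \cite{SDNDAS2025} does for $n=2$. One might try to deduce strong connectivity of every $F_b$ from that of a single fibre via the fibre $d$-homotopy equivalence. However, $d$-homotopy equivalences need not preserve $\Gamma$, so I would not rely on that argument and would instead state the assumption explicitly for all fibres.
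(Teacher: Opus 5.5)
Your proof is correct and follows the paper's route: the paper also first shows, as a separate lemma, that $\Gamma_{E,B}^n=E_B^n$ by concatenating $d$-paths inside the fibre $p^{-1}(b)$, and then observes that local sections of $\vv{\Pi}_n$ are local sections of $\Pi_n$. The only difference is that you explicitly assume every fibre $p^{-1}(b)$ is strongly connected, whereas the paper appeals to the fibres being $d$-homotopic together with \cite[Lemma 4.11 (1)]{SDNDAS2025}; your caution that $d$-homotopy equivalences need not preserve $\Gamma$ is a fair reason to prefer the explicit hypothesis.
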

We first prove the following lemma, which will be used to prove Proposition~\ref{comparision: directed and usual}.
%\begin{lemma}\label{Gamma_B^n_for_SC}
 %   Let $p\colon E\to B$ be a $d$-fibration. Then $\Gamma_{E,B}^n= E_B^n$, if the fiber $F$ of $p$ is strongly connected.
%\end{lemma}
\begin{lemma}\label{Gamma_B^n_for_SC}
Let $p\colon E\to B$ be a $d$-fibration with strongly connected fiber $F$. Then $\Gamma_{E,B}^n = E_B^n$.
\end{lemma}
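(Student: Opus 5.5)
The plan mirrors the first half of the proof of Proposition~\ref{Comparision_TC_DirectedTC}, carried out inside a single fibre. The inclusion $\Gamma_{E,B}^n\subseteq E_B^n$ holds by definition: if $\gamma\in dE_B$ realizes $(e_1,\dots,e_n)$, then $p\circ\gamma$ is constant, so $p(e_1)=\cdots=p(e_n)$.

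For the reverse inclusion, take $(e_1,\dots,e_n)\in E_B^n$ and let $b=p(e_1)=\cdots=p(e_n)$, so every $e_i$ lies in $F_b=p^{-1}(b)$. The aim is to produce, for each $i=1,\dots,n-1$, a $d$-path $\gamma_i\in dE$ from $e_i$ to $e_{i+1}$ whose image lies in $F_b$. Such a path has $p\circ\gamma_i\equiv b$, so $\gamma_i\in dE_B$. Then $\gamma=\gamma_1*\gamma_2*\cdots*\gamma_{n-1}$, parametrized so that $\gamma_i$ occupies $[\tfrac{i-1}{n-1},\tfrac{i}{n-1}]$, lies in $dE$ because $dE$ is closed under concatenation and non-decreasing reparametrization (Definition~\ref{directed_space}). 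It still has constant image $b$ under $p$, and it satisfies $\gamma(\tfrac{k}{n-1})=e_{k+1}$. Hence $(e_1,\dots,e_n)\in\Gamma_{E,B}^n$.

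The only real obstacle is getting the paths $\gamma_i$ from the hypothesis. ``The fibre $F$ is strongly connected'' has to mean that every fibre $F_b$, with the $d$-structure $dF_b=\{\gamma\in dE\mid \gamma(I)\subseteq F_b\}$ restricted from $E$, satisfies $\Gamma_{F_b}=F_b\times F_b$. I would read the hypothesis this way, as the paper implicitly does when it speaks of ``the fibre'' of a $d$-fibration.

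If only one fibre $F$ is assumed strongly connected, I would first reduce to that case. I would use the fibre $d$-homotopy equivalences $H_b$ that appear in the proof of Theorem~\ref{dicontractible fibre}: transport $(e_1,\dots,e_n)$ into $F$, join the images by $d$-paths there, and pull back. This step is genuinely weaker, because a $d$-homotopy equivalence need not carry $d$-paths back to $d$-paths with the prescribed endpoints. I would therefore state explicitly that strong connectivity is assumed for each fibre $F_b$, which is the same as applying the argument of Proposition~\ref{Comparision_TC_DirectedTC} fibrewise. With that, the fibrewise concatenation above finishes the proof.
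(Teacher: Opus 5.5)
Your proof is correct and follows the paper's argument. The paper also sets $b=p(e_1)=\cdots=p(e_n)$ and uses strong connectivity of $p^{-1}(b)$ itself to build the concatenated directed path inside the fibre, as in Proposition~\ref{Comparision_TC_DirectedTC}, so it implicitly adopts your reading that every fibre is strongly connected. It then closes with an appeal to the $d$-homotopy equivalence of fibres and \cite[Lemma 4.11 (1)]{SDNDAS2025}, which your observation that the concatenated path already lies in $dE_B$ makes unnecessary.
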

\begin{proof}
The inclusion \( \Gamma_{E,B}^n \subseteq E_B^n \) is immediate from the definition. To prove the reverse inclusion, let 
$(e_1, e_2, \ldots, e_{n}) \in E_B^n.$
Then \( p(e_1) = p(e_2) = \cdots = p(e_{n}) \). Suppose $p(e_i)=b$ for all $i=1,\dots,n$.
Since \( p^{-1}(b) \) is strongly connected, as established in the proof of Proposition~\ref{Comparision_TC_DirectedTC}, there exists a directed path \( \gamma \) in \( p^{-1}(b) \) such that 
$\gamma\left( \tfrac{k}{n-1} \right) = e_{k+1}$, for all $k = 0, 1, \ldots, n-1$.
As all fibers are \( d \)-homotopic to each other, it follows from \cite[Lemma 4.11 (1)]{SDNDAS2025} that \( (e_1, e_2, \ldots, e_{n}) \in \Gamma_{E,B}^n \), which completes the proof.
\end{proof}
\begin{remark}
    Lemma~\ref{Gamma_B^n_for_SC} recovers \cite[Lemma 4.11 (2)]{SDNDAS2025}.
\end{remark}
%The following result relates the $n$-directed parametrized topological complexity to the $n$-th sequential parametrized topological complexity.
\begin{proof}[Proof of Proposition~\ref{comparision: directed and usual}]
    Since $F$ is strongly connected, $\Gamma_{E,B}^n= E_B^n$ by Lemma \ref{Gamma_B^n_for_SC}. Hence, any local continuous section of $\vv{\Pi}_n\colon dE_B\to \Gamma_{E,B}^n$ can be viewed as a local continuous section of $\Pi_n\colon E_B^I\to E_B^n$. This completes the proof.
\end{proof}
We now give a $d$-fibration for which the inequality in Proposition~\ref{comparision: directed and usual} is strict.
\begin{example}
   \normalfont{Let $pr_1\colon B\times X\to B$ be the projection map onto the first factor, where $X$ is the space given in Example~\ref{directed_TC_D^2} and $B$ is a space with any $d$-structure. 
Then by Proposition~\ref{directed_TC_trivial_fibration}, we have $\vv{\TC}_n[pr_1\colon B\times X\to B]=\vv{\TC}_n(X)\geq n$, while \cite[Example 3.2]{Farber-Paul1} implies that $\TC_n[pr_1\colon B\times X\to B]=\TC_n(X)=1$.}
\end{example}

\subsection{Sequential analogue of regular d-fibrations} \label{Sequential analogue of regular d-fibrations}
Next, we establish the product inequality for the sequential version of directed parametrized topological complexity. To do so, we begin by extending the notion of \( d \)-regularity for a \( d \)-fibration, as introduced in \cite[Definition 4.13]{SDNDAS2025}.
\begin{definition}\label{n-regular d fibration}
A $d$-fibration $p\colon E\to B$ is called \emph{$n$-regular} if there exists a partition
$\Gamma_{E,B}^n= \bigcup_{i=1}^k A_i$
such that each $A_i$ is an ENR, the map $\vv{\Pi}_n\colon dE_B\to \Gamma_{E,B}^n$ admits a continuous section over each $A_i$, and the unions $A_1\cup A_2\cup\dots\cup A_r$ are closed for all $1\leq r\leq k$.
\end{definition}
%\begin{definition}\label{n-regular d fibration}
 %   A $d$-fibration $p\colon E\to B$ is called \emph{$n$-regular} if one can find a partition 
  %  \[\Gamma_{E,B}^n= \bigcup_{i=1}^k A_i,~~ k=\vv{\TC}_n[p\colon E\to B]\]
   % into ENRs such that the map $\vv{\Pi}_n\colon dE_B\to \Gamma_{E,B}^n$ admits a continuous section over each $A_i$ and, additionally, the finite unions $A_1\cup A_2\cup\dots\cup A_r$ are closed for all $1\leq r\leq k$. 
%\end{definition}
Observe that when $B = \{*\}$, the notion of an $n$-th $d$-regular $d$-fibration $p\colon E \to B$ reduces to the notion of $n$-regular $d$-space as defined in Definition~\ref{n-regular d-space}.

\begin{remark}\label{consequence of n regular d fibration}
    For an \( n \)-regular \( d \)-map with ENRs \( \{A_i\}_{i=1}^k \), the sets satisfy the condition  
\[
\Bar{A_i} \cap A_j = \emptyset \quad \text{for } i < j.
\]
\end{remark}
Let $p\colon E\to B$ and $p'\colon E'\to B'$ be two $d$-fibrations. Note that any path $\gamma\colon [0,1]\to (E\times E')_{B\times B'}$ has the form $\gamma(t)=(\gamma_E(t), \gamma_{E'}(t))$ and we declare $\gamma$ to be directed if both its coordinates are directed. i.e., $\gamma_E\in dE_B$ and $\gamma_{E'}\in dE'_{B'}$. Consequently, $\Gamma_{E\times E',\,B\times B'}^n$ is determined componentwise by $\Gamma_{E,B}^n$ and $\Gamma_{E',B'}^n$, leading to the following lemma.
\begin{lemma}\label{Gamma_for_product}
    Let $p\colon E\to B$ and $p'\colon E'\to B'$ be two $d$-fibrations. Then
    $$\Gamma_{E\times E', B\times B'}^n= \Gamma_{E,B}^n \times \Gamma_{E',B'}^n.$$
\end{lemma}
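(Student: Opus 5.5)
The plan is a double-inclusion argument, modelled on the proof of Lemma~\ref{Gamma_product}. We first identify the ambient spaces. The product $d$-fibration is $p\times p'\colon E\times E'\to B\times B'$. A tuple $((e_1,e_1'),\dots,(e_n,e_n'))$ lies in $(E\times E')_{B\times B'}^n$ exactly when $p(e_1)=\cdots=p(e_n)$ and $p'(e_1')=\cdots=p'(e_n')$. After the coordinate shuffle $(E\times E')^n\cong E^n\times E'^n$, this gives $(E\times E')^n_{B\times B'}=E_B^n\times E'^n_{B'}$. We interpret the claimed equality under this identification, as is done implicitly in Lemma~\ref{Gamma_product}.

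We also record the fibrewise path space: $d(E\times E')_{B\times B'}=dE_B\times dE'_{B'}$. By the declared product $d$-structure, a path $\gamma=(\gamma_E,\gamma_{E'})$ is directed iff both components are directed. Moreover, $(p\times p')\circ\gamma$ is constant iff $p\circ\gamma_E$ and $p'\circ\gamma_{E'}$ are both constant.

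\emph{Forward inclusion.} Let $((e_1,e_1'),\dots,(e_n,e_n'))\in\Gamma^n_{E\times E',B\times B'}$, witnessed by $\gamma\in d(E\times E')_{B\times B'}$ with $\gamma(\tfrac{k}{n-1})=(e_{k+1},e'_{k+1})$. Put $\gamma_E=pr_1\circ\gamma$ and $\gamma_{E'}=pr_2\circ\gamma$. By the previous paragraph, $\gamma_E\in dE_B$ and $\gamma_{E'}\in dE'_{B'}$. Evaluating at the points $\tfrac{k}{n-1}$ gives $(e_1,\dots,e_n)\in\Gamma^n_{E,B}$ and $(e_1',\dots,e_n')\in\Gamma^n_{E',B'}$.

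\emph{Reverse inclusion.} Given $\gamma_E\in dE_B$ and $\gamma_{E'}\in dE'_{B'}$ witnessing the two tuples, define $\gamma(t)=(\gamma_E(t),\gamma_{E'}(t))$. Then $\gamma$ is continuous and has directed components. Its projection to $B\times B'$ is the constant pair $(p\gamma_E(0),p'\gamma_{E'}(0))$, so $\gamma\in d(E\times E')_{B\times B'}$. Its values at $\tfrac{k}{n-1}$ are the required pairs.

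There is no real obstacle here. The only point needing care is the bookkeeping in the identification $(E\times E')^n\cong E^n\times E'^n$. One must check that the fibrewise condition over $B\times B'$ splits into the two separate fibrewise conditions. This is immediate, because equality in $B\times B'$ is coordinatewise.
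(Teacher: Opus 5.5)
Your proposal is correct and follows essentially the same argument as the paper. For one inclusion you project a witnessing fibrewise $d$-path onto its two components, and for the other you pair two witnessing paths. The paper does the same, phrasing it as well-definedness and surjectivity (injectivity being trivial) of the coordinate-shuffle map $\Phi$.
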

\begin{proof}
    Define
    $\Phi\colon \Gamma_{E\times E', B\times B'}^n \to \Gamma_{E,B}^n \times \Gamma_{E',B'}^n$ by
    $$\Phi\left((e_1,e_1'),(e_2,e_2'),\dots,(e_n,e_n')\right):=\left((e_1,e_2,\dots,e_n),(e_1',e_2',\dots,e_n')\right).$$

    If $\left((e_1,e_1'),(e_2,e_2'),\dots,(e_n,e_n')\right)\in \Gamma_{E\times E', B\times B'}^n$, then 
    $(p\times p')(e_1,e_1')=\cdots= (p\times p')(e_n,e_n')$
    and there exists $\gamma\in d(E\times E')_{B\times B'}$ such that $\gamma(\tfrac{k}{n-1})=(e_{k+1},e_{k+1}')$ for $i=0,1,\dots,n-1$.
    Clearly, $pr_1\circ \gamma\in dE_B$ and $pr_2\circ \gamma\in dE'_{B'}$ satisfying $(pr_1\circ\gamma)(\tfrac{k}{n-1})=e_{k+1}$ and $(pr_2\circ\gamma)(\tfrac{k}{n-1})=e_{k+1}'$ for $i=0,1,\dots,n-1$.
    This implies that $\left((e_1,e_2,\dots,e_n),(e_1',e_2',\dots,e_n')\right)\in \Gamma_{E,B}^n \times \Gamma_{E',B'}^n$. Therefore, $\Phi$ is a well-defined map.

Suppose $\bigl((e_1,\ldots,e_n),(e_1',\ldots,e_n')\bigr)\in
\Gamma_{E,B}^n\times\Gamma_{E',B'}^n$. Then there exist
$\gamma\in dE_B$ and $\gamma'\in dE'_{B'}$ such that
$\gamma(\tfrac{k}{n-1})=e_{k+1}$,
$\gamma'(\tfrac{k}{n-1})=e'_{k+1}$ for $0\le k\le n-1$, with
$p(e_1)=\cdots=p(e_n)$ and $p(e_1')=\cdots=p(e_n')$.
%Suppose $\left((e_1,e_2,\dots,e_n),(e_1',e_2',\dots,e_n')\right)\in \Gamma_{E,B}^n \times \Gamma_{E',B'}^n$. Then there exist $\gamma\in dE_B$, $\gamma'\in dE'_{B'}$ such that $\gamma(\tfrac{k}{n-1})=e_{k+1}$, $\gamma'(\tfrac{k}{n-1})=e_{k+1}'$ for $i=0,1,\dots,n-1$ and $p(e_1)=\cdots=p(e_n)$, $p(e_1')=\cdots=p(e_n')$. and
     %\[\exists ~ \gamma'\in dE'_{B'} \text{ such that } \gamma'(\tfrac{k}{n-1})=e_{k+1}' \text{ for } i=0,1,\dots,n-1 \text{ and } p(e_1')=\cdots=p(e_n').\]
     Note that $\gamma\times \gamma'\in d(E\times E')_{B\times B'}$ with $(\gamma\times \gamma')(\frac{k}{n-1})=(e_{k+1},e_{k+1}')$ for $k=0,1,\dots,n-1$ and $(p\times p')(e_1,e_1')=\cdots=(p\times p')(e_n,e_n')$. Hence, 
     $\left((e_1,e_1'),(e_2,e_2'),\dots,(e_n,e_n')\right)\in \Gamma_{E\times E', B\times B'}^n.$
     Thus, the map $\Phi$ is surjective. Similarly, we can show that the map $\Phi$ is also injective.
 Therefore, $\Gamma_{E\times E', B\times B'}^n= \Gamma_{E,B}^n \times \Gamma_{E',B'}^n$.
\end{proof}
We now establish the product inequality for sequential directed parametrized topological complexity $\vv{\TC}_n [-]$.
\begin{proposition} \label{product inequality_parametrized}
     Let $p\colon E\to B$ and $p'\colon E'\to B'$ be two $n$-regular $d$-fibrations. Then
     \[\vv{\TC}_n[p\times p'\colon E\times E'\to B\times B']\leq \vv{\TC}_n[p\colon E\to B]+ \vv{\TC}_n[p'\colon E'\to B']-1.\]
\end{proposition}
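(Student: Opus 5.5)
The plan is to mirror the proof of Proposition~\ref{TC_Product}, replacing Lemma~\ref{Gamma_product} by Lemma~\ref{Gamma_for_product} and $n$-regular $d$-spaces by $n$-regular $d$-fibrations.

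Write $\vv{\TC}_n[p\colon E\to B]=k+1$ and $\vv{\TC}_n[p'\colon E'\to B']=l+1$. By $n$-regularity (Definition~\ref{n-regular d fibration}), take ENR partitions
\[
\Gamma_{E,B}^n=\bigcup_{i=0}^k A_i \quad\text{and}\quad \Gamma_{E',B'}^n=\bigcup_{j=0}^l A'_j.
\]
Each $A_i$ carries a continuous section $s_i\colon A_i\to dE_B$ of $\vv{\Pi}_n$, and each $A'_j$ carries a continuous section $s'_j\colon A'_j\to dE'_{B'}$. The partial unions $A_0\cup\dots\cup A_r$ and $A'_0\cup\dots\cup A'_r$ are closed. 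I will take the partitions to have exactly $\vv{\TC}_n$ pieces, as in Definition~\ref{n-regular d-space}.

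By Lemma~\ref{Gamma_for_product}, the products $A_i\times A'_j$ form an ENR partition of $\Gamma_{E\times E',B\times B'}^n$. On each piece define
\[
\sigma_{ij}(\mathbf{e},\mathbf{e}')(t)=\bigl(s_i(\mathbf{e})(t),\,s'_j(\mathbf{e}')(t)\bigr).
\]
This is a fibrewise directed path, because both coordinates are directed and lie over constant base points, so $(p\times p')\circ\sigma_{ij}$ is constant. It is continuous, since the compact-open topology on paths into a product is the product topology. It is also a section of $\vv{\Pi}_n$ for $p\times p'$, since evaluation at $\tfrac{m}{n-1}$ is computed coordinatewise.

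Next set $W_t=\bigcup_{i+j=t}A_i\times A'_j$ for $0\le t\le k+l$. These $k+l+1$ sets are pairwise disjoint and cover $\Gamma^n_{E\times E',B\times B'}$. The key point is that the pieces $A_i\times A'_j$ with $i+j=t$ are pairwise separated: by Remark~\ref{consequence of n regular d fibration}, $\overline{A_i}\cap A_{i'}=\emptyset$ for $i<i'$. For $i+j=i'+j'=t$ with $i<i'$ we have $j>j'$, so
\[
\overline{A_i\times A'_j}\cap(A_{i'}\times A'_{j'})\subseteq(\overline{A_i}\cap A_{i'})\times(\overline{A'_j}\cap A'_{j'})=\emptyset,
\]
and symmetrically in the other order. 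Hence each $A_i\times A'_j$ is open and closed in $W_t$. It follows that $W_t$ is an ENR, being a finite disjoint union of clopen ENRs, and that the $\sigma_{ij}$ glue to a continuous section over $W_t$. This yields the bound $(k+l)+1=\vv{\TC}_n[p]+\vv{\TC}_n[p']-1$.

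The main obstacle is this separation and ENR step. It must be checked that the closures in Remark~\ref{consequence of n regular d fibration} are taken in $\Gamma^n_{E,B}$, so that the product of closures is the closure of the product in $\Gamma^n_{E\times E',B\times B'}$, and that the regular partition really has $\vv{\TC}_n$ pieces. If Definition~\ref{n-regular d fibration} only provides some number $k$ of pieces, I would read the hypothesis as requiring $k=\vv{\TC}_n$, as in the commented version of that definition.
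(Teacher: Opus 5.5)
Your proposal is correct and follows essentially the same route as the paper: you form $W_t=\bigcup_{i+j=t}A_i\times A'_j$ from the regular partitions via Lemma~\ref{Gamma_for_product}, take product sections $s_i\times s'_j$, and glue them using Remark~\ref{consequence of n regular d fibration}; your indexing from $0$ corresponds to the paper's range $t=2,\dots,k+l$. Your explicit clopen-separation check and your reading that the regular partition has exactly $\vv{\TC}_n$ pieces are both things the paper's proof uses without comment, since it sets $k=\vv{\TC}_n[p\colon E\to B]$ and then invokes the partition of Definition~\ref{n-regular d fibration}.
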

\begin{proof}
    Let \( \vv{\TC}_n[p\colon E \to B] = k \) and \( \vv{\TC}_n[p'\colon E' \to B'] = l \), and suppose we have partitions
$\Gamma_{E,B}^n = \bigcup_{i=1}^k A_i$ and $\Gamma_{E',B'}^n = \bigcup_{i=1}^l B_i$
as described in Definition~\ref{n-regular d fibration}. That is, each \( A_i \) and \( B_i \) is an ENR, and the maps
$\vv{\Pi}_n^B\colon dE_B \to \Gamma_{E,B}^n$ and $\vv{\Pi}_n^{B'}\colon dE'_{B'} \to \Gamma_{E',B'}^n$
admit continuous sections \( s_i \) over \( A_i \) and \( s_i' \) over \( B_i \), respectively. Moreover, Since both $d$-fibrations $p\colon E\to B$ and $p'\colon E'\to B'$ are $n$-regular, for each \( r = 1,2,\dots,k \), the union \( \bigcup_{i=1}^r A_i \) is closed, and similarly, for each \( r = 1,2,\dots,l \), the union \( \bigcup_{i=1}^r B_i \) is closed.

Consider the sets $\bigcup_{i+j=t} (A_i\times B_j)=G_t\subset \Gamma_{E\times E'. B\times B'}^n$,
%\begin{equation}\label{union}
%\bigcup_{i+j=t} (A_i\times B_j)=G_t\subset \Gamma_{E\times E'. B\times B'}^n,    
%\end{equation}
where $t=2,\dots,k+l$. clearly, $\{G_t\}_{t=2}^{k+l}$ are pairwise disjoint and form an ENR over of $\Gamma_{E \times E', B \times B'}^n $ by Lemma~\ref{Gamma_for_product}.
%Using Lemma \ref{Gamma_for_product}, we note that the sets \( A_i \times B_j \), where \( 1 \leq i \leq k \) and \( 1 \leq j \leq l \), form an ENR partition of \( \Gamma_{E \times E', B \times B'}^n \).
Then continuous sections $s_i\colon A_i\to dE_B$ and $s_j'\colon B_j\to dE'_{B'}$ obviously produce continuous sections $\sigma_{i,j}:= s_i\times s_j'\colon A_i\times B_j\to d(E\times E')_{B\times B'}$ of $\vv{\Pi}_n^B\times \vv{\Pi}_n^{B'}$ over $A_i\times B_j$.
 Now by Remark~\ref{consequence of n regular d fibration}, the collection
$\sqcup_{i+j=t} \sigma_{i,j}\colon G_t\to d(E\times E')$ gives a continuous section of $\vv{\Pi}_n\colon d(E\times E')\to (E\times E')_{B\times B'}^n$.
%We observe that the terms of the union \eqref{union} are pairwise disjoint and open in $G_t$ (due to Remark \ref{consequence of n regular d fibration}) and hence the collection of continuous maps $\sigma_{i,j}$ defines a continuous section $G_t\to d E\times E'_{B\times B'}$. 
Consequently, we obtain 
$\vv{\TC}_n[p\times p'\colon E\times E'\to B\times B']\leq k+l-1$.
\end{proof}
We now compare usual parametrized topological complexity with the directed LS category in the following proposition.
\begin{proposition}
    Let $p\colon E\to B$ be a fibration where $E_B^r$ is a directed space with an initial point. Then 
    $\TC_r[p\colon E\to B]\leq \vv{\ct}(E_B^r)$.
\end{proposition}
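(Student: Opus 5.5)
The plan is to copy the proof of Proposition~\ref{Properties_of_higher_TC_1}, with the free path fibration of $X$ replaced by the fibrewise path fibration $\Pi_r\colon E_B^I\to E_B^r$. Here $E_B^I$ denotes the space of paths $\gamma$ in $E$ with $p\circ\gamma$ constant. Let $\mathbf{e}_0=(e_0^1,\dots,e_0^r)\in E_B^r$ be the initial point, with $p(e_0^j)=b_0$ for all $j$. Set $k=\vv{\ct}(E_B^r)$. By definition, $E_B^r=U_1\cup\dots\cup U_k$ with ENRs $U_i$, and there are continuous sections $\sigma_i\colon U_i\to d_0(E_B^r)$ of the evaluation map $e_{E_B^r}\colon \gamma\mapsto \gamma(1)$. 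For each $i$, I will build a continuous section of $\Pi_r$ over the same set $U_i$. Since these are the same ENRs, this gives $\TC_r[p\colon E\to B]\leq k$.

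\textbf{Step 1: components of a directed path.} Take $\mathbf{x}=(x_1,\dots,x_r)\in U_i$, so $p(x_j)=b$ for all $j$. Write $\sigma_i(\mathbf{x})=(\gamma_1,\dots,\gamma_r)$, where $\gamma_j=pr_j\circ\sigma_i(\mathbf{x})$. Each $\gamma_j$ is a continuous path in $E$ from $e_0^j$ to $x_j$, and the $\gamma_j$ depend continuously on $\mathbf{x}$. The point to note is that $\sigma_i(\mathbf{x})(t)\in E_B^r$ for every $t$. So the coordinates $\gamma_1(t),\dots,\gamma_r(t)$ always lie in a common fibre $p^{-1}(\beta(t))$, where $\beta=p\circ\gamma_1=\dots=p\circ\gamma_r$ is a path in $B$ from $b_0$ to $b$. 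The individual $\gamma_j$ are not fibrewise, so the plain concatenation $\gamma_1^{-1}*\gamma_2*\gamma_2^{-1}*\cdots*\gamma_r$ used in Proposition~\ref{Properties_of_higher_TC_1} projects to $\beta^{-1}*\beta*\cdots$ in $B$. That projection is not constant, so this path is not in $E_B^I$.

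\textbf{Step 2: the main obstacle, making the path fibrewise.} I see two options.
\begin{enumerate}
\item Use the common parameter. The whole tuple moves along the single base path $\beta$, so the path in $E_B^r$ given by $t\mapsto \sigma_i(\mathbf{x})(t)$ is a single path of tuples lying over $\beta(t)$. I would apply a fibrewise transport to bring everything into the fibre over $b$, i.e.\ a lifting function for the fibration $p$ with respect to $\beta$. I would transport the tuple $\mathbf{e}_0$ along $\beta$ and then use $\sigma_i(\mathbf{x})$ run backwards.
\item Simpler, under an extra assumption. If $p$ (as a fibration) admits a continuous lifting function $\lambda$, then the straightening map $\gamma\mapsto \lambda(\gamma(1), (p\circ\gamma)^{-1})$-style fibrewise correction turns each $\gamma_j$ into a path $\tilde\gamma_j$ in the fibre $p^{-1}(b)$ from $y_j$ to $x_j$. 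Here $y_j$ is the transport of $e_0^j$ along $\beta$.
\end{enumerate}
The crucial fact is that all $r$ corrections use the same base path $\beta$. Because of this, the endpoints $y_j$ all come from transporting the single tuple $\mathbf{e}_0$. I would therefore use the homotopy lifting property of $p$ applied to the path $\sigma_i(\mathbf{x})$ in $E_B^r$ viewed over $\beta$. This produces a continuous homotopy $\Phi_{\mathbf{x}}\colon [0,1]\times[0,1]\to E$ with $p\circ\Phi_{\mathbf{x}}(s,t)=\beta(t)$, which contracts each $\gamma_j$ rel endpoints into the fibre over $b$. Equivalently, it produces fibrewise paths $\delta_j$ from $\delta(e_0^j)$ to $x_j$ inside $p^{-1}(b)$, depending continuously on $\mathbf{x}$. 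The continuity in $\mathbf{x}$ comes from choosing one global lifting function, which exists since $p$ is a Hurewicz fibration.

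\textbf{Step 3: concatenate fibrewise.} Connecting the $\delta_j$ requires fibrewise paths between the transported base points $\delta(e_0^j)$ and $\delta(e_0^{j+1})$. Since the $e_0^j$ are distinct in general, I would rather use $\delta_j^{-1}*\delta_{j+1}$ only when $e_0^j=e_0^{j+1}$. So I would first reduce to the diagonal initial point $e_0^1=\dots=e_0^r=e_0$. This reduction should be harmless: being initial, $\mathbf{e}_0$ directed-reaches the diagonal tuple $(x_1,\dots,x_1)$, and vice versa I expect to replace $\mathbf{e}_0$ by $(e_0,\dots,e_0)$ by prefixing fixed paths. With this in place, set
\[
s_i(\mathbf{x})=\delta_1^{-1}*\delta_1*\delta_1^{-1}*\delta_2*\cdots*\delta_{r-1}^{-1}*\delta_r,
\]
reparametrized exactly as in Proposition~\ref{Properties_of_higher_TC_1} so that $s_i(\mathbf{x})(\tfrac{j-1}{r-1})=x_j$. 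This path lies in $p^{-1}(b)$, it is continuous in $\mathbf{x}$, and it is a section of $\Pi_r$ over $U_i$, which completes the argument.
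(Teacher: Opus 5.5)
Your argument works in substance, but it takes a different route from the paper. The paper's proof is two lines long: it chains the classical bound $\TC_r[p\colon E\to B]\leq \ct(E_B^r)$ with the comparison $\ct(X)\leq\vv{\ct}(X)$ for directed spaces with an initial point.

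Your construction unpacks both steps by hand. A section $\sigma_i\colon U_i\to d_0(E_B^r)$ is just an explicit contraction of $U_i$ inside $E_B^r$ to $\mathbf{e}_0$; this is the $\ct\leq\vv{\ct}$ step, and directedness plays no further role after it. Your lifting-function argument is then a direct proof of $\TC_r[p]\leq\ct(E_B^r)$ for sets that come with such contractions.

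The paper's route is shorter, but it leans on the classical inequality. That inequality is proved by lifting a contraction through the fibration $\Pi_r$, which requires the base point to lie in the image of $\Pi_r$. Your route is self-contained and yields sections over the same ENRs $U_i$. Your diagonal normalization also avoids any separate appeal to path-connectedness of the fibres: transports of one point along one base path coincide, so all the $\delta_j$ start at the same place.

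Three points need tightening.

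\textbf{Step~2.} The homotopy you write, with $p\circ\Phi_{\mathbf{x}}(s,t)=\beta(t)$, is vertical and cannot move $\gamma_j$ into $p^{-1}(b)$. Also, $\gamma_j$ cannot be deformed rel endpoints into $p^{-1}(b)$, because $\gamma_j(0)=e_0^j$ lies over $b_0$. What you want is to lift $H(s,t)=\beta(s+t-st)$ with initial path $\gamma_j$, using one fixed lifting function $\lambda$, so that $\Phi(s,t)=\lambda\bigl(\gamma_j(t),H(\cdot,t)\bigr)(s)$. Then both $\Phi(1,\cdot)$ and $s\mapsto\Phi(s,1)$ lie over $b$. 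So $\delta_j=\Phi(1,\cdot)*\Phi(\cdot,1)^{-1}$ is a fibrewise path from $\lambda(e_0^j,\beta)(1)$ to $x_j$, and it is continuous in $\mathbf{x}$.

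\textbf{Step~3, the normalization.} The ``vice versa'' is not available in the directed sense, but you do not need it. Since $\mathbf{e}_0$ is initial, there is a directed path $\omega$ in $E_B^r$ from $\mathbf{e}_0$ to $(e_0^1,\dots,e_0^1)$. Prefixing its reverse to $\sigma_i(\mathbf{x})$ gives a continuous family of ordinary paths in $E_B^r$ from a fixed diagonal tuple to $\mathbf{x}$. That is all the undirected $\TC_r$ requires. All components then start at $e_0^1$ and lie over the single base path $(p\circ\omega_1)^{-1}*\beta$, so all the $\delta_j$ start at one common point.

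\textbf{Step~3, the concatenation.} It should read $(\delta_1^{-1}*\delta_2)*(\delta_2^{-1}*\delta_3)*\cdots*(\delta_{r-1}^{-1}*\delta_r)$, with the $j$-th block parametrized on $[\tfrac{j-1}{r-1},\tfrac{j}{r-1}]$. Your current formula has an extra $\delta_1^{-1}*\delta_1$ loop at the start.
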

\begin{proof}
Note that we have
$\TC_r[p\colon E\to B]\leq \ct(E_B^r).$
Since \(\ct(X)\leq \vv{\ct}(X)\) for every directed space \(X\) with an initial point \cite[Proposition~3.4]{SDNDAS2025}, the desired inequality follows immediately.
\end{proof}

\subsection{Fibrewise \texorpdfstring{$n$}{}-basic dihomotopy equivalence} \label{fibrewise n-dihomotopy equivalence}
We now introduce the notion of fibrewise \( n \)-basic dihomotopy equivalence, under which we show that the sequential version of directed parametrized topological complexity is an invariant. 

In \cite{Farber-Paul1}, it was shown that sequential parametrized topological complexity is a fibrewise homotopy invariant. The following example shows that the analogous statement does not hold in the directed setting.

\begin{example}
\normalfont{Let $p_1\colon X\times B\to B$ and $p_2\colon \vv{\mathbb{S}^1}\times B\to B$ denote the projection maps onto \(B\), where \(\vv{\mathbb{S}^1}\) is the directed circle described in Example~\ref{directed_circle}, \(X\) is the directed space considered in Proposition~\ref{TC(X)}, and \(B\) is an arbitrary directed space. Then \(p_1\) and \(p_2\) are fibrewise homotopy equivalent.
Using Propositions~\ref{directed_TC_trivial_fibration} and~\ref{TC(X)}, together with Example~\ref{directed_circle}, we obtain
 $\vv{\TC}_n[p_1\colon X\times B\to B]=1$ while $\vv{\TC}_n[p_2\colon \vv{\mathbb{S}^1}\times B\to B]=n$.
Hence sequential directed parametrized topological complexity is not a fibrewise homotopy invariant.}
\end{example}
This motivates the following definition of fibrewise n-basic dihomotopy equivalence between two $d$-fibrations. We begin by recalling the following definitions.
\begin{definition}{\cite{SDNDAS2025}}
Let \( p\colon E \to B \) and \( p'\colon E' \to B \) be two \( d \)-fibrations.
\begin{itemize}
    \item[(a)] A \emph{fibrewise $d$-map} from \( p\colon E \to B \) to \( p'\colon E' \to B \) is a $d$-map \( f\colon E \to E' \) such that \( p' \circ f = p \).
    
    \item[(b)] A \emph{fibrewise \( d \)-homotopy} is a \( d \)-map \( F\colon E \times \vv{I} \to E' \) such that \( p'\big(F(-,t)\big) = p \) for all \( t \in \vv{I} \). In particular, \( F \) is a \( d \)-homotopy between the fibrewise maps \( F(-,0) \) and \( F(-,1) \).
\end{itemize}
\end{definition}
\begin{definition}\label{fibrewise n-basic dihomotopy equivalence}
Two $d$-fibrations $p\colon E\to B$ and $p'\colon E'\to B$ are said to be \emph{fibrewise $n$-basic dihomotopy equivalent} if the following conditions hold:
\begin{enumerate}
    \item There exist fibrewise $d$-maps $f\colon E\to E'$ and $g\colon E'\to E$ such that there are fibrewise $d$-homotopies from $f\circ g$ to $Id_{E'}$ and from $g\circ f$ to $Id_E$.
    
    \item There exists a continuously graded map \( F\colon dE' \to dE \) fitting into the following commutative diagram:
    \[
    \begin{tikzcd}
    dE && dE' \\
    & dB
    \arrow["df", bend left=20, from=1-1, to=1-3]
    \arrow["dp"', from=1-1, to=2-2]
    \arrow["F", bend left=20, from=1-3, to=1-1]
    \arrow["dp'", from=1-3, to=2-2]
    \end{tikzcd}
    \]
    such that for every \( (e_1, e_2, \dots, e_n) \in \Gamma_{E,B}^n \), the map
    $F_{e_1, \dots, e_n} \colon dE'\big(f(e_1), \dots, f(e_n)\big) \to dE(e_1, \dots, e_n)$
    is a homotopy equivalence, with homotopy inverse given by \( df_{e_1, \dots, e_n} \).

    \item There exists a continuously graded map \( G\colon dE \to dE' \) fitting into the following commutative diagram:
    \[
    \begin{tikzcd}
    dE' && dE \\
    & dB
    \arrow["dg", bend left=20, from=1-1, to=1-3]
    \arrow["dp'"', from=1-1, to=2-2]
    \arrow["G", bend left=20, from=1-3, to=1-1]
    \arrow["dp", from=1-3, to=2-2]
    \end{tikzcd}
    \]
    such that for every \( (e_1', e_2', \dots, e_n') \in \Gamma_{E',B}^n \), the map
    $G_{e_1', \dots, e_n'} \colon dE\big(g(e_1'), \dots, g(e_n')\big) \to dE'(e_1', \dots, e_n')$
    is a homotopy equivalence, with homotopy inverse given by \( dg_{e_1', \dots, e_n'} \).
\end{enumerate}
\end{definition}
\begin{theorem} %\label{n-dihomotopy equi}
    If $d$-fibrations $p\colon E\to B$ and $p'\colon E'\to B$ are fibrewise $n$-basic dihomotopy equivalent, then 
    \[\vv{\TC}_n[p\colon E\to B]= \vv{\TC}_n[p'\colon E'\to B].\]
\end{theorem}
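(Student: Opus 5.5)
By symmetry of Definition~\ref{fibrewise n-basic dihomotopy equivalence} it suffices to prove the single inequality $\vv{\TC}_n[p'\colon E'\to B]\leq \vv{\TC}_n[p\colon E\to B]$. Exchanging the roles of $(f,F)$ and $(g,G)$ then gives the reverse inequality. The plan mirrors the proof of Proposition~\ref{n-dihmotopy invariance}, with an extra check that every construction stays inside the fibrewise path spaces $dE_B$ and $dE'_B$.

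Suppose $\vv{\TC}_n[p\colon E\to B]=k$. Take a partition $\Gamma_{E,B}^n=F_1\sqcup\cdots\sqcup F_k$ into ENRs with continuous sections $s_i\colon F_i\to dE_B$ of $\vv{\Pi}_n$. First I will check that $g^n=g\times\cdots\times g$ maps $\Gamma_{E',B}^n$ into $\Gamma_{E,B}^n$. Let $(e_1',\dots,e_n')\in\Gamma_{E',B}^n$ be witnessed by $\gamma'\in dE'_B$. Then $g\circ\gamma'$ is a $d$-path, because $g$ is a $d$-map. It is fibrewise, because $p\circ g=p'$ and $p'\circ\gamma'$ is constant. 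It passes through $g(e_{k+1}')$ at time $k/(n-1)$. Next set
\[
A_i:=\{(e_1',\dots,e_n')\in\Gamma_{E',B}^n \mid (g(e_1'),\dots,g(e_n'))\in F_i\}=(g^n)^{-1}(F_i)\cap\Gamma_{E',B}^n .
\]
The $A_i$ are pairwise disjoint and cover $\Gamma_{E',B}^n$. As in Proposition~\ref{n-dihmotopy invariance}, I will take each $A_i$ to be an ENR, or empty. On $A_i$ define
\[
t_i(e_1',\dots,e_n'):=G_{e_1',\dots,e_n'}\bigl(s_i(g(e_1'),\dots,g(e_n'))\bigr).
\]
This is defined because $s_i(g(e_1'),\dots,g(e_n'))\in dE(g(e_1'),\dots,g(e_n'))$, which is the domain of the grading $G_{e_1',\dots,e_n'}$.

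Then I will verify three properties of $t_i$. First, $t_i$ is continuous: $s_i$ and $g$ are continuous, and the grading $G_{e_1',\dots,e_n'}$ varies continuously in $(e_1',\dots,e_n')$ in the compact-open topology. Composing a continuously varying family of maps with a continuous choice of argument is continuous by the evaluation map, so this step is formal. Second, $t_i(e_1',\dots,e_n')$ lies in $dE'(e_1',\dots,e_n')$ by the definition of the grading, hence $\vv{\Pi}_n\circ t_i=\mathrm{Id}_{A_i}$. Third, $t_i$ lands in $dE'_B$. Here the commutative triangle in condition (3) gives $dp'\circ G=dp$. Since $s_i(\cdots)\in dE_B$, its image under $dp$ is a constant path, so $p'\circ t_i(e_1',\dots,e_n')$ is constant as well. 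This yields $\vv{\TC}_n[p'\colon E'\to B]\leq k$.

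I expect the main obstacle to be the point-set issue that $A_i$ is an ENR. It is a preimage of an ENR under a continuous map, which need not be an ENR in general. The earlier Proposition~\ref{n-dihmotopy invariance} simply asserts this, and I would follow that precedent. If more care is needed, I would work in the setting where ENRs can be replaced by arbitrary subsets, or by open or closed sets of the ENR $\Gamma_{E',B}^n$, for which the preimage is automatically of the same type. A secondary point is that the fibrewise $d$-homotopies in condition (1) are not actually used in the section construction. Only $g$ and the graded map $G$ enter, so the homotopy-equivalence clauses matter only for symmetry of the hypotheses.
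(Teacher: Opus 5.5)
Your proposal is correct and is essentially the paper's own proof. The paper likewise pulls back each ENR with a section along $\bar g=(g\times\cdots\times g)|_{\Gamma_{E',B}^n}$, sets $s'=G\circ s\circ\bar g$, uses $dp'\circ G=dp$ to see that $G$ carries $dE_B$ into $dE'_B$, and gets the reverse inequality from $F$. Your check that $\bar g$ actually lands in $\Gamma_{E,B}^n$, via the fibrewise $d$-path $g\circ\gamma'$, is more complete than the paper's, which only verifies that the points $g(e_i')$ lie in a common fibre. The paper takes the ENR property of the preimages and the continuity of $e'\mapsto G_{e'}(s(\bar g(e')))$ for granted exactly as you do.
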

\begin{proof}
    Suppose $p\colon E\to B$ and $p'\colon E'\to B$ are fibrewise $n$-basic dihomotopy equivalent, we have $d$-maps $f\colon E\to E'$ and $g\colon E'\to E$ which are $d$-homotopy inverses of each other satisfying the following commutative diagram:
\[\begin{tikzcd}
	E && {E'} \\
	& {B.}
	\arrow["f", bend left=20, from=1-1, to=1-3]
	\arrow["p"', from=1-1, to=2-2]
	\arrow["g", shift right, bend left=20, from=1-3, to=1-1]
	\arrow["{p'}", from=1-3, to=2-2]
\end{tikzcd}\]

Additionally, there is a $d$-map $G\colon dE \to dE'$ satisfying condition (3) of Definition \ref{fibrewise n-basic dihomotopy equivalence}. We claim that $G$ restricts to a map from $dE_B$ onto $dE'_B$. Let $\gamma \in dE_B$. Then $(dp' \circ G)(\gamma) = dp(\gamma)$, which implies $p'(G(\gamma)(t)) = p(\gamma(t))$ for all $t \in \vv{I}$. Hence $G(\gamma) \in dE'_B$. We continue to denote the restriction of $G$ to $dE_B$ by $G$. This yields the following commutative diagram:
\[\begin{tikzcd}
	{dE'_{B}} & {dE_B} & {dE'_{B}} \\
	{\Gamma_{E',B}^n} & {\Gamma_{E,B}^n} & {\Gamma_{E',B}^n,}
	\arrow["dg", from=1-1, to=1-2]
	\arrow["{\vv{\Pi'}_n}"', from=1-1, to=2-1]
	\arrow["G", from=1-2, to=1-3]
	\arrow["{\vv{\Pi}_n}", from=1-2, to=2-2]
	\arrow["{\vv{\Pi'}_n}", from=1-3, to=2-3]
	\arrow["{\bar{g}}"', from=2-1, to=2-2]
	\arrow["{\bar{f}}"', from=2-2, to=2-3]
\end{tikzcd}\] where the $d$-map $\bar{g}$ is defined as $\bar{g}:= {\overbrace{(g \times g \times \cdots \times g)}^{n\ \text{times}}}\vert_{\Gamma_{E',B}^n}$ and $\bar{f}$ is also defined similarly. Note that these maps are well-defined as they are fibrewise maps.

Let $(e_1',e_2',\dots,e_n')\in \Gamma_{E',B}^n$. Then $p'(e_1')=p'(e_2')=\cdots=p'(e_n')$. Suppose $p'(e_i')=b$ for all $i=1,2,\dots,n$. Therefore $p(g(e_i'))=b$ for all $i=1,2,\dots,n$. This implies that $(g(e_1'),g(e_2'),\dots,g(e_n'))\in \Gamma_{E,B}^n$.

Let $U\subseteq \Gamma_{E,B}^n$ be an ENR such that there is a continuous section $s$ of $\vv{\Pi}_n$ on $U$. Define $V:= \bar{g}^{-1}(U)$ and $s'\colon V\to dE'_B$ by $s'(v):= G \circ s\circ \bar{g}\vert_{V}(v)$ for $v\in V$. 
For $(e_1',e_2',\dots,e_n')\in \Gamma_{E',B}^n$, the path $\gamma:=s(g(e_1'),g(e_2'),\dots,g(e_n'))\in dE_B$, i.e., $\gamma\in dE_B(g(e_1'),g(e_2'),\dots,g(e_n'))$. Since $G(\gamma)\in dE'_B(e_1',e_2',\dots,e_n')$, we get $\vv{\Pi'}_n(G(\gamma))=(e_1',e_2',\dots,e_n')$. Since $s$ is continuous on $U$, $\bar{g}$ is continuous on $\Gamma_{E',B}^n$, $G$ is continuous and graded, $s'$ is continuous on $V$. Therefore $\vv{\TC}_n[p'\colon E'\to B]\leq \vv{\TC}_n[p\colon E\to B]$.

  Using the $d$-map $F\colon dE' \to dE$ satisfying condition~(2) of Definition~\ref{fibrewise n-basic dihomotopy equivalence} and following the same line of reasoning, we obtain the reverse inequality. This completes the proof.
\end{proof}
Next, we show that equality holds in Proposition~\ref{comparision: directed and usual} for a particular directed structure introduced in \cite[Definition 5.1]{SDNDAS2025}. Suppose \(p\colon E\to B\) is a fibration, where \((B,dB)\) is a directed space. We define a directed structure on \(E\) by
\[
\gamma\in dE \quad \text{if and only if} \quad p\circ \gamma\in dB.
\]
With this directed structure, \(p\colon E\to B\) becomes a \(d\)-fibration. Moreover, the induced directed structure on the fibre \(F\) of \(p\) makes \(F\) a strongly connected directed space; see \cite[Remark 5.2]{SDNDAS2025}.
\begin{proposition}\label{equality}
Let $p\colon E\to B$ be a fibration, where $(B,dB)$ is a directed space, and let $(E,dE)$ be the directed space defined above. Then
\[
\vv{\TC}_n[p\colon E\to B]=\TC_n[p\colon E\to B].
\]
\end{proposition}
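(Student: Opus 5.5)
Two inequalities. The lower bound $\TC_n[p\colon E\to B]\leq \vv{\TC}_n[p\colon E\to B]$ comes from Proposition~\ref{comparision: directed and usual}. By \cite[Remark 5.2]{SDNDAS2025}, the induced directed structure on the fibre $F$ is strongly connected, so Proposition~\ref{comparision: directed and usual} applies directly. Along the way, Lemma~\ref{Gamma_B^n_for_SC} gives $\Gamma_{E,B}^n=E_B^n$, so both invariants are computed over the same base space.

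For the upper bound, the key observation is that directed fibrewise paths are exactly the ordinary fibrewise paths. If $\gamma\in E_B^I$, meaning $p\circ\gamma$ is constant, then $p\circ\gamma$ is a constant path in $B$. Constant paths lie in $dB$ by Definition~\ref{directed_space}, so $\gamma\in dE$ by the definition of the directed structure on $E$. Conversely, $dE_B\subseteq E_B^I$ trivially. Hence
\[
dE_B=E_B^I,
\]
as subspaces of $E^I$ with the compact-open topology. Moreover, $\vv{\Pi}_n$ coincides with the sequential parametrized endpoint map $\Pi_n\colon E_B^I\to E_B^n$ of \cite{Farber-Paul1}.

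Now suppose $\TC_n[p\colon E\to B]=k$, and take an ENR cover $E_B^n=U_1\cup\cdots\cup U_k$ with continuous sections $s_i$ of $\Pi_n$. Each $s_i$ takes values in $E_B^I=dE_B$, so it is a continuous section of $\vv{\Pi}_n$ over $U_i\subseteq\Gamma_{E,B}^n=E_B^n$. This gives $\vv{\TC}_n[p\colon E\to B]\leq k$.

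The one point needing care is a convention mismatch. The directed invariant is defined using pairwise disjoint ENRs, while the Farber--Paul invariant uses open covers (or, equivalently for ENRs, covers by locally compact sets). I would handle this the standard way. First, pass from an open cover $\{U_i\}$ to a disjoint ENR partition $F_i=U_i\setminus(U_1\cup\cdots\cup U_{i-1})$, each of which is a locally compact subset of an ENR and hence an ENR. Then restrict the sections to the $F_i$. This uses the standard equivalence of the two definitions of $\TC_n$ for ENR total spaces, as in \cite{Farber-Paul1} and \cite{C-F-W}, which I would cite rather than reprove. Everything else is formal once $dE_B=E_B^I$ is established.
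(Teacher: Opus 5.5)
Your argument is essentially the paper's: the lower bound comes from Proposition~\ref{comparision: directed and usual} via strong connectivity of the fibre, and the upper bound reads sections of $\Pi_n$ as sections of $\vv{\Pi}_n$ using $dE_B=E_B^I$ and $\Gamma_{E,B}^n=E_B^n$. Your constant-path argument for $dE_B=E_B^I$ is simply a more explicit version of the paper's appeal to the free-path structure on $F$.

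One caution on your side remark: a locally compact subset of an ENR need not be an ENR (a Cantor set $C\subseteq\mathbb{R}$ arises as $U_2\setminus U_1$ with $U_1=\mathbb{R}\setminus C$, $U_2=\mathbb{R}$). So the sets $U_i\setminus(U_1\cup\cdots\cup U_{i-1})$ are not automatically ENRs. Drop that justification and just cite the standard equivalence of the open-cover and ENR-partition definitions, which is exactly what the paper does implicitly.
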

\begin{proof}
Since \(F\) is strongly connected, Proposition~\ref{comparision: directed and usual} reduces the proof to showing that $\vv{\TC}_n[p\colon E\to B]\leq \TC_n[p\colon E\to B]$.

Suppose \(\TC_n[p\colon E\to B]=k\). Since \(E\) and \(B\) are ENRs, the space \(E_B^n\) admits a partition by ENRs \(\{U_1,\dots,U_k\}\) such that, for each \(i\), there exists a section $s_i\colon U_i\to E_B^I$ of the evaluation map $\Pi_n\colon E_B^I\to E_B^n$.

Let \((e_1,\dots,e_n)\in U_i\). Then $p(e_1)=\cdots=p(e_n)=b$ for some \(b\in B\). Hence $s_i(e_1,\dots,e_n)\in F_b^I$, where \(F_b=p^{-1}(b)\). Since the induced \(d\)-structure on \(F\) is given by the free path space, it follows that $s_i(e_1,\dots,e_n)\in dE_B$.
Consequently, each \(s_i\) may be regarded as a section of the directed evaluation map $\vv{\Pi}_n\colon dE_B\to \Gamma_{E,B}^n$.

Since \(F\) is strongly connected, Lemma~\ref{Gamma_B^n_for_SC} implies that \(\{U_1,\dots,U_k\}\) is also a partition of \(\Gamma_{E,B}^n\), and over each \(U_i\), the map \(s_i\) defines a section of \(\vv{\Pi}_n\). This completes the proof.
\end{proof}

Using Proposition~\ref{equality}, we now compute the sequential directed parametrized topological complexity of several $d$-fibrations.
%\begin{proof}
 %  Since \(F\) is strongly connected, Proposition~\ref{comparision: directed and usual} shows that it suffices to prove
%\[
%\vv{\TC}_n[p\colon E\to B]\leq \TC_n[p\colon E\to B].
%\]
%Suppose $\TC_n[p\colon E\to B]=k$. Then since $E$ and $B$ are ENRs, $E_B^n$ can be partitioned by ENRs $\{U_1,\dots,U_k\}$ such that over each $U_i$, there exists section $s_i$ of the evaluation map $\Pi_n\colon E_B^I\to E_B^n$. Note that for any $(e_1,\dots,e_n)\in E_B^n$, $p(e_1)=\dots=p(e_n)=b$ say. Then $s_i(e_1,\dots,e_n)\in F_b$. Since the induced $d$-structure on $F$ is given by free path space, $s_i(e_1,\dots,e_n)\in dE_B$. Since $F$ is strongly connected, it follows from Lemma~\ref{Gamma_B^n_for_SC} that $\Gamma_{E,B}^n$ partioned by $\{U_i\}_{i=1}^k$ such that over each $U_i$, $s_i$ is a section of $\vv{\Pi}_n$. This completes the proof.
%\end{proof}
%-----------------------------------------------------------------------------
\begin{example} \label{directed Fadell--Neuwirth}
 \normalfont{The problem of collision-free motion planning for multiple robots in the presence of obstacles with unknown locations was investigated in \cite{C-F-W,PTCcolfree}. This problem is naturally modeled by the \emph{Fadell--Neuwirth fibration}. Recall that, for a topological space $Y$, the configuration space of $n$ ordered distinct points in $Y$ is
\[
F(Y,n)=\{(x_1,\ldots,x_n)\in Y^n \mid x_i\neq x_j \text{ whenever } i\neq j\}.
\]
The Fadell--Neuwirth fibration is the projection
\[
p\colon F(\mathbb{R}^k,m+n)\longrightarrow F(\mathbb{R}^k,m),\qquad
p(z_1,\ldots,z_{m+n})=(z_1,\ldots,z_m).
\]
By equipping $\mathbb{R}^k$ with its standard directed structure \cite[p.~53]{MG2009}, the induced directed structures on the configuration spaces make the map $p$ into a d-fibration.
%Let $F(\mathbb{R}^k,n)$ denote the configuration space of $n$ ordered distinct points in $\mathbb{R}^k$. Consider the Fadell--Neuwirth fibration \[p\colon F(\mathbb{R}^k,m+n)\to F(\mathbb{R}^k,m), \qquadp(z_1,\dots,z_{m+n})=(z_1,\dots,z_m).\]

%We equip $\mathbb{R}^k$ with its standard directed structure as in \cite[Page 53]{grandis2009directed}, and endow the configuration spaces with the induced directed structures. With these choices, $p$ becomes a fibration in the category of directed spaces.

By Proposition~\ref{equality} together with \cite[Theorem 8.1]{Farber-Paul1} and \cite[Theorem 1.3]{FarberPaul2}, we obtain that for any $n \geq 1$ and $m \geq 2$,
\[
\vv{\TC}_n\!\big[p\colon F(\mathbb{R}^k,m+n)\to F(\mathbb{R}^k,m)\big]
=
\begin{cases}
nm + m & \text{if } k \geq 3 \text{ is odd},\\[4pt]
nm + m - 1 & \text{if } k \geq 2 \text{ is even}.
\end{cases}
\]}
\end{example}
\begin{remark}\label{rmk}
Let \(p\colon E\to B\) be a principal \(G\)-bundle, where \((B,dB)\) is a directed space, and let \((E,dE)\) be equipped with the directed structure defined above. Then Proposition~\ref{equality}, together with \cite[Proposition 3.3]{Farber-Paul1}, yields
\[
\vv{\TC}_n[p\colon E\to B]= \TC_n(G).
\]
\end{remark}
\begin{example} \label{Hopf fibration}
\normalfont{Let $p_1 \colon \mathbb{S}^3 \to \mathbb{S}^2$ and $p_2 \colon \mathbb{S}^7 \to \mathbb{S}^4$ denote the complex and quaternionic Hopf fibrations, which are principal $\mathbb{S}^1$- and $SU(2)$-bundles, respectively.

Equip each sphere $\mathbb{S}^k$ any directed structure. For instance one may take the directed structure on $\mathbb{S}^k$ induced from the directed cube $\vv{I}^k$ via the quotient map $\vv{I}^k \to \mathbb{S}^k = \vv{I}^k/\partial\vv{I}^k$ (see \cite[Definition 8]{EGMFAS2020}). 
Endowing $\mathbb{S}^3$ and $\mathbb{S}^7$ with these induced directed structures, Remark~\ref{rmk} together with the fact that $\TC_n(\mathbb{S}^1)=n$ and $\TC_n(\mathbb{S}^3)=n$ \cite{RUD2010}, yields:
\begin{enumerate}
    \item $\vv{\TC}_n[p_1 \colon \mathbb{S}^3 \to \mathbb{S}^2] = \TC_n(\mathbb{S}^1) = n$.
    \item $\vv{\TC}_n[p_2 \colon \mathbb{S}^7 \to \mathbb{S}^4] = \TC_n(SU(2)) = \TC_n(\mathbb{S}^3) = n$.
\end{enumerate}}
\end{example}
\begin{example}
\normalfont{Consider the Hopf fibration $p\colon \mathbb{S}^{2n+1}\longrightarrow \mathbb{C}P^n.$
Equip $\mathbb{C}P^n$ with any directed structure. In particular,
since $\mathbb{C}P^n$ can be realized as a directed mapping cone, \cite{grandis2002directed} provides a family of directed structures on $\mathbb{C}P^n$. By \cite[Definition~5.1]{SDNDAS2025}, the directed structure on $\mathbb{C}P^n$ induces a directed structure on $\mathbb{S}^{2n+1}$ for which $p$ is a directed map. Since $p$ is a principal $\mathbb{S}^1$-bundle and $\TC_n(\mathbb{S}^1)=n$ \cite{RUD2010}, it follows from Remark~\ref{rmk} that
$\vv{\TC}_n[p\colon \mathbb{S}^{2n+1}\to \mathbb{C}P^n]
=
\TC_n(\mathbb{S}^1)
=
n$.}
\end{example}

\subsection*{Acknowledgments.}
The authors would like to thank Prof. Mark Grant for his valuable insights on the computation of the sequential directed topological complexity of the directed circle. Navnath Daundkar gratefully acknowledges the support of DST–INSPIRE Faculty Fellowship (Faculty Registration No. IFA24-MA218), as well as Industrial Consultancy and Sponsored Research (IC\&SR), Indian Institute of Technology Madras for the New Faculty Initiation Grant (RF25261395MANFIG009294). Abhishek Sarkar expresses his deepest gratitude to the Indian Institute of Science Education and Research Pune for its support during his academic visit, where this work began. Ankur Sarkar gratefully acknowledges Prof. Sankaran Viswanath for providing him with a visiting position at The Institute of Mathematical Sciences, during which a major part of this work was carried out.
%%%%%%%%%%%%%%%%%%%%%%%%%%%%%%%%%%%%%%%%%%%%%%%%%%%%%%%%%%%%%%%%%%%%%%%%%
\bibliographystyle{plain} 
\bibliography{references}

\end{document}